\newtheorem{thm}{Theorem}[section]
\newtheorem{prop}[thm]{Proposition}
\newtheorem{lem}[thm]{Lemma}
\newtheorem{ass}[thm]{Assumption}
\newtheorem{cor}[thm]{Corollary}
\newtheorem{defn}[thm]{Definition}
\theoremstyle{definition}
\newtheorem{rem}[thm]{Remark}
\newtheorem{ex}[thm]{Example}
\newcommand{\fext}{g} 
\newcommand{\loc}{\operatorname{loc}}
\newcommand{\pv}{\mathrm{pv}}
\newcommand{\qtext}[1]{\quad \text{#1} \quad}
\newcommand{\qand}{\qtext{and}}
\newcommand{\sign}{\operatorname{sign}}
\newcommand{\supp}{\operatorname{supp}}
\newcommand{\weakto}{ \rightharpoonup }
\newcommand{\xto}[1]{\xrightarrow{#1}}
\newcommand{\e}{\varepsilon}
\newcommand{\N}{\mathbb N}
\newcommand{\R}{\mathbb R}
\newcommand{\V}{\mathbb V}
\newcommand{\Z}{\mathbb Z}
\newcommand{\bb}{\mathbf b}
\newcommand{\bx}{\mathbf x}
\newcommand{\cM}{\mathcal M}
\newcommand{\cN}{\mathcal N}
\newcommand{\cZ}{\mathcal Z}
\renewcommand{\o}{\overline}
\DeclareFontFamily{U}{mathx}{\hyphenchar\font45}
\DeclareFontShape{U}{mathx}{m}{n}{
      <5> <6> <7> <8> <9> <10>
      <10.95> <12> <14.4> <17.28> <20.74> <24.88>
      mathx10
      }{}
\DeclareSymbolFont{mathx}{U}{mathx}{m}{n}
\DeclareMathAccent{\widecheck}{0}{mathx}{"71}
\numberwithin{equation}{section}
\title{Discrete-to-continuum limits of interacting particle systems in one dimension  with collisions}
\author{Patrick van Meurs\footnote{
Faculty of Mathematics and Physics, Kanazawa University, Kakuma, Kanazawa 920-1192, Japan. 
e-mail: pjpvmeurs@staff.kanazawa-u.ac.jp
}}
\date{}
\long\def\comm#1{{\color{orange}#1}}
\long\def\comm#1{}
\begin{document}

\maketitle

\begin{abstract}
We study a class of interacting particle systems in which $n$ signed particles move on the real line. This class includes the overdamped dynamics (i.e.\ `velocity = force') of charged particles interacting by the Coulomb potential. Particles of opposite sign collide typically in finite time. Upon collision, pairs of colliding particles are removed from the system. 

In a recent paper by Peletier, Po\v{z}\'ar and the author, the specific case of the Coulomb potential was studied. Global well-posedness of the corresponding particle system was shown and a discrete-to-continuum limit (i.e.\ $n \to \infty$) to a nonlocal PDE for the signed particle density was established. Both results rely on innovative use of techniques in ODE theory and viscosity solutions.

The particle system with the Coulomb potential has a limited range of applications. The aim of the present paper is to broadly widen this range by adding an external potential and by considering a large class of interaction potentials which, moreover, may scale with $n$. Several key steps in the proofs for the Coulomb potential do not carry over; new techniques are developed to establish these steps.
\end{abstract}
{\textbf{Keywords}}: {Interacting particle systems, asymptotic analysis, viscosity solutions.}

\noindent{\textbf{MSC}}: {74H10, 35D40, 34E18.}



\noindent Declarations of interest: none.


\section{Introduction}
\label{s:intro}

The recent paper \cite{VanMeursPeletierPozar22} introduces several new techniques by which it is possible to describe the dynamics of $n$ signed particles interacting by the Coulomb potential, and to pass to the limit $n \to \infty$. Motivated by various applications, our aim is to further develop these techniques such that similar well-posedness and discrete-to-continuum limit results hold for many other potentials. After briefly revisiting the setting and main results of \cite{VanMeursPeletierPozar22} in Section \ref{s:intro:vMPP}, we introduce in Section \ref{s:intro:aim} the class of particle systems which we consider in this paper, and list in Section \ref{s:intro:lit} the applications.

\subsection{{\cite{VanMeursPeletierPozar22}}: charged particles and the limit $n \to \infty$}
\label{s:intro:vMPP}

In \cite{VanMeursPeletierPozar22} the starting point is the interacting particle system formally given by
   \begin{equation} \label{Pn:vMPP}
   \left\{ \begin{aligned}
     &\frac{dx_i}{dt} = \frac{1}{n} \sum_{ \substack{ j = 1 \\ j \neq i} }^n \frac{b_i b_j}{x_i - x_j}
     && t \in (0,T), \ i = 1,\ldots, n \\
     &+ \text{annihilation upon collision}
     &&
   \end{aligned} \right.
\end{equation}
subject to an initial condition,
where $n \geq 2$ is the number of particles, $\bx := (x_1, \ldots, x_n)$ are the particle positions in $\R$, and $\bb := (b_1, \ldots, b_n)$ are the particle charges in $\{-1,+1\}$. One can think of the particles as being electrically charged, i.e.\ particles repel or attract depending on their charges. The evolution can be though of as the overdamped limit of Newtonian point masses. 

Figure \ref{fig:trajs} illustrates the particle dynamics. The most intricate feature is that particle collision occur in finite time. At a collision the right-hand side in \eqref{Pn:vMPP} is not defined. In order to show how to overcome this with the `annihilation rule', it is instructive to consider first the case of $n=2$ particles with opposite charges. Then,
\begin{equation} \label{sol:n2:vMPP}
  x_1(t) = -\frac12 \sqrt{1 - 2t},
  \qquad 
  x_2(t) = \frac12 \sqrt{1 - 2t}
\end{equation}
is a solution to \eqref{Pn:vMPP} until the collision time $\tau = \frac12$. The trajectories form a parabola when drawn as in Figure \ref{fig:trajs}. Also for general $n$ it seems that the shape of trajectories shortly before collisions are approximately parabolic, but there is no sufficient evidence in \cite{VanMeursPeletierPozar22} to guarantee this. The main difficulty is that collisions are not restricted to $2$ particles; Figure \ref{fig:trajs} illustrates collisions between three particles, and it seems to be possible that any number of particles can collide with each other. 

Next we formally describe how collisions are resolved by the annihilation rule and how the particle dynamics are continued after collisions. Whenever a pair of particles with opposite charge collide, they are removed from the system. The remaining particles continue to evolve by the system of ODEs \eqref{Pn:vMPP}. The removal of pairs is done sequentially. As a consequence, not all colliding particles are necessarily removed; at any $3$-particle collision, for instance, one particle survives.

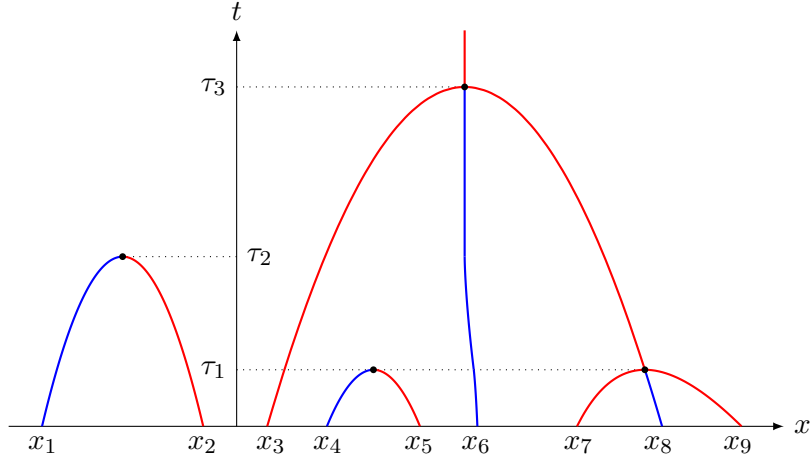
\begin{figure}[ht]
\centering
\begin{tikzpicture}[scale=1.5, >= latex]
\def \sqtwo {1.414}
\def \rr {0.03}

\draw[->] (0,0) -- (0,3.5) node[above] {$t$};
\draw[->] (-2,0) -- (4.8,0) node[right] {$x$};

\draw[dotted] (0,.5) node[left]{$\tau_1$} -- (3.581,.5);
\draw[dotted] (0,1.5) node[right]{$\tau_2$} -- (-1,1.5);
\draw[dotted] (0,3) node[left]{$\tau_3$} -- (2,3);

\draw (-1.7,0) node[below] {$x_1$};
\draw (-.3,0) node[below] {$x_2$};
\draw (.3,0) node[below] {$x_3$};
\draw (.8,0) node[below] {$x_4$};
\draw (1.6,0) node[below] {$x_5$};
\draw (2.1,0) node[below] {$x_6$};
\draw (3,0) node[below] {$x_7$};
\draw (3.7,0) node[below] {$x_8$};
\draw (4.4,0) node[below] {$x_9$};

\begin{scope}[shift={(2,3)},scale=1] 
    \draw[thick, red] (0,0) -- (0,.5);
    \draw[thick, blue] (0,-1.5) -- (0,0);
    \draw[domain=-1.732:1.581, smooth, thick, red] plot (\x,{-\x*\x});           
    \draw[domain=1.581:1.732, smooth, thick, blue] plot (\x,{-\x*\x});           
    \fill[black] (0,0) circle (\rr); 
\end{scope}

\begin{scope}[shift={(2,1.5)},scale=1, rotate = 270] 
    \draw[domain=0:1, smooth, thick, blue] plot (\x,{.08*\x*sqrt(\x)}); 
    \draw[domain=1:1.5, smooth, thick, blue] plot (\x,{.08*\x*sqrt(\x) - .1*(\x - 1)*sqrt(\x - 1)});         
\end{scope}

\begin{scope}[shift={(3.581,.5)},scale=1] 
    \draw[domain=-.707:.707, smooth, thick, red] plot ({2*exp(-\x/2)-2},{-\x*\x});
    \fill[black] (0,0) circle (\rr);  
\end{scope}

\begin{scope}[shift={(1.2,.5)},scale=1] 
    \draw[domain=0:.408, smooth, thick, red] plot (\x,{-3*\x*\x});
    \draw[domain=-.408:0, smooth, thick, blue] plot (\x,{-3*\x*\x});     
    \fill[black] (0,0) circle (\rr);     
\end{scope}

\begin{scope}[shift={(-1,1.5)},scale=1] 
    \draw[domain=-.707:0, smooth, thick, blue] plot (\x,{-3*\x*\x});  
    \draw[domain=0:.707, smooth, thick, red] plot (\x,{-3*\x*\x}); 
    \fill[black] (0,0) circle (\rr);         
\end{scope}

\end{tikzpicture} \\
\caption{A sketch of solution trajectories to \eqref{Pn}. Trajectories of particles with positive charge are colored red; those with negative charge blue. The black dots indicate the collision points.}
\label{fig:trajs}
\end{figure}

The two main results of \cite{VanMeursPeletierPozar22} are as follows. The first is the well-posedness of \eqref{Pn:vMPP}, i.e.\ the existence and uniqueness of solutions as well as stability with respect to perturbations of the initial condition. This well-posedness result is nontrivial for three reasons:
\begin{enumerate}
  \item at a collision time $\tau$, the right-hand side of the ODE blows up, and thus it is not clear why the limit $\lim_{t \uparrow \tau} x_i(t)$ exists,
  \item collisions between more than two particles are possible, and
  \item for the ODE system to be restarted after the annihilation rule is applied, it is necessary that no two particles are at the same location, as otherwise the right-hand side of the ODE is not defined.
\end{enumerate}
The key a priori estimate in \cite{VanMeursPeletierPozar22} to overcome these challenges states that the minimal distance between any neighboring particles of the same sign is increasing in time. As a consequence, any collection of colliding particles must have \textit{alternating} charges. This feature of the particle configuration gives sufficient control for overcoming the three challenges mentioned above.

The second main result of \cite{VanMeursPeletierPozar22} is the limit passage as $n \to \infty$. The limiting PDE is given formally by
\begin{equation} \label{P:vMPP}
  \partial_t \kappa = -\partial_x (|\kappa| H[\kappa] ),
  \qquad H[\kappa](x) := \pv \int_\R \frac{\kappa(y)}{x-y} \, dy,
\end{equation}
where $\kappa = \kappa(t,x)$ is the signed particle density. The connection with the ODE system is that the signed empirical measure
\begin{equation*} 
  \kappa_n := \frac1n \sum_{i=1}^n b_i \delta_{x_i}
\end{equation*}
converges to $\kappa$ as $n \to \infty$. In \eqref{P:vMPP} $H[\kappa]$ is the Hilbert transform; for $\kappa$ smooth enough it is simply the convolution with the Coulomb force (hence the principle-value integral). Alternatively, the Hilbert transform can be expressed in terms of the half-Laplacian as follows: 
\begin{equation} \label{HalfL}
  H[\kappa] = (- \Delta)^{\tfrac12} u, \qquad u := \int \kappa(x) \, dx.
\end{equation}
The appearance of the absolute value in the right-hand side in \eqref{P:vMPP} is not very common in PDEs. Its presence has two consequences:
\begin{enumerate}
  \item the positive charge density $\kappa_+$ and the negative charge density $\kappa_-$ (i.e.\ the positive and negative parts of $\kappa$) act in opposite direction to the velocity field $H[\kappa]$, and
  \item at points where $\supp \kappa_+$ and $\supp \kappa_-$ touch (i.e.\ where $\kappa$ changes sign), positive and negative charge may cancel out in time (i.e.\ $\int_{\R} |\kappa(t,x)| \, dx$ is typically decreasing in time).
\end{enumerate} 

The limit passage of the particle system as $n \to \infty$ was not obvious for two main reasons:
\begin{enumerate}
  \item the PDE is nonlocal and nonlinear, and contains the two nonsmooth components given by $H[\kappa]$ and  $|\kappa|$. It is not even obvious why this PDE should have a meaningful solution concept;
  \item the convergence of $\kappa_n$ to $\kappa$ is naturally done in the space of measures. However, the right-hand side of the PDE is not well-defined on this space.
\end{enumerate}
The first challenge was overcome in \cite{BilerKarchMonneau10}. There, it is shown that when integrating the PDE over space and working with the function $u = \int \kappa(x) \, dx$ instead of $\kappa$, the resulting PDE for $u$ satisfies a comparison principle. 
This allows one to work with viscosity solutions; a framework in which one can effectively replace $u$, which is possibly of low regularity, with regular test functions. \cite{BilerKarchMonneau10} uses this framework to establish existence, uniqueness and properties of solutions. Finally, the solution $\kappa$ to \eqref{P:vMPP} is then simply defined from the viscosity solution $u$ of the integrated equation as $\kappa := \partial_x u$.

Regarding the second challenge, in \cite{ForcadelImbertMonneau09} an attempt was made to prove the convergence of the ODE system by developing a similar notion of viscosity solutions for a `spatially integrated' version of the ODE system. Then, the limit passage can be carried out effectively with regular test functions for which the right-hand side of the PDE is well-defined. The authors succeeded (among other achievements) in proving the convergence as $n \to \infty$ in the \textit{single}-charge case (i.e.\ $b_i = 1$ for all $i$). In this case the particles do not collide. This allows for regularizating the Coulomb force. However, for the case of signed particles a regularized Coulomb force can cause complications such as the formation of persisting particle clusters which influence the dynamics of surrounding particles; see \cite[Chapter 9]{VanMeurs15} for simulations. Thirteen years later, this difficulty was overcome in \cite{VanMeursPeletierPozar22}. Instead of working with a regularization, the authors modify the notion of viscosity solution. The idea is technical; reducing the class of test functions at points where collisions may occur in order to handle the singularity at collisions.

\subsection{Setting and main results of the present paper}
\label{s:intro:aim}

In this paper, we consider a generalized version of \eqref{Pn:vMPP}. Formally, it is given by
\begin{align} \label{Pn} \tag{$P_n$}
  \left\{ \begin{aligned}
     &\frac{dx_i}{dt} 
       = - \frac1{n} \sum_{j \neq i} b_i b_j V_{\alpha_n}' (x_i - x_j) - b_i U'(x_i),
     && t \in (0,T), \ i = 1,\ldots, n \\
     &+ \text{annihilation upon collision,}
     &&
   \end{aligned} \right.
\end{align}
where  
\begin{equation} \label{Val}
  V_{\alpha_n}(x) := \alpha_n V(\alpha_n x)
\end{equation}
is a rescaling of an interaction potential $V$, $\alpha_n > 0$ is a parameter and $U$ is an external potential. 
The previous system \eqref{Pn:vMPP} is covered by \eqref{Pn} with the choices $V(x) = -\log|x|$, $U=0$ and $\alpha_n = 1$. Figure \ref{fig:VU} illustrates typical choices of $V$ and $U$ which we have in mind. 

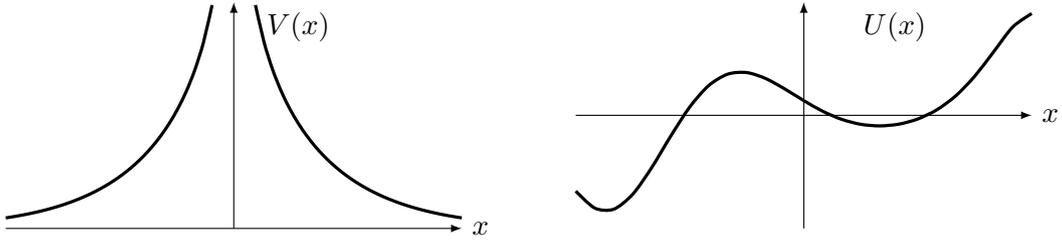
\begin{figure}[h]
\centering
\begin{tikzpicture}[scale=1.5, >= latex]    
\def \w {2}
        
\draw[->] (0,0) -- (0,\w);
\draw[->] (-\w,0) -- (\w,0) node[right] {$x$};
\draw[domain=0.19:\w, smooth, very thick] plot (\x,{ \x * cosh(\x) / sinh(\x) - ln( 2 * sinh(\x) ) });
\draw[domain=-\w:-0.19, smooth, very thick] plot (\x,{ \x * cosh(\x) / sinh(\x) - ln( -2 * sinh(\x) ) });
\draw (0.2, \w) node[anchor = north west]{$V(x)$};

\begin{scope}[shift={(2.5*\w, .5*\w)},scale=1]
\draw[->] (0,-.5*\w) -- (0, .5*\w);
\draw[->] (-\w,0) -- (\w,0) node[right] {$x$};
\draw[domain=-\w:\w, smooth, very thick] plot ( \x, { .5*sin(240*(\x - exp(.6*\x)) + 45) + .2*\x } );
\draw (.8, .5*\w) node[below]{$U(x)$};
\end{scope}
\end{tikzpicture} \\
\caption{Typical examples of $V$ and $U$.}
\label{fig:VU}
\end{figure}

The system \eqref{Pn} generalizes \eqref{Pn:vMPP} with the three new objects $U, V, \alpha_n$. We motivate each of them by applications in Section \ref{s:intro:lit}. 
We interpret $-U'$ as an external force field. We take $U \in C^1(\R)$ such that $U'$ is Lipschitz continuous. Note that $U$ need not be bounded; for instance, $U(x) = x^2$ meets the requirements. While it is common in particle systems to consider $U$ as a confining potential, this interpretation does not hold for \eqref{Pn}; any $U$ which confines the positive particles will disperse the negative particles, and vice versa. 

On $V$ we temporarily assume (for the sake of the introduction) that it is even, that it has a singularity at $0$, that $V' < 0 < V''$ on $(0,\infty)$, and that it is integrable on $\R$. The evenness assumption implies that the force which $x_i$ exerts on $x_j$ equals the negative of the force which $x_j$ exerts on $x_i$. The singularity at $0$ allows us, shortly before collision, to neglect any external or nonlocal forces. By $V' < 0$ on $(0,\infty)$ we have that particles of the same sign repel and particles of opposite sign attract. By $V'' > 0$ on $(0,\infty)$, the particle interaction force $|V'|$ decreases as the distance between the particles increases. We expect that this monotonicity prevents particles from clustering together without colliding. 
Finally, the integrability is needed for the scaling with $\alpha_n$.

The parameter $\alpha_n$ determines the part of the potential $V$ (the singularity, the tail, or both) which is relevant for the dynamics as $n$ gets large. The rescaling $V_{\alpha_n}$ is such that it conserves the integral, i.e.
\begin{equation*} 
  \int_\R V_{\alpha_n}(x) \, dx = \int_\R V(x) \, dx
  \qquad \text{for all } \alpha_n > 0.
\end{equation*}
We note that the scaling in \eqref{Pn} with respect to $n$ is such that the particle distances $|x_i - x_j|$ typically range from $O(\frac1n)$ for $|i-j| = O(1)$ to $O(1)$ for $|i-j| = O(n)$. Hence, when $\alpha_n = O(1)$ as $n \to \infty$, the relevant part of $V(x)$ is a neighborhood of size $O(1)$ around $x=0$, and thus the tail behavior of $V$ is of no importance. However, when $\alpha_n = O(n)$, then the singularity of $V$ is only briefly visited prior to collisions, but is otherwise irrelevant. If $1 \ll \alpha_n \ll n$, then both the singularity and the tail behavior of $V$ contribute to the dynamics. Indeed, in previous studies 
\cite{GeersPeerlingsPeletierScardia13,
VanMeursMuntean14,
vanMeurs18}
it has been shown that different asymptotic behavior of $\alpha_n$  leads to different limiting models for the particle density. Similarly to those studies we separate the following three cases as $n \to \infty$:
\begin{align} \label{alphanm}
  \begin{cases}
    \alpha_n \to \alpha
    &\text{if } m=1 \\
    1 \ll \alpha_n \ll n
    &\text{if } m=2 \\
    \frac{ \alpha_n } n \to \beta
    &\text{if } m=3.
  \end{cases} 
\end{align}
Here, $\alpha, \beta > 0$ are positive constants. We reserve the symbol $m$ as the label for separating these three cases. Note that by focussing on the cases in \eqref{alphanm} we leave out the regimes $\alpha_n \ll 1$ and $\alpha_n \gg n$. We do this for two reasons. First, these regimes require a detailed description of either the singularity or the tail of $V$. Second, we are not aware of literature on these regimes other than \cite{GeersPeerlingsPeletierScardia13}. Each of the three regimes in \eqref{alphanm}, however, appears in various studies; see Section \ref{s:intro:lit}. 

The two main questions in this paper are:
\begin{enumerate}
  \item For which $V$ is \eqref{Pn} well-posed? 
  \item For which $V$ does the limit passage of \eqref{Pn} hold? 
\end{enumerate}
When answering these questions, we strive for a proper balance between, on the one hand, keeping the assumptions on $V$ as weak as possible, and, on the other hand, keeping the assumptions simple and preventing the proofs from becoming too technical.

Our answers are given respectively by Theorems \ref{t:Pn} and \ref{t}, which are the main two results of this paper. The assumptions on $V$ for Theorems \ref{t:Pn} and \ref{t} are given by respectively Assumptions \ref{a:UV:fg} and \ref{a:UV}. In Section \ref{s:disc:weak:V} we discuss how these assumptions can be weakened. Here, we note that the assumptions on $V$ allow for a large class of singularities and tail behaviors. In fact, the well-posedness of \eqref{Pn} requires hardly any bound on the singularity or the tail. In particular, no integrability is required. This is interesting, because the strength of the singularity of $V$ is important for the shape of the particle trajectories prior to collision (see Example \ref{ex:Pn=2:expl} below); the parabolic shape observed in \cite{VanMeursPeletierPozar22} is characteristic only for a logarithmic singularity. The limit passage in Theorem \ref{t} does require bounds on the strength of the singularity and the fatness of the tails, but these bounds are hardly stronger than $V$ having to be integrable on $\R$.

Next we describe the limiting PDEs for the signed particle density $\kappa$. These PDEs are formally given by
\begin{align} \label{P1} \tag{$P^1$}
  \partial_t \kappa &= \partial_x (|\kappa| (V_\alpha' * \kappa) + |\kappa| U') && \text{if } m=1, \\\label{P2} \tag{$P^2$}
  \partial_t \kappa &= \partial_x \big( f_2(\kappa) \partial_x \kappa + |\kappa| U' \big) && \text{if } m=2, \\\label{P3} \tag{$P^3$}
  \partial_t \kappa &= \partial_x \big( f_3(\kappa) \partial_x \kappa + |\kappa| U' \big) && \text{if } m=3, 
\end{align}
where 
\begin{equation} \label{f23}
  f_2(\kappa) := \|V\|_{L^1(\R)} |\kappa|, \qquad
  f_3(\kappa) := \left\{ \begin{aligned}
    &\frac{\beta^3}{\kappa^2} \Psi \Big( \frac \beta \kappa \Big)
    &&\text{if } \kappa \neq 0  \\
    &0
    &&\text{if } \kappa = 0
  \end{aligned} \right.
\end{equation}
and  
\begin{equation} \label{Psi}
  \Psi (x) := \sum_{k=1}^\infty k^2 V''(kx)\
  \qquad \text{for all } x \in \R \setminus \{0\}. 
\end{equation}
Note that \eqref{P1} is a generalization of \eqref{P:vMPP} (`$*$' denotes the convolution on $\R$). Equations \eqref{P2} and \eqref{P3}, however, are quite different from \eqref{P1}: they are \textit{local} PDEs. Similar, but simpler, PDEs were  obtained in \cite{VanMeursMuntean14} for the case of single-sign particles (in this case the framework of Wasserstein gradient flows of convex energies applies). To see where the localness comes from, we recall that the scaling of $V$ by $\alpha_n$ preserves the integral. Moreover, if $\alpha_n \to \infty$, then $V_{\alpha_n} \weakto \|V\|_{L^1(\R)} \delta_0$ in the weak topology of measures. This shows that the nonlocal interactions vanish in the tails of $V$ as $n \to \infty$, and motivates the constant $\|V\|_{L^1(\R)}$ in $f_2$. The expression of $f_3$ is more complicated. Roughly speaking, in the case $m=3$, each $k$th-neighbor interaction in \eqref{Pn} is of size $O(1)$; its contribution in the limit $n \to \infty$ leads to the $k$th term in \eqref{Psi}. 

Our proof methods for Theorems \ref{t:Pn} and \ref{t} are inspired by those in \cite{VanMeursPeletierPozar22}: for Theorem \ref{t:Pn} we use a careful analysis of the singularities of the ODE system and for Theorem \ref{t} we rely on a modified notion of viscosity solutions. At several steps in the proof, however, we require a completely different approach from that in \cite{VanMeursPeletierPozar22}. We explain this difference after introducing the mathematical setting: see the text below Corollary \ref{c:Pn} and the text below Theorem \ref{t:conv}.

\subsection{Applications and literature}
\label{s:intro:lit}

In \cite{VanMeursPeletierPozar22} three applications are given for establishing well-posedness and the limit $n \to \infty$ of \eqref{Pn:vMPP}: systems of vortices (see \cite{Serfaty07II,SmetsBethuelOrlandi07}), arrays of dislocations \cite{ForcadelImbertMonneau09,GarroniVanMeursPeletierScardia19,HeadLouat55,VanMeursMorandotti19}, and in general a contribution to interacting particle systems with multiple species; see e.g.\ 
\cite{BerendsenBurgerPietschmann17,
DiFrancescoEspositoSchmidtchen20ArXiv,
DiFrancescoFagioli13,
DiFrancescoFagioli16,
Zinsl16}. Obviously, with our generalization \eqref{Pn} of \eqref{Pn:vMPP} we aim to contribute to the third motivation. In addition, the freedom in the choice of $V,U,\alpha_n$ provides various new important applications. Here we mention several of these. In Section \ref{s:disc} we show that Theorems \ref{t:Pn} and \ref{t} capture almost all of these applications. 

\paragraph{Dislocation structures.} Dislocations are defects in the atomic lattice of metals. The textbooks \cite{HirthLothe82,HullBacon11} treat dislocations as the main topic. Here, we provide a brief description. Under mechanical loading, dislocations can move through the atomic lattice. If they do so in large quantities, then the metal deforms plastically. A long-standing open question is to understand precisely how plasticity on a macroscopic scale arises from the microscopic dynamics of dislocations. Under idealistic modeling assumptions, dislocation dynamics can be described in a one-dimensional setting by \eqref{Pn:vMPP}, where $x_i$ is the position of a single dislocation and $b_i$ is its orientation. The limit passage $n \to \infty$ describes the connection between the micro- and macroscopic scale. Hence, the results in \cite{VanMeursPeletierPozar22} have contributed to the understanding of plasticity. 
  
Other than single dislocations, the points $x_i$ may also represent dislocation \textit{structures} such as dipoles (see \cite{HallChapmanOckendon10,ChapmanXiangZhu15}) or dislocation walls (i.e.\ vertically periodic arrays of dislocations; see \cite{RoyPeerlingsGeersKasyanyuk08,GeersPeerlingsPeletierScardia13,VanMeursMuntean14,vanMeurs18}). Both settings require the generalization of \eqref{Pn:vMPP} to \eqref{Pn}: in the case of dipoles, $V(x) = 1/x^2$, and in the case of dislocation walls,
\begin{equation} \label{Vwall}
  V(x) = x \coth x - \log |2 \sinh x|,  
\end{equation}
which has a logarithmic singularity at $0$ and exponentially decaying tails. In the literature to date on these dislocation structures the setting is restricted to avoid collisions. The results in the present paper lift these restrictions, which contributes towards understanding plasticity. 

\paragraph{Riesz potentials.} The (extended) Riesz potential with parameter $a > -1$ is given by 
\begin{equation} \label{VR}
  V(x) =
  \left\{ \begin{aligned}
    &-|x|^{-a}
    && -1 < a < 0 \\
    & -\log|x|
    && a=0 \\
    & |x|^{-a}
    && 0 < a.
  \end{aligned} \right.
\end{equation}
Here, we have extended the usual range of $a$ from $(0,1)$ to $(-1,\infty)$. The choice of this extension to negative $a$ is natural from the observation that 
\[
  -V'(x) = c_a \frac{ \sign(x) }{|x|^{1+a}} 
\]  
for some explicit constant $c_a > 0$. For $a \in (-1,1)$ Riesz potentials are connected with the fractional Laplacian:
  \[
    V' * \kappa = -C_a (-\Delta)^{\tfrac{1+a}2} u,
  \]
where $C_a > 0$ is a certain constant and $u = \int \kappa(x) \, dx$ as in \eqref{HalfL}. For applications of particle systems such as \eqref{Pn} with Riesz potentials to physics and approximation theory we refer to \cite{DauxoisRuffoArimondoWilkens02,BorodachovHardinSaff19}. 
   
  In the mathematics literature the Riesz potential appears in the context of \eqref{Pn} or \eqref{P1} in the following papers: 
\begin{itemize} 
    \item in \cite{BilerKarchMonneau10}, for any $a \in (-1,1)$ a viscosity solution concept for \eqref{P1} is built,
    \item in \cite{Duerinckx16,NguyenRosenzweigSerfaty21,DuerinckxSerfaty20}, for any $a \in [0,1)$  the limit $n \to \infty$ of \eqref{Pn} is established also in higher dimensions, but for the single-sign case,
    \item in the paper series by Patrizi, Valdinoci et al.\ (see e.g.\ \cite{PatriziValdinoci15,
  PatriziValdinoci16,
  PatriziValdinoci17,
  PatriziSangsawang23,
  VanMeursPatrizi22ArXiv}), for any $a \in (-1,1)$ a regularization of \eqref{Pn} is considered in the form of a phase-field model. 
\end{itemize}  
The third setting is closely related to that in this paper. In that setting the regularization of \eqref{Pn} is constructed by smearing out the particles (considered as point masses) over a small distance $\delta > 0$ and then by integrating the resulting density in space, similarly to the manner in which $u$ is obtained from $\kappa$ in \eqref{HalfL}. This results in a phase-field model in which phase transitions (up or down) correspond to (positive or negative) particles. For $a = 0$ the phase-field model is known in physics as the Peierls-Nabarro model for dislocations. Together with the well-posedness results in \cite{VanMeursPeletierPozar22} for the special case $a=0$ it became possible to connect the Peierls-Nabarro model to both \eqref{Pn:vMPP} (see \cite{VanMeursPatrizi22ArXiv}) and to \eqref{P:vMPP} (see \cite{PatriziSangsawang23}) by means of proving limit passages as $n \to \infty$ or $\delta \to 0$. We expect that the well-posedness result in the present paper (see Theorem \ref{t:Pn}) is of equal importance for the extension of these connections to any $a \in (-1,1)$. 
    
\paragraph{The external potential $U$.} We have added the potential $U$ to \eqref{Pn} for two reasons. First, with the term $U$ we can capture more phenomena, such as a driving force which pushes a group of positive particles and a group of negative particles in opposite directions. In particular, in the application to dislocations, $U$ can capture an external loading applied to the metal. Such a term appears in several of the papers on dislocations mentioned above. Second, in the paper series by Patrizi, Valdinoci et al.\ cited above, the key method of proof is the construction of viscosity sub- and supersolutions to the phase-field equation. This construction relies on a perturbed version of \eqref{Pn:vMPP} in which not only the initial particle positions are perturbed, but also an additional, small potential $U(x) = \delta x$ is added. Our well-posedness result on \eqref{Pn} (see Theorem \ref{t:Pn}) shows that this perturbed particle system converges to the unperturbed system \eqref{Pn} as the perturbation parameter $\delta $ tends to $0$. The convergence is uniform in time across collisions, which provides a stronger tool for constructing the viscosity solutions.
 
\paragraph{The parameter $\alpha_n$.} The case $m=1$ in which $\alpha_n \to \alpha > 0$ (often simply $\alpha_n = \alpha = 1$) is most common in the literature on particle systems; the corresponding limit $n \to \infty$ is called the mean-field limit. However, the case $\alpha_n \to \infty$ also appears in various studies. We mention three of these.

First, for the application to dislocation walls (see \eqref{Vwall}) we show in Section \ref{s:disc} that Theorem \ref{t} guarantees the convergence of \eqref{Pn} to ($P^m$) in all three scaling regimes in \eqref{alphanm}. This extends the results in  \cite{VanMeursMuntean14} where a similar limit passage has been established for single-sign particles. 

Second, \cite{Oelschlager90} derives the limit $n \to \infty$ of \eqref{Pn} in the cases $m=2,3$ in higher spatial dimensions, but only in the single-sign case and for nonsingular $V$. Within the one-dimensional setting, the present paper is a large extension of this result. 

Third, the case $m=3$ coincides with the setting in studies on arrays of atoms; see e.g.\ 
\cite{BraidesGelli04,
BlancLeBrisLions07,
HallHudsonVanMeurs18,
Hudson13,
HudsonOrtner14,
SchaeffnerSchloemerkemper18} for several of such studies.

\subsection{Organization of the paper}
\label{s:intro:org}

In Section \ref{s:ODE} we treat the well-posedness of \eqref{Pn} (Theorem \ref{t:Pn}) and in Section \ref{s:t} we establish the convergence of \eqref{Pn} to each of the three PDEs \eqref{P1}, \eqref{P2} or \eqref{P3} (Theorem \ref{t}). Section \ref{s:disc} contains a discussion of these two theorems. Sections \ref{s:HJ} and \ref{s:conv} contain the framework and the essence of the convergence statement in Theorem \ref{t}. In Section \ref{s:HJ} we provide a proper meaning to the integrated versions of the PDEs \eqref{P1}, \eqref{P2} and \eqref{P3} in terms of viscosity solutions. We also recast an integrated version of \eqref{Pn} in the framework of viscosity solutions such that the convergence statement in Theorem \ref{t} can be translated to this framework. This translated convergence statement is given by Theorem \ref{t:conv} and proven in Section \ref{s:conv}.

\newpage 

\section{Well-posedness of the ODE system \eqref{Pn} }
\label{s:ODE}

In this section we state and prove Theorem \ref{t:Pn} on the well-posedness of the particle system \eqref{Pn} for fixed $n \geq 2$. Before introducing \eqref{Pn} rigorously, we make several a priori observations:
\begin{itemize}
  \item As mentioned before, since $n$ is fixed, $\alpha_n > 0$ is a fixed constant;
  \item In between collisions, the system of ODEs in \eqref{Pn} is well-defined. In fact, it is the gradient flow of the energy
  \begin{equation*} 
    E(\bx; \bb) = \frac1{n^2} \sum_{i>j} b_i b_j V_{\alpha_n} (x_i - x_j) + \frac1n \sum_i b_i U(x_i). 
    \end{equation*}
    However, this gradient flow structure is of limited use. Indeed, if $V$ is singular at $0$, then along a solution $\bx(t)$ of \eqref{Pn} the energy diverges to $-\infty$ as $t$ approaches a time at which two particles of opposite sign collide;
  \item Since the variational framework is of limited use, we rewrite \eqref{Pn} in terms of the forces
\[
  f := -V_{\alpha_n}' 
  \qand 
  \fext := -U'.
\]
This yields
\begin{align} \label{Pn:ito:forces} \tag{$P_n'$}
  \frac d{dt} x_i 
  = \frac1{n} \sum_{j \neq i} b_i b_j f (x_i - x_j) + b_i \fext(x_i);
\end{align}
  \item \eqref{Pn:ito:forces} is invariant under relabeling the indices of particles with the same sign; 
  \item If $g = 0$, then \eqref{Pn:ito:forces} is invariant under swapping all signs (i.e.\ replacing $b_i$ with $-b_i$ for all $i$). We will often use this to fix the sign of one particle;
  \item The net charge $\sum_{i=1}^n b_i$ of the surviving particles is conserved under \eqref{Pn:ito:forces}.
\end{itemize} 
In addition, it is instructive to consider the case $n=2$ as we did in \eqref{sol:n2:vMPP}:

\begin{ex}[Derivative of the Riesz potential] \label{ex:Pn=2:expl}
Consider the interaction force given by $f(x) = \frac1{2+a} \sign (x) |x|^{-1-a}$ for some $a \in (-1, \infty)$. Take $g = 0$, $n=2$ and $b_1 b_2 = -1$. Then, the solution to \eqref{Pn} is given by
\begin{align} \notag
  x_2(t) + x_1(t) 
  &= x_2(0) + x_1(0), \\ \label{power:law}
  x_2(t) - x_1(t) 
  &= (c_0 - t)^{\tfrac1{2+a}}, \qquad c_0 := (x_2(0) - x_1(0))^{2+a}.
\end{align}
\end{ex}

For general $n$ we do not have sufficient evidence to state that the shape of the particle trajectories close to collision are similar to the power law in Example \ref{ex:Pn=2:expl} for any $f$ with a power-law type singularity. Nevertheless, we show in Theorem \ref{t:Pn} that several such trajectories can be bounded, at least from one side, by the power law in \eqref{power:law}.

Let us introduce some notation:
\begin{itemize}
  \item For a function $f$ of one variable, we set $f(t-) := \lim_{s \uparrow t} f(s)$ and $f(t+) := \lim_{s \downarrow t} f(s)$ as respectively the left and right limit;
  \item $a \vee b := \max \{a,b\}$ and $a \wedge b := \min \{a,b\}$;
  
  \item We reserve $C, c > 0$ to denote generic positive constants which do not depend on the relevant parameters and variables. We think of $C$ as possibly large and $c$ as possibly small. The values of $C,c$ may change from line to line, but when they appear multiple times in the same display their values remain the same. When more than one generic constant appears in the same display, we use $C', C'', c'$ etc.\ to distinguish them. When we need to keep track of certain constants, we use $C_0, C_1$ etc.
\end{itemize}

Next we introduce the assumptions on $g$ and $f$:
\begin{ass}[$g$ and $f$] \label{a:fg}
$g : \R \to \R$ is Lipschitz continuous. $f$ satisfies
\begin{enumerate}[label=(\roman*)]
  \item \label{a:fg:odd} $f : \R \setminus \{0\} \to \R$ is odd;
  \item \label{a:fg:reg} $f \in C^2((0,\infty))$;
  \item \label{a:fg:mon} (monotonicity). $f \geq 0$, $f' \leq 0$ and $f'' \geq 0$ on $(0,\infty)$;
  \item \label{a:fg:singLB} (singularity lower bound). $x f'(x) \to -\infty$ as $x \downarrow 0$;
  \item \label{a:fg:singUB} (singularity upper bound). There exists $a \in (-1,\infty)$ and $C > 0$ such that $f(x) \leq \dfrac C{x^{1+a}}$ for all $x \in (0, 1)$.
\end{enumerate} 
\end{ass} 

For future reference, we translate Assumption \ref{a:fg} in terms of $U$ and $V$:

\begin{ass}[$U$ and $V$] \label{a:UV:fg}
$U \in C^1(\R)$ is such that $U'$ is Lipschitz continuous. $V$ satisfies:
\begin{enumerate}[label=(\roman*)]
  \item $V : \R \setminus \{0\} \to \R$ is even;
  \item $V \in C^3((0,\infty))$;
  \item (monotonicity). $V' \leq 0$, $V'' \geq 0$ and $V''' \leq 0$ on $(0,\infty)$;
  \item (singularity lower bound). $x V''(x) \to \infty$ as $x \downarrow 0$;
  \item (singularity upper bound). There exists $a \in (-1,\infty)$ and $C > 0$ such that $|V'(x)| \leq \dfrac{C}{x^{1+a}}$ for all $x \in (0, 1)$.
\end{enumerate}
\end{ass} 

In the remainder of this section we work solely with Assumption \ref{a:fg}. First, we mention several consequences of Assumption \ref{a:fg}. From \ref{a:fg:mon} and \ref{a:fg:singLB} we obtain that $f(0+) = \infty$ and that $f$ and $f'$ are bounded on $[\delta,\infty)$ for any $\delta > 0$. Hence, in \ref{a:fg:singUB} the range of $x$ away from $0$ is of little importance. \comm{p.107 top}  

Next we motivate Assumption \ref{a:fg}. By the Lipschitz continuity of $f$ and $g$, \eqref{Pn:ito:forces} has a unique solution up to the first time when particles get arbitrarily close. We will use $f(0+) = \infty$ to show that the interaction forces between particles close to collision outweigh the contributions of $g$ and the other particles. The upper bound on the singularity in \ref{a:fg:singUB} is actually not necessary for existence and uniqueness of solutions; we impose it to get several explicit estimates on the solutions in terms of the exponent $a$.

The monotonicity assumption goes one step further than convexity of $U$. To motivate it, we first note from  \eqref{Pn:ito:forces} that the force field exerted by a particle $x_j$ on another particle $x_i > x_j$ is
\[
  \frac{b_i b_j}n f(x_i - x_j).
\]
The zeroth-order monotonicity (i.e.\ $f \geq 0$ on $(0,\infty)$) implies that the sign of this force does not depend on the distance between $x_j$ and $x_i$. An important consequence of the first-order monotonicity (i.e.\ $f' \leq 0$ on $(0,\infty)$) is illustrated in Figure \ref{fig:dip} (left): the force exerted by the negative particle tends to increase the distance between two positive particles on the right, independently of the distances between the particles. Finally, by the second-order monotonicity (i.e.\ $f'' \geq 0$ on $(0,\infty)$) this observation also holds when the negative particle is replaced by a dipole; see Figure \ref{fig:dip} (right) and Lemma \ref{l:f2}.

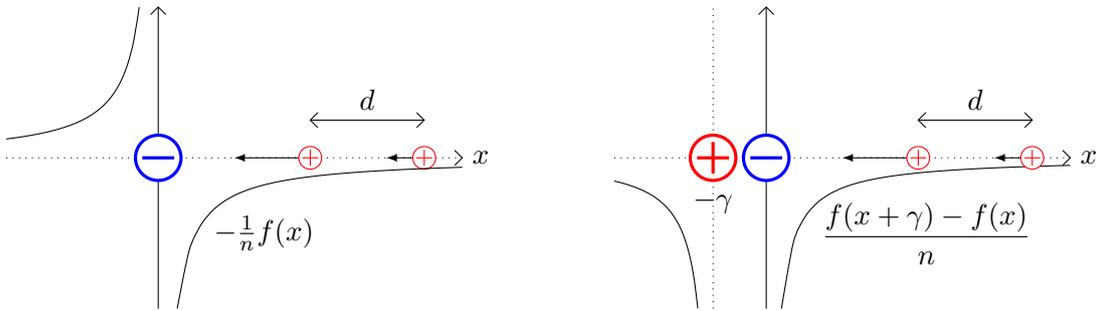
\begin{figure}[ht]
\centering
\begin{tikzpicture}[scale=1, >= latex]
\def \sqtwo {1.414}
\def \rr {0.15} 
\def \a {0.1};
\def \r {0.3};

\draw (0,-2) -- (0,2); 
\draw (-\a, 2-\a) -- (0,2) -- (\a, 2-\a);
\draw[dotted] (-2,0) -- (4,0) node[right] {$x$};
\draw (4-\a, -\a) -- (4,0) -- (4-\a, \a);

\filldraw[draw=blue, very thick, fill=white] (0, 0) circle (\r); 
\draw[blue, very thick] (-.7*\r, 0) -- (.7*\r, 0);

\draw (2,.5) --  (3.5,.5) node[midway, above]{$d$};
\draw (3.5-\a, .5-\a) -- (3.5,.5) -- (3.5-\a, .5+\a);
\draw (2+\a, .5-\a) -- (2,.5) -- (2+\a, .5+\a);

\begin{scope}[shift={(2,0)},scale=1] 
\draw[<-] (-1,0) -- (0,0);
\filldraw[draw=red, fill=white] (0, 0) circle (\rr); 
\draw[red] (-.7*\rr, 0) -- (.7*\rr, 0);
\draw[red] (0, -.7*\rr) -- (0, .7*\rr);
\end{scope}

\begin{scope}[shift={(3.5,0)},scale=1] 
\draw[<-] (-.5,0) -- (0,0);
\filldraw[draw=red, fill=white] (0, 0) circle (\rr); 
\draw[red] (-.7*\rr, 0) -- (.7*\rr, 0);
\draw[red] (0, -.7*\rr) -- (0, .7*\rr);
\end{scope}

\draw[domain=.25:4, smooth] plot (\x,{-.5/\x});
\draw[domain=-2:-.25, smooth] plot (\x,{-.5/\x});
\draw (.6,-1) node[right] {$-\frac1nf(x)$};

\begin{scope}[shift={(8,0)},scale=1] 
\draw (0,-2) -- (0,2); 
\draw[dotted] (-.7,-2) -- (-.7,2);
\draw (-\a, 2-\a) -- (0,2) -- (\a, 2-\a);
\draw[dotted] (-2,0) -- (4,0) node[right] {$x$};
\draw (4-\a, -\a) -- (4,0) -- (4-\a, \a);

\draw[domain=.2:4, smooth] plot (\x,{-.4/\x});
\draw[domain=-2:-.9, smooth] plot (\x,{.4/(\x + .7)});
\draw (.6,-1) node[right] {$\dfrac{ f(x+\gamma) - f(x) }n$};

\filldraw[draw=blue, very thick, fill=white] (0, 0) circle (\r); 
\draw[blue, very thick] (-.7*\r, 0) -- (.7*\r, 0);

\begin{scope}[shift={(-.7,0)},scale=1] 
\filldraw[draw=red, very thick, fill=white] (0, 0) circle (\r); 
\draw[red, very thick] (-.7*\r, 0) -- (.7*\r, 0);
\draw[red, very thick] (0, -.7*\r) -- (0, .7*\r);
\draw (0,-\r) node[below]{$-\gamma$};
\end{scope}

\draw (2,.5) --  (3.5,.5) node[midway, above]{$d$};
\draw (3.5-\a, .5-\a) -- (3.5,.5) -- (3.5-\a, .5+\a);
\draw (2+\a, .5-\a) -- (2,.5) -- (2+\a, .5+\a);

\begin{scope}[shift={(2,0)},scale=1] 
\draw[<-] (-1,0) -- (0,0);
\filldraw[draw=red, fill=white] (0, 0) circle (\rr); 
\draw[red] (-.7*\rr, 0) -- (.7*\rr, 0);
\draw[red] (0, -.7*\rr) -- (0, .7*\rr);
\end{scope}

\begin{scope}[shift={(3.5,0)},scale=1] 
\draw[<-] (-.5,0) -- (0,0);
\filldraw[draw=red, fill=white] (0, 0) circle (\rr); 
\draw[red] (-.7*\rr, 0) -- (.7*\rr, 0);
\draw[red] (0, -.7*\rr) -- (0, .7*\rr);
\end{scope}
\end{scope}
\end{tikzpicture} \\
\caption{Left: the graph sketches the force field (given by $-\frac1n f(x)$) that a negative particle at $x=0$ exerts on $\R$. The effect of this force field is further illustrated by force vectors acting on two positive particles in the vicinity. Right: similar as the left figure, but now for a dipole (with force field $\frac1n f(x+\gamma) - \frac1n f(x)$). Both figures: the  force field yields a positive contribution to $\frac d{dt} d$. }
\label{fig:dip}
\end{figure}

\begin{lem} \label{l:f2}
Let $f$ satisfy Assumption \ref{a:fg}. For all $\gamma > 0$, the function $x \mapsto f(x+\gamma) - f(x)$ is nondecreasing on $(0,\infty)$.
\end{lem}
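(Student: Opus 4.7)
The statement is essentially a direct consequence of convexity, so the plan is short. Set $h(x) := f(x+\gamma) - f(x)$ for $x > 0$. The goal is to show $h'(x) \geq 0$ a.e.\ on $(0,\infty)$, from which the monotonicity of $h$ follows by the absolute continuity guaranteed by the regularity assumption on $f$.

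First, by Assumption~\ref{a:fg}\ref{a:fg:reg}, $f \in W^{2,1}_{\loc}(0,\infty)$, so $f$ is $C^1$ on $(0,\infty)$ and $f'$ is absolutely continuous on any compact subinterval of $(0,\infty)$. Consequently, $h$ is absolutely continuous on compact subsets of $(0,\infty)$, and the fundamental theorem of calculus applies with the almost everywhere derivative $h'(x) = f'(x+\gamma) - f'(x)$.

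Second, by Assumption~\ref{a:fg}\ref{a:fg:mon}, $f'' \geq 0$ a.e.\ on $(0,\infty)$. Since $f'$ is absolutely continuous and its weak derivative is nonnegative, $f'$ is nondecreasing on $(0,\infty)$. Therefore, for any $x > 0$ and $\gamma > 0$, we have $f'(x+\gamma) \geq f'(x)$, which yields $h'(x) \geq 0$ a.e.

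Finally, the absolute continuity of $h$ together with $h' \geq 0$ a.e.\ implies $h$ is nondecreasing on $(0,\infty)$. The only potential obstacle, and it is a mild one, is keeping the regularity bookkeeping correct: one must verify that the convexity of $f$ away from zero used here is genuinely the a.e.\ statement $f'' \geq 0$ on $(0,\infty)$, rather than a pointwise hypothesis, but this is precisely what Assumption~\ref{a:fg}\ref{a:fg:mon} provides in the Sobolev sense.
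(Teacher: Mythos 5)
Your proof is correct and rests on exactly the same ingredients as the paper's: local $W^{2,1}$ regularity to justify the fundamental theorem of calculus, and $f''\geq 0$ to get monotonicity of the increment. The paper merely packages the argument as a single double integral, $h(y)-h(x)=\int_0^\gamma\int_{x+z}^{y+z}f''(\xi)\,d\xi\,dz\geq 0$, rather than passing through $h'\geq 0$ a.e., so the two arguments are essentially identical.
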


\begin{proof}
For any $y > x > 0$, we have by $f'' \geq 0$ on $(0,\infty)$ that
\begin{align*}
  \big( f(y+\gamma) - f(y) \big)
  - \big( f(x+\gamma) - f(x) \big)
  &= \int_0^{\gamma} f'(y + z) \, dz
    - \int_0^{\gamma} f'(x + z) \, dz \\
  &= \int_0^{\gamma} \int_{x+z}^{y+z} f''(\xi) \, d\xi dz
  \geq 0.
\end{align*}
\end{proof}

Next we construct a rigorous solution concept for \eqref{Pn} which allows for collisions. First we encode the removal of particles. There are several choices on how to do this. We follow the approach in \cite{VanMeursPeletierPozar22} to switch $b_i$ from $\pm 1$ to $0$ at the time when $x_i$ gets annihilated. Consequently, we consider $\bb$ as a time-dependent unknown. We call a particle $x_i$ at time $t$ \textit{charged} if $|b_i(t)| = 1$ and \textit{neutral} if $b_i(t) = 0$. We call two particles $i < j$ \emph{neighbors}  if they are charged and any particle in between them is neutral, i.e.\ 
\[ b_i(t) b_j(t) \neq 0
\quad \text{and} \quad 
\forall \, i < k < j : b_k(t) = 0. \]
Note that for neutral particles, the right-hand side in \eqref{Pn:ito:forces} vanishes (we employ the convention to define $f(0) := 0$), and thus they remain stationary after collision. In addition, note that the presence of neutral particles does not affect the force exerted on the charged particles. Finally, we do not consider any collision rule when a charged particle happens to move across a neutral particle.

Next we recall from \cite{VanMeursPeletierPozar22} the state space $\cZ_n$ for the solution pair $(\bx, \bb)$. Let the particles initially be numbered from left to right. For charged particles the dynamics preserves the numbering; this is a consequence of the annihilation rule. When adding the neutral particles, however, there is no reason for the numbering to be preserved. This motivates the definition
\begin{equation} \label{Zn}
  \cZ_n := \{ (\bx, \bb) \in \R^n \times \{-1,0,+1\}^n : \text{if } i < j \text{ and } b_i b_j \neq 0, \text{ then } x_i < x_j \}.
\end{equation}
Note that for any $(\bx, \bb) \in \cZ_n$ no two charged particles can be at the same position.

The solution concept to \eqref{Pn} is the same as in \cite{VanMeursPeletierPozar22}. It is as follows:

\begin{defn}[Solution to $(P_n)$] \label{d:Pn}
Let $n \geq 2$, $T > 0$ and $(\bx^\circ, \bb^\circ) \in \cZ_n$. $(\bx, \bb) : [0,T] \to \cZ_n$ is a solution to $(P_n)$ if there exists a finite set $S \subset (0,T]$ such that
\begin{enumerate}[label=(\roman*)]
  \item (Regularity) $\bx \in C([0,T]) \cap C^1((0,T) \setminus S)$, and $b_1, \ldots, b_n : [0,T] \to \{-1, 0, 1\}$ are right-continuous; 
  \item (Initial condition) $(\bx(0), \bb(0)) = (\bx^\circ, \bb^\circ)$;
  \item (Annihilation rule) \label{d:Pn:ann:rule} Each $b_i$ jumps at most once. If $b_i$ jumps at $t \in [0,T]$, then $t \in S$, $|b_i(t-)| = 1$ and $b_i(t) = 0$. Moreover,
  \begin{equation} \label{annih:rule}
    \sum_{i: x_i(t) = y} b_i (t) = \sum_{i: x_i(t) = y} b_i(t-)
    \qquad \text{for all } t \in (0,T] \text{ and all } y \in \R;
  \end{equation}
  \item (ODE of $\bx$) On $(0,T) \setminus S$, $\bx$ satisfies the ODE in \eqref{Pn}.
\end{enumerate}
\end{defn}

We take a moment to parse Definition \ref{d:Pn}. We call a point $(\tau, y) \in S \times \R$ a  \emph{collision point} if the second sum in \eqref{annih:rule} contains at least two non-zero summands. We call the time $\tau$ of a collision point a \emph{collision time}. The set of all collision times $\{ \tau_1, \ldots, \tau_K \}$ is finite, where $K \leq n^+ \wedge n^-$ and $n^\pm$ are the numbers of positive/negative particles at time $0$ as determined by $\bb^\circ$. From Theorem \ref{t:Pn} it will turn out that the minimal choice for $S$ is $\{ \tau_1, \ldots, \tau_K \}$.

In Definition \ref{d:Pn} annihilation is encoded by the combination of the annihilation rule in \ref{d:Pn:ann:rule} and the requirement that $(\bx(t), \bb(t)) \in \cZ_n$. Indeed, Definition \ref{d:Pn}\ref{d:Pn:ann:rule} limits the choice of jump points for $b_i$, while the separation of particles implied by $(\bx(t), \bb(t)) \in \cZ_n$ requires particles to annihilate upon collision.

\begin{rem}[Uniqueness modulo relabeling]
\label{r:unique-modulo-renumbering}
As we shall see in Theorem~\ref{t:Pn} below, solutions according to Definition~\ref{d:Pn} are unique, but only  up to relabeling of particle indices. The nonuniqueness of the labeling can easily be seen from Figure \ref{fig:trajs}. In that figure, at the three-particle collision between $x_7$, $x_8$ and $x_9$, there is a choice for the surviving particle to be $x_7$ or $x_9$. Either choice will lead to a different solution in terms of Definition \ref{d:Pn}. However, for either choice the union of trajectories $x_i$ is the same.  
\end{rem}

\medskip
  
In preparation for stating Theorem \ref{t:Pn} on the well-posedness of \eqref{Pn} and properties of its solution, we introduce $d^\pm$. Given $(\bx, \bb) \in \cZ_n$, we set $d^+$ as the smallest distance between any two neighboring positive particles. Analogously, we define $d^-$ for the negative particles. Note that
\begin{equation} \label{dpm}
  d^\pm := \inf \big\{ x_j - x_i : i < j \text{ are neighbors and } b_i = b_j = \pm 1 \big\} \in (0,\infty].
\end{equation}

\begin{thm}[Properties of $(P_n)$] \label{t:Pn}
Let $n \geq 2$, $T > 0$ and $(\bx^\circ, \bb^\circ) \in \cZ_n$. 
Let $g$ and $f$ satisfy Assumption \ref{a:fg} for some $a \in (-1,\infty)$. Then $(P_n)$ has a solution $(\bx, \bb)$ with initial datum $(\bx^\circ, \bb^\circ)$ in the sense of Definition \ref{d:Pn}. This solution is unique modulo relabeling (see Remark~\ref{r:unique-modulo-renumbering}). 
Moreover, setting $S$ as the set of all collision times of the solution $(\bx, \bb)$, the following properties hold: 
\begin{enumerate}[label=(\roman*)]
  \item \emph{(Lower bound on distance between neighbors of equal sign)}. There exists a constant $c > 0$ such that \label{t:Pn:LB:d}
   \[ 
     d^\pm(t) \geq c \qquad  \text{for all } t \in [0,T];
     \]
   \item \emph{(Lower bound on distance between neighbors of opposite sign)}.  \label{t:Pn:LB:pm}
   Let $i < j$ be neighboring particles with opposite sign at time $t_0 \geq 0$. Then there exist constants $C, c > 0$ such that
\begin{align}
\label{opposite-sign-lower-bound} 
x_j(t) - x_i(t) \geq \big( c - C (t - t_0) \big)^{\tfrac1{2+a}}
\end{align}
for all $t_0 \leq t \leq t_0 + c/C$;

  \item \emph{(Upper bound prior to collision)}. \label{t:Pn:C12}
   For any $\tau \in S$ and any particles $i < j$ which collide at $\tau$ and are neighbors prior to $\tau$, there exists a constant $C > 0$ such that 
   $$ x_j(t) - x_i(t) \leq C (\tau - t)^{\tfrac1{2+a}} 
      \quad \text{for all } t \in [0, \tau]; $$ 
   
   \item \emph{(Lower bound prior to collision)}. \label{t:Pn:LB:col}
   Assume that there exist $a' \in (-1, a]$ and $c', \e_0 > 0$ such that $|x f'(x)| \geq c' / x^{1 + a'}$ for all $x \in (0, \e_0)$. Then, for each collision point $(\tau, y)$, there exist a constant $c > 0$ and indices $i < j$ such that $x_i(\tau) = x_j(\tau) = y$, $b_i(\tau-) \neq 0$, $b_j(\tau-) \neq 0$ and 
   \begin{align*}
     x_j(t) - y &> c (\tau - t)^{\tfrac1{2+a'}}, \\
     x_i(t) - y &< -c (\tau - t)^{\tfrac1{2+a'}}
   \end{align*}
   for all $t < \tau$ large enough; 
  
  \item \emph{(Stability with respect to $\bx^\circ$ and $\fext$)}. \label{t:Pn:stab}
Let $S = \{ \tau_1, \ldots, \tau_K \}$ with $ 0 := \tau_0 < \tau_1 < \ldots < \tau_K \leq \tau_{K+1} =: T$. Let $\fext^\sigma : \R \to \R$ be Lipschitz continuous with $\| \fext^\sigma - \fext \|_\infty \to 0$ as $\sigma \to 0$. Let $(\bx^{\sigma,\circ}, \bb^\circ) \in \cZ_n$ be such that $\bx^{\sigma,\circ} \to \bx^\circ$ as $\sigma \to 0$. Let $(\bx^\sigma, \bb^\sigma)$ be the solution of $(P_n)$ with initial data $(\bx^{\sigma,\circ}, \bb^\circ)$ and external force $\fext^\sigma$. Then, for each $k = 0,\ldots,K$, there exists a relabeling of $(\bx^{\sigma}, \bb^\sigma)$ (which may depend on $k, \sigma$) such that
\begin{itemize}
   \item $\bx^\sigma \to \bx$ in $C([\tau_k, \tau_{k+1}]; \R^n)$ as $\sigma \to 0$, and
   \item $\bb^\sigma = \bb$ on any $[t_0, t_1] \subset (\tau_k, \tau_{k+1})$ for all $\sigma$ small enough.
 \end{itemize}  
\end{enumerate} 
\end{thm}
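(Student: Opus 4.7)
The plan is proof by induction on the successive collision times. Away from collisions the right-hand side of \eqref{Pn:ito:forces} is $C^1$ in $\bx$ on $\cZ_n$ by \ref{a:fg:reg} and the Lipschitz continuity of $g$, so Picard--Lindel\"of produces a unique $C^1$ solution on a maximal open interval $(\tau_k,\tau_{k+1})$ on which no two charged particles meet. Existence reduces to showing that the solution extends continuously to $\tau_{k+1}$, that the annihilation rule applies there unambiguously, and that the ODE can then be restarted; since at most $n^+ \wedge n^-$ collisions can occur, the induction terminates by time $T$, and uniqueness modulo relabeling follows from uniqueness on each intercollision piece.

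I would prove \ref{t:Pn:LB:d} first by a bootstrap on the Lipschitz constant of $f$. As long as $d^+(t) \wedge d^-(t) \geq c_0/2$ with $c_0 := d^+(0) \wedge d^-(0)$, $f$ is Lipschitz on $[c_0/2,\infty)$ with some constant $M$. For two neighboring positive particles $x_i < x_j$ the derivative of $x_j - x_i$ consists of the mutual repulsion $\tfrac{2}{n} f(x_j - x_i) \geq 0$, the external difference $g(x_j) - g(x_i) \geq -\Lip(g)(x_j - x_i)$, and contributions from each other charged $x_k$ bounded in absolute value by $\tfrac{M}{n}(x_j - x_i)$ since $|x_k - x_j|,|x_k - x_i| \geq c_0/2$. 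Summing and applying Gr\"onwall yields $d^\pm(t) \geq c_0\,e^{-Ct}$, closing the bootstrap. Consequently, no same-sign collision is possible, and at any collision time the colliding indices have strictly alternating signs.

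To extend across a collision time $T_* = \tau_{k+1}$, I would first prove \ref{t:Pn:C12} for the pair $(x_i,x_j)$ whose distance $d = x_j - x_i$ first reaches zero at $T_*$. The scalar ODE is $\dot d = -\tfrac{2}{n}f(d) + R(t)$ with $|R|$ uniformly bounded in terms of $c$ from \ref{t:Pn:LB:d}, $\Lip(g)$ and $\sup_{x \geq c} f(x)$. Multiplying by $d^{1+a}$ and using \ref{a:fg:singUB} gives $\frac{d}{dt}(d^{2+a}) \geq -C'$; integrating to $T_*$ with $d(T_*) = 0$ yields $d(t) \leq C''(T_* - t)^{1/(2+a)}$, which also bounds every pair in the colliding cluster $J$ with common limit $y$. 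Combining monotonicity with \ref{a:fg:singUB} also extracts the pointwise estimate $|f'(x)| \leq C/x^{2+a}$ via $f(x/2) - f(x) \geq (x/2)|f'(x)|$. Now $\sum_{k \in J} \dot x_k$ is bounded since all singular cross-terms cancel pairwise by oddness of $f$, and for each adjacent opposite-sign pair $(i,i+1) \subset J$ the intra-pair singular terms cancel in $\dot x_i + \dot x_{i+1}$, while the residual singular contributions are of size $|f'|\cdot(x_{i+1}-x_i) \lesssim (T_*-t)^{-(1+a)/(2+a)}$, which is integrable near $T_*$ since $(1+a)/(2+a) < 1$. Hence $x_i + x_{i+1}$ has a limit, and together with $(x_{i+1}-x_i)(t) \to 0$ this gives $x_i(T_*-) = x_{i+1}(T_*-) = y$. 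An inductive pairing over $J$ gives convergence of every $x_k$ with $k \in J$ to $y$. Applying the annihilation rule then preserves \ref{t:Pn:LB:d} for the surviving charged particles and the ODE is restarted.

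Part \ref{t:Pn:LB:pm} follows from the reverse inequality $\dot d \geq -\tfrac{2}{n}f(d) - C$ combined with the same multiplication by $d^{1+a}$. Part \ref{t:Pn:LB:col} uses the additional assumption $|f'(x)| \geq c'/x^{2+a'}$, which yields $f(x) \geq c''/x^{1+a'}$; applied to the outermost pair in the colliding cluster (where the mutual attraction dominates the forces from other cluster members via \ref{t:Pn:C12}), the same ODE comparison produces the asserted power-law lower bound. For part \ref{t:Pn:stab}, standard continuous dependence on data gives convergence on each intercollision interval; to cross each collision one uses \ref{t:Pn:C12} together with the pairing analysis of paragraph three to conclude that each perturbed collision lies in a shrinking neighborhood of the unperturbed collision point, so that after an appropriate relabeling the post-collision configurations converge. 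The main obstacle throughout is the third paragraph's collision analysis for multi-particle clusters: tracking the cancellations in $\sum_{k \in J} \dot x_k$ and in the successive pair sums requires a delicate combination of the alternating-sign structure, the a priori upper bound \ref{t:Pn:C12}, Lemma~\ref{l:f2}, and the derived pointwise bound $|f'| \lesssim 1/x^{2+a}$, and this is the step that departs most from the Coulomb analysis in \cite{VanMeursPeletierPozar22}.
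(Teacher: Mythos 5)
Your overall skeleton (induction over collision times, ODE comparison for the power laws, oddness cancellation for the cluster sums) matches the paper's, but the proof of Property \ref{t:Pn:LB:d} --- on which everything else rests --- has a fatal gap. In your bootstrap you bound the contribution of every other charged particle $x_k$ to $\frac{d}{dt}(x_j-x_i)$ by $\frac Mn(x_j-x_i)$ ``since $|x_k-x_j|,|x_k-x_i|\ge c_0/2$''. But the bootstrap hypothesis only controls distances between \emph{same-sign neighbors}; a particle $x_k$ of opposite sign to $x_i$ may approach $x_i$ arbitrarily closely (this is exactly what happens before a collision), and then $f(x_i-x_k)$ blows up and no Lipschitz bound on $f$ is available. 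The same failure occurs for two positive particles separated by a charged negative particle, since such particles are not ``neighbors'' in the sense of \eqref{dpm} and their distance is not controlled by $d^+$. Your Gr\"onwall estimate therefore does not close. The paper's proof of \ref{t:Pn:LB:d} cannot avoid the sign structure: it shows that a lone opposite-sign particle contributes \emph{favorably} to $\frac d{dt}d^+$ (using $f'\le 0$), that an adjacent dipole contributes favorably (Lemma \ref{l:f2}, using $f''\ge 0$), and only after discarding these does the remaining sum telescope via Lemma \ref{l:f2} to $\frac2n f(nd^+)$. Your proposal never invokes $f'\le0$, $f''\ge0$ or Lemma \ref{l:f2} at this step, which is precisely where they are indispensable.

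Two further steps would fail as written. First, in the collision analysis you claim $\dot d=-\frac2nf(d)+R$ with $|R|$ uniformly bounded; at a multi-particle collision the other cluster members are arbitrarily close to $x_i,x_j$, so $R$ is unbounded. Only the one-sided bound $\dot d\ge-\frac2nf(d)-C_0$ holds, and its proof again needs the alternating-sign pairing (the nonnegativity of the sum in \eqref{pf:zd}); fortunately one-sided bounds suffice for \ref{t:Pn:LB:pm} and \ref{t:Pn:C12}. Second, your existence proof for $\bx(T_*-)$ estimates the residual in $\dot x_i+\dot x_{i+1}$ by $|f'|\cdot(x_{i+1}-x_i)\lesssim(T_*-t)^{-(1+a)/(2+a)}$; this requires a \emph{lower} bound of order $(T_*-t)^{1/(2+a)}$ on intra-cluster distances so that $|f'(\xi)|\lesssim \xi^{-(2+a)}\lesssim (T_*-t)^{-1}$, which is essentially Property \ref{t:Pn:LB:col} and is not available under the standing assumptions (it needs the extra hypothesis $|xf'(x)|\ge c'/x^{1+a'}$). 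The paper sidesteps integrability entirely: it deduces existence of the left limits $d_i(\tau-)$ from the one-sided inequality alone via a soft contradiction argument, and recovers $\bx(\tau-)$ through the signed first moment $M$, whose derivative is bounded by oddness of $f$. Your use of oddness for $\sum_{k\in J}\dot x_k$ is the correct ingredient there, but the pairwise sums $\dot x_i+\dot x_{i+1}$ cannot be controlled the way you propose.
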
 

Before proving Theorem \ref{t:Pn} we give five comments. First, we stress that the constants $C, c$ appearing in Theorem \ref{t:Pn} may depend on $n, T, \bx^\circ, \bb^\circ, f, g$. Second, similar to \cite[Corollary 2.5]{VanMeursPeletierPozar22}, we observe that Theorem \ref{t:Pn}\ref{t:Pn:LB:d} implies Corollary \ref{c:Pn}. 

\begin{cor}[Multiple-particle collisions]\label{c:Pn}
Let $(\tau, y) \in S \times \R$ be a collision point, and let $I$  be the set of indices of all particles which collide at $(\tau, y)$, i.e.
  \begin{equation*} 
    I := \{ i : x_i(\tau) = y, \, b_i(\tau-) \neq 0 \}.
  \end{equation*}
Then, prior to collision, any two neighboring particles with indices in $I$ have opposite sign. In particular, 
\begin{equation*}
\Big| \sum_{i \in I} b_i(\tau-) \Big| \leq 1.
\end{equation*}
\end{cor}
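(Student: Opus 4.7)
The plan is to derive the corollary directly from Theorem \ref{t:Pn}\ref{t:Pn:LB:d}, which provides a uniform positive lower bound on the minimum distance between neighboring charged particles of the same sign.

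First, I would fix $\delta > 0$ small enough that no collision time lies in $(\tau - \delta, \tau)$ (possible since $S$ is finite). On this interval no $b_k$ has jumped, so every $i \in I$ is charged (recall $b_i(\tau-) \neq 0$), the ordering property of $\cZ_n$ puts the charged particles in increasing order of position, and $x_i(t) \to y$ as $t \uparrow \tau$ for each $i \in I$ by continuity of $\bx$. Next I would show that on $(\tau - \delta, \tau)$ the elements of $I$ form a consecutive block among the charged particles: if some charged $k \notin I$ sits between $i, j \in I$ (so $x_i(t) < x_k(t) < x_j(t)$), then the squeeze theorem forces $x_k(t) \to y$, and continuity yields $x_k(\tau) = y$, hence $k \in I$, a contradiction.

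The main step is the following contradiction. Suppose two particles $i < j$ in $I$ that are consecutive (in index, equivalently in position) have the same sign, i.e.\ $b_i(\tau-) = b_j(\tau-) \in \{-1,+1\}$. By the block property from the previous paragraph, $i$ and $j$ are neighbors in the sense of \eqref{dpm} at every time $t \in (\tau - \delta, \tau)$. Theorem \ref{t:Pn}\ref{t:Pn:LB:d} then gives
\[
  x_j(t) - x_i(t) \geq d^{\pm}(t) \geq c > 0
  \qquad \text{for all } t \in (\tau - \delta, \tau).
\]
Letting $t \uparrow \tau$ and using continuity of $\bx$ we obtain $x_j(\tau) - x_i(\tau) \geq c > 0$, contradicting $x_i(\tau) = x_j(\tau) = y$.

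It follows that, listing the indices of $I$ as $i_1 < i_2 < \ldots < i_K$, the charges $b_{i_1}(\tau-), b_{i_2}(\tau-), \ldots, b_{i_K}(\tau-) \in \{-1,+1\}$ alternate in sign, so $\sum_{\ell=1}^K b_{i_\ell}(\tau-)$ equals $0$ if $K$ is even and $\pm 1$ if $K$ is odd; in either case $\bigl|\sum_{i \in I} b_i(\tau-)\bigr| \leq 1$. I do not expect any significant obstacle: the corollary is essentially a repackaging of Theorem \ref{t:Pn}\ref{t:Pn:LB:d} combined with the elementary ordering/continuity argument showing that $I$ is a consecutive block just before $\tau$.
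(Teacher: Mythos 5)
Your proof is correct and is exactly the argument the paper intends: the text states that Corollary \ref{c:Pn} follows from Theorem \ref{t:Pn}\ref{t:Pn:LB:d}, and your elaboration (no collisions on $(\tau-\delta,\tau)$, the colliding indices form a consecutive block of neighbors, and the uniform lower bound $d^\pm \geq c$ excludes two neighboring same-sign particles from colliding, forcing alternating signs) is the standard way to fill in that one-line deduction, matching \cite[Corollary 2.5]{VanMeursPeletierPozar22}.
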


Third, Theorem \ref{t:Pn}\ref{t:Pn:LB:pm}--\ref{t:Pn:LB:col} raise the question whether $t \mapsto \bx(t)$ is H\"older continuous with exponent $\frac1{2+a}$. If $f$ is a regular perturbation of the homogeneous force field $f_a (x) = \sign(x) |x|^{-1-a}$, then the answer is affirmative. This is proven in \cite{deJongVanMeurs23ArXiv}, which has been written simultaneously to the present paper, and relies on Theorem \ref{t:Pn} and on the techniques in the proof below. 

The fourth comment is the following remark.

\begin{rem}[Weakened assumptions on $f$] \label{r:Pn:ass:f}
With minor modifications to the proof below, the assumptions on $f$ made in Theorem \ref{t:Pn} can be weakened as follows. In terms of regularity, $f \in W_{\loc}^{2,1}(0,\infty)$ is sufficient. The monotonicity conditions in Assumption \ref{a:fg}.\ref{a:fg:mon} only need to hold locally (on $(0,\delta)$ for some $\delta > 0$), provided that $f'$ is bounded on $[1,\infty)$. 
\end{rem}

Fifth, we comment on the similarities and the key differences of the proof below with that in \cite{VanMeursPeletierPozar22}. One similarity is that the three challenges mentioned below \eqref{sol:n2:vMPP} are overcome by establishing Property \ref{t:Pn:LB:d}. Another similarity is the outline of the proof. The key difference, however, is that \cite{VanMeursPeletierPozar22} crucially relies on the (scaled and signed) moments of $\bx$ given by
\begin{equation*} 
  M_k (\bx) := \frac1k \sum_{i=1}^n x_i^k \qquad
  \text{for } k = 1, \ldots, n. 
\end{equation*}
Indeed, for the particular choices $V(x) = -\log|x|$, $U = 0$ and $\alpha_n = 1$ in \cite{VanMeursPeletierPozar22} it follows from a direct computation that
\begin{equation} \label{pf:va}
  \frac d{dt} M_1(\bx(t)) = 0,
  \qquad \frac d{dt} M_2(\bx(t)) = \frac1n \sum_{i=1}^n \sum_{j=1}^{i-1} b_i(t) b_j(t).
\end{equation}
Note that the right-hand sides are \textit{constant} in between any two consecutive collision times. In particular, the singularity of the interaction force cancels out when computing the time derivative of the moments $M_k$. The derivatives of higher order moments can be bounded in terms of lower order moments. This gives an a priori Lipschitz bound on all moments. In \cite{VanMeursPeletierPozar22} these bounds are exploited in the proofs of most of the properties listed in Theorem \ref{t:Pn} by translating back and forth between the list $\bx$ of particle positions and the list $(M_1(\bx), \ldots, M_n(\bx))$ of moment values.

However, if $f$ has a power-law singularity with exponent greater than $1$, then the singularity in the computation in \eqref{pf:va} does not cancel out, and we do not get automatically a bound on the derivative of the moments. The author did not see a modification of the moment bounds in  \cite{VanMeursPeletierPozar22} which would work for general $f$. Instead, in the proof below we reveal that the monotonicity of $f$ and $f'$ are sufficient for constructing alternative arguments at all places where \cite{VanMeursPeletierPozar22} relies on moment bounds.

Another important difference with \cite{VanMeursPeletierPozar22} is the uniform convergence of $\bx^\sigma$ in Property \ref{t:Pn:stab}. In \cite{VanMeursPeletierPozar22} the uniform convergence is stated only in terms of the moments, which has a limited practical use. The need for a $\sigma$-dependent relabeling comes from the uniqueness issue mentioned in Remark \ref{r:unique-modulo-renumbering}.

\begin{proof}[Proof of Theorem \ref{t:Pn}]
Several parts of the proof below are similar to the proof of \cite[Theorem 2.4]{VanMeursPeletierPozar22}, which is the analogue of Theorem \ref{t:Pn} with $f(x) = \frac1x$ and $g = 0$. We  briefly summarize these parts of the proof, and treat the new additions and modifications in full detail.
\medskip

\noindent \textbf{Uniqueness.} The same argument as in \cite{VanMeursPeletierPozar22} applies; standard ODE theory and the imposed regularity in Definition \ref{d:Pn} yield the uniqueness of $\bx$ up to and including the collision times. The uniqueness of $\bb$ (modulo relabeling) holds by construction (see Definition \ref{d:Pn}\ref{d:Pn:ann:rule}). 
\medskip

\noindent \textbf{Existence and Property \ref{t:Pn:LB:d}.} Standard ODE theory provides the existence of $\bx$ up to the first collision time $\tau := \tau_1$. To extend the solution beyond $\tau$, it is sufficient to prove that the limit $\bx(\tau-)$ exists and that Property \ref{t:Pn:LB:d} holds for all $t \in [0,\tau)$. The statement that these two conditions are sufficient was shown in  \cite{VanMeursPeletierPozar22}; we repeat the argument here. Property \ref{t:Pn:LB:d} implies Corollary \ref{c:Pn}. Then, at each point $y \in \{ x_i(\tau) : 1 \leq i \leq n \}$ Definition \ref{d:Pn}\ref{d:Pn:ann:rule} allows us to choose $\bb(\tau)$ such that there is at most one charged particle at $(\tau, y)$. For any possible choice of $\bb(\tau)$, it is easy to see that $d^\pm(\tau) \geq d^\pm(\tau-)$, and thus Property \ref{t:Pn:LB:d} also holds  at $t = \tau$. Furthermore, $(\bx(\tau), \bb(\tau)) \in \cZ_n$. Then, we can continue the ODE with initial condition $(\bx(\tau), \bb(\tau))$ until the second collision time $\tau_2$. By iteration over the collision times this construction can be continued until $T$. Therefore, it is left to show that $\bx(\tau-)$ exists and that Property \ref{t:Pn:LB:d} holds with $[0,T]$ replaced by $[0,\tau)$. For convenience, we assume that there are no neutral particles prior to $\tau$.
\smallskip

\emph{Property \ref{t:Pn:LB:d} on $[0, \tau)$}. 
For convenience, we focus on $d^+$ and first consider the case $\fext = 0$. 
Let $t \in (0, \tau)$ be a point of differentiability of $d^+$, and let $x_i(t)$ and $x_{i+1}(t)$ be particles for which the minimum in \eqref{dpm} is attained. Then, $(x_{i+1} - x_i)(t) = d^+(t)$, $b_{i+1}(t) = b_i(t) = 1$ and at time $t$, 
\begin{align} \label{pf:zq} 
  \frac d{dt} d^+
  = \frac{ dx_{i+1} }{dt} - \frac{ dx_i}{dt} 
  = \frac2n f(x_{i+1} - x_i) + \frac{1}{n} \sum_{ j \notin \{i, i + 1 \} } b_j \big( f(x_{i+1} - x_j) - f(x_i - x_j) \big).
\end{align} 

To continue the estimate, we apply the argument used in \cite{VanMeursPeletierPozar22} to bound the sum in \eqref{pf:zq} from below. For convenience, we consider the part of the sum where $1 \leq j \leq i-1$. The argument from \cite{VanMeursPeletierPozar22} removes a specific set of indices $\hat J \subset \{1,\ldots,i-1\}$ from the sum such that for the set of remaining indices $J := \{1, \ldots, i-1\} \setminus \hat J$ we have
\begin{equation} \label{pf:vp}
  b_j = 1 \text{ for all } j \in J,
  \qand
  \min_{ k, \ell \in J \cup \{i, i+1\}} |x_k - x_\ell| = d^+.
\end{equation}
For this argument to apply, it is sufficient to show that the terms which are removed yield a nonnegative contribution to the sum in \eqref{pf:zq}. 

In \cite{VanMeursPeletierPozar22} $J$ (and therefore also $\hat J$) is constructed in an iterative manner as follows. It starts with $J = \{1, \ldots, i-1\}$. First, it removes all pairs $(j, j+1)$ from $J$ for which $b_j = 1$ and $b_{j+1} = -1$. Such pairs correspond to the setting in Figure \ref{fig:dip} (right), and thus by Lemma \ref{l:f2} such pairs indeed yield a nonnegative contribution to the sum in \eqref{pf:zq}. Second, it removes all remaining negative particles. From Figure \ref{fig:dip} (left) and the monotonicity of $f$ it is easy to see that such particles also yield a nonnegative contribution to the sum in \eqref{pf:zq}. Hence,
\[
  \sum_{ j =1 }^{i-1} b_j \big( f(x_{i+1} - x_j) - f(x_i - x_j) \big)
  \geq \sum_{ j \in J } \big( f(x_{i+1} - x_j) - f(x_i - x_j) \big).
\]
Noting from \eqref{pf:vp} that $x_i - x_j \geq (i-j) d^+$ and $x_{i+1} - x_j = d^+ + x_i - x_j$, we apply Lemma \ref{l:f2} to further estimate
\begin{align*}
  \sum_{ j \in J } \big( f(x_{i+1} - x_j) - f(x_i - x_j) \big) 
  &\geq \sum_{ k =1 }^{|J|} \big( f((k+1) d^+) - f(k d^+) \big) \\
  &= f((|J|+1) d^+) - f(d^+).
\end{align*}
Using $|J| < n-1$ and applying a similar estimate to the other part of the sum in \eqref{pf:zq} corresponding to $i+2 \leq j \leq n$, we obtain
\begin{align*}
  \frac d{dt} d^+
  \geq \frac2n f(d^+) + \frac{2}{n} \big( f(n d^+) - f(d^+) \big)
  = \frac2n f(n d^+) 
  \geq 0. 
\end{align*}
This proves Property \ref{t:Pn:LB:d} on $[0,\tau)$ with $c = d^+(0)$ for when $\fext = 0$. 

Next we generalize to nonzero $\fext$. From \eqref{Pn:ito:forces} we observe that the contribution of $\fext$ can be treated in the right-hand side of \eqref{pf:zq} independently of the computation above. This yields
\[
  \frac d{dt} d^+
  \geq \frac2n f(n d^+) + \fext(x_i + d^+) - \fext(x_i)
  \geq \frac2n f(n d^+) - \| \fext' \|_\infty d^+.
\]
Since $f(0+) = \infty$, there exists $c > 0$ such that the right-hand side is positive whenever $d^+ \in (0,c)$. Hence, either $d^+(t) \geq c$ for all $t \in [0,\tau)$, or
$
  \frac d{dt} d^+
  \geq 0.
$
This proves Property \ref{t:Pn:LB:d} on $[0, \tau)$.
\smallskip

\emph{Existence of the limit $\bx(\tau -)$}. Let
\[ 
  d_i := x_{i+1} - x_i 
  \quad \text{for } i = 1,\ldots,n-1
\] 
be the distances between neighbors, and take 
\[
  \underline d_i := \liminf_{t \uparrow \tau} d_i(t)
  \quad \text{for } i = 1,\ldots,n-1. 
\]
For technical reasons, we set $\underline d_0 := \underline d_n := 1$. 
If $\underline d_i > 0$, then $x_{i+1}$ and $x_i$ cannot collide with each other at $\tau$. To group together particles which might collide at $\tau$, we split the index set $\{1,\ldots,n\}$ into the tentative collision clusters $I_l$ for $1 \leq l \leq L$: $i$ and $i+1$ belong to the same cluster $I_l$ if and only if $\underline d_i = 0$. Note that a cluster may be a singleton and that $I_1, \ldots, I_L$ are disjoint. Later in the proof it will  turn out that any two particles in the same cluster indeed collide with each other at $\tau$, and that at $\tau$ there are no collisions between any particles from different clusters.  

We prove the existence of $\bx(\tau -)$ by showing that for each tentative collision cluster $I$ the limits $\{ x_i(\tau -) \}_{i \in I}$ exist. With this aim, let $I = \{k, \ldots, \ell\}$ be any tentative collision clusters. Since $\underline d_i = 0$ for all $k \leq i \leq \ell-1$, Property \ref{t:Pn:LB:d} yields $b_i b_{i+1} = -1$ for all $k \leq i \leq \ell-1$. From $\underline d_{k-1}, \underline d_\ell > 0$ and the ordering of the particles it follows that the particles in $I$ remain separated from the particles outside of $I$ on $[0,\tau]$ by some positive distance. Then, since $f(x)$ is bounded away from $x=0$, we obtain for any $i \in I$ that
  \begin{equation} \label{pf:zc}
    \frac d{dt} x_i = \frac1n \sum_{ \substack{ j = k \\ j \neq i } }^{\ell} (-1)^{j-i} f(x_i - x_j) + F_i(\bx) + b_i g(x_i)
  \end{equation}
  for some $t \mapsto F_i(\bx(t))$ which is uniformly bounded on $[0, \tau)$.

Next we change unknowns from $x_k < \ldots < x_{\ell}$ to $d_k, \ldots, d_{\ell-1} > 0$ and the signed first moment
\begin{equation} \label{pf:yb}
  M := \sum_{i=k}^\ell x_i \in \R. 
\end{equation} 
Since this change of unknowns is given by a linear, invertible map, it is sufficient to show that the left limits of $M, d_k, \ldots, d_{\ell-1}$ exist at $\tau$. 

In preparation for this, let
\[
   \underline x := \inf_{[0,\tau)} x_k \leq \sup_{[0,\tau)} x_\ell =: \o x.
\]
We claim that $\underline x > -\infty$ and $\o x < \infty$. Indeed, by the monotonicity of $f$, it follows from \eqref{pf:zc} that
\begin{align*}
  \frac d{dt} x_\ell 
  \leq C + b_\ell g(x_\ell)
  \leq C' + \| g' \|_\infty x_\ell,
\end{align*}
and thus $\o x < \infty$. Similarly, we obtain $\underline x > -\infty$.

Next we prove that the limit $M(\tau-)$ exists. By the oddness of $f$, it follows from \eqref{pf:zc} that
  \begin{align} \label{pf:ya}
    \frac d{dt} M 
    &= \sum_{i=k}^\ell F_i(\bx) + \sum_{i=k}^\ell b_i (g(x_i) - g(\o x)) + g(\o x) \sum_{i=k}^\ell b_i \\\notag
    &\leq C +  \sum_{i=k}^\ell \| g' \|_\infty (\o x - x_i)
    = C' - \| g' \|_\infty M.
  \end{align}
Similarly, we obtain $\frac d{dt} M \geq - C + \| g' \|_\infty M$. Hence, $M$ is Lipschitz continuous on $[0, \tau)$, and thus the limit $M(\tau -)$ exists.

Next we prove the existence of the limits $d_k(\tau-), \ldots, d_{\ell-1}(\tau-)$. If $\ell = k$, then this list is empty and thus the statement is obvious. We let $\ell > k$, take an arbitrary $k \leq i \leq \ell - 1$ and proceed similarly as in \eqref{pf:zq}, but now with $b_j b_{j+1} = -1$ for all $k \leq j \leq \ell - 1$. We obtain from \eqref{pf:zc} that 
  \begin{multline} \label{pf:zd}
    \frac d{dt} d_i
    = \frac{ dx_{i+1} }{dt} - \frac{ dx_i}{dt} 
    = -\frac2n f(d_i) + \frac{1}{n} \sum_{ \substack{ j = k \\ j \neq i, i+1 } }^{\ell} (-1)^{i-j + 1} \big( f(x_{i+1} - x_j) + f(x_i - x_j) \big)  \\
    + F_{i+1} (\bx) - F_i(\bx)
    + b_{i+1} (g(x_{i+1}) + g(x_i)).
  \end{multline}

We claim that the sum in \eqref{pf:zd} is nonnegative. To see this, consider for convenience the part of the sum corresponding to $k \leq j \leq i-1$. Since $f$ is nonincreasing, each pair of indices $\{i-2m, i-2m+1\}$ yields a nonnegative contribution to the sum. This proves the claim for when the number of summands is even. If the number of summands is odd, then the term $j=k$ remains. However, this terms is nonnegative since $f \geq 0$ and $(-1)^{i-k+1} = 1$. This proves the claim.
 
Using this claim, we obtain from \eqref{pf:zd}
  \begin{align} \label{pf:zb}
    \frac d{dt} d_i
    \geq -\frac2n f(d_i) - C - 2 \max_{[\underline x, \o x]} |g|
    \geq -\frac2n f(d_i) - C_0
  \end{align}
  for some constant $C_0 > 0$.

Finally, we use \eqref{pf:zb} to show by contradiction that the limit $d_i(t -)$ exists. Suppose that this limit does not exist. Then 
  \[
    \limsup_{t \uparrow \tau} d_i(t) =: 3 c_0 > 0,
  \]
  and thus there exist $t_0 \in (0, \tau)$ such that
  \[
      \tau - t_0 \leq \delta := \frac{ c_0 }{\frac2n f(c_0) + C_0}
      \qand
      d_i(t_0) \geq 2 c_0.
  \]
  Then, for as long as $d_i \geq c_0$, i.e.\ on $(t_0, t_1)$ for a maximal $t_1 \in (t_0, \tau]$, we have by \eqref{pf:zb} and the monotonicity of $f$ that
  \[
      \frac d{dt} d_i 
      \geq -\frac2n f(c_0) - C_0
      = -\frac{c_0}\delta
      \geq - \frac{c_0}{\tau - t_0}.
  \]
  Thus, for any $t \in (t_0, t_1]$,
  \[
    d_i(t) 
    = d_i(t_0) + \int_{t_0}^t \frac d{ds} d_i(s) \, ds
    \geq 2 c_0 - (t - t_0) \frac{c_0}{\tau - t_0} 
    \geq c_0.
  \]
Hence, $t_1 = \tau$ and thus $d_i \geq c_0$ on $[t_0, \tau]$. This contradicts with $\underline d_i = 0$. This completes the proof for the existence of the limit $\bx(\tau-)$.  
\medskip

Next we prove Properties \ref{t:Pn:LB:pm}-\ref{t:Pn:LB:col}. From the iterative manner in which we have constructed the solution to \eqref{Pn}, it follows that it is sufficient to prove these properties only up to the first collision time $\tau := \tau_1$. In addition, we assume for convenience that there are no neutral particles prior to $\tau$. For Properties \ref{t:Pn:LB:pm} and \ref{t:Pn:C12} this implies that $j = i+1$ and thus $x_j - x_i = d_i$.
\smallskip

\noindent \textbf{Property \ref{t:Pn:LB:pm}.} We may assume that $d_i(t_0) \leq 1$.
Similar to the computation in \eqref{pf:zq},\eqref{pf:zd}, we get
\begin{align*}  
\frac{d}{dt} d_i \geq - \frac2n \bigg( f(d_i) + 2 \sum_{k=1}^{n} f(k \underline d) \bigg) + b_{i+1} \fext(x_{i+1}) - b_i \fext(x_i),
\end{align*}
where $\underline d := \min(d^+, d^-)$. Since $\bx$ is bounded on $[0,\tau]$, we have that $\fext (x_j)$ is bounded on $[0,\tau]$ for all $j$. By Property \ref{t:Pn:LB:d}, $\underline d$ is bounded from below. Then, we obtain from the monotonicity of $f$ and Assumption \ref{a:fg}\ref{a:fg:singUB} that
\begin{align*} 
\frac{d}{dt} d_i 
\geq - \frac2n f(d_i) - C
\geq - \frac{C_0}{d_i^{1+a}} 
\end{align*}
for some constants $C, C_0 > 0$.
By comparison with $\frac{d}{dt} \mathsf d = -C_0 \mathsf d^{-(1+a)}$ with initial datum $\mathsf d(t_0) = d_i(t_0)$, we deduce \eqref{opposite-sign-lower-bound}.  
\medskip

\noindent \textbf{Property \ref{t:Pn:C12}.} It is sufficient to prove that $d_i(t) \leq C (\tau - t)^{1/(2+a)}$ for all $t \in [\tau - \delta, \tau]$ for some $C, \delta > 0$. By the continuity of $d_i$ and $d_i(\tau) = 0$, we may therefore assume that $d_i(t)$ is small enough. Similar as in the proof of the existence of $\bx(\tau-)$ we obtain that $d_i$ satisfies \eqref{pf:zb} for some $C_0 > 0$. By Assumption \ref{a:fg}\ref{a:fg:singUB} this implies for $\delta$ small enough that
\[
  \left\{ \begin{aligned}
     \frac d{dt} d_i
    &\geq -\frac{ 2 C }{ n d_i^{1+a} } - C_0 \geq -\frac{ C_1 }{ d_i^{1+a} }
    &&\text{on } (\tau - \delta, \tau) \\
    d_i(\tau)
    &= 0
    &&
  \end{aligned} \right.
\]
for some constant $C_1 > 0$. By comparison with \comm{p.103}
\[
  \left\{ \begin{aligned}
     \frac d{dt} \mathsf d
    &= -\frac{ C_1 }{ \mathsf d^{1+a} } 
    &&\text{on } (\tau - \delta, \tau) \\
    \mathsf d(\tau)
    &= 0,
    &&
  \end{aligned} \right.
\]
we obtain for all $t \in (\tau - \delta, \tau)$ that
\[
  d_i(t) \leq \mathsf d(t) = C_2 (\tau - t)^{\tfrac1{2+a} }.
\]
for some constant $C_2 > 0$. This proves Property \ref{t:Pn:C12}.
\medskip

\noindent \textbf{Property \ref{t:Pn:LB:col}.} We may assume without loss of generality (recall Corollary \ref{c:Pn}) that:
\begin{itemize}
  \item $y = 0$,
  \item the indices of all particles colliding at $(\tau, 0)$ are given by $I = \{k, k+1, \ldots, \ell\}$ for some $1 \leq k < \ell \leq n$,
  \item $b_i(\tau-) = (-1)^{i-k+1}$ for all $k \leq i \leq \ell$, and
  \item for each $k \leq i \leq \ell$, the ODE for $x_i$ on $(0, \tau)$ is of the form \eqref{pf:zc} for some $t \mapsto F_i(\bx(t))$ which is uniformly bounded on $[0, \tau)$. 
\end{itemize}
We set 
\[
  G_i(\bx) := F_i(\bx) + b_i g(x_i).
\]
Since the precise expressions of $F_i$ and $G_i$ are irrelevant to the following arguments, we may further assume that $k=1$.

We first assume that $\ell$ is odd and treat the case in which $\ell$ is even afterwards. Let
\begin{equation} \label{pf:vm}
  D := x_\ell - x_1. 
\end{equation}
Note that $D$ is continuous on $[0,\tau]$ with $D(\tau) = 0$.
Using \eqref{pf:zc} we write 
\begin{align} \notag 
  \frac d{dt}  D
  &= \frac1n \sum_{ j=1 }^{\ell-1} (-1)^{j-\ell} f(x_\ell - x_j) 
    + \frac1n \sum_{ j=2 }^{\ell} (-1)^{j-1} f(x_j - x_1) 
    + G_\ell(\bx) - G_1(\bx)     \\ \notag
  &= \frac1n \sum_{j = 1}^{(\ell - 1)/2} \big( f(x_{\ell} - x_{\ell - 2j}) - f(x_{\ell} - x_{\ell - 2j + 1}) \big) \\\label{pf:vo}
  &\quad + \frac1n \sum_{j = 1}^{(\ell - 1)/2} \big( f(x_{2j+1} - x_1) - f(x_{2j} - x_1) \big)
  + G_\ell(\bx) - G_1(\bx).
\end{align}
Next we estimate \eqref{pf:vo} from above. Since $G_j$ is uniformly bounded, we apply $G_\ell(\bx) - G_1(\bx) \leq C$. Both summations can be treated similarly; we focus on the latter. By the monotonicity of $f'$ we have for all $x \geq y > z > 0$ that 
\begin{align*}
  f(y) - f(z)
  = \int_z^y f'(\xi) \, d\xi 
  \leq (y-z) f'(x).
\end{align*}
Applying this inequality with $x = D$, $y = x_{2j+1} - x_1$ and $z = x_{2j} - x_1$,
we obtain
\begin{align*}
   \frac1n \sum_{j = 1}^{(\ell - 1)/2} \big( f(x_{2j+1} - x_1) - f(x_{2j} - x_1) \big)
   \leq \frac1n f'(D) \sum_{j = 1}^{(\ell - 1)/2} (x_{2j+1} - x_{2j}).
\end{align*}  
Similarly, we estimate the first sum in the right-hand side of \eqref{pf:vo} as
\[
  \frac1n \sum_{j = 1}^{(\ell - 1)/2} \big( f(x_{\ell} - x_{\ell - 2j}) - f(x_{\ell} - x_{\ell - 2j + 1}) \big)
  \leq \frac1n f'(D) \sum_{j = 1}^{(\ell - 1)/2} (x_{\ell-2j+1} - x_{\ell-2j}).
\]
Substituting both estimates in \eqref{pf:vo} and recalling that $|x f'(x)| \geq c'/x^{1+a'}$ for $x > 0$ small enough, we obtain
\begin{align} \label{pf:uv}
  \frac d{dt} D 
  \leq \frac1n D f'(D) + C
  \leq - \frac{ c_0 }{ D^{1 + a'} } 
\end{align}
on $(\tau - \delta, \tau)$ for some $\delta \in (0, \tau]$ small enough with respect to the continuity of $D$ (recall $D(\tau) = 0$) and some constant $c_0 > 0$. By comparison with
\[
  \left\{ \begin{aligned}
     \frac d{dt} \mathsf D
    &= -\frac{ c_0 }{ \mathsf D^{1+a'} } 
    &&\text{on } (0, \tau) \\
    \mathsf D(\tau)
    &= 0,
    &&
  \end{aligned} \right.
\]
we obtain 
\[
  x_\ell(t) - x_1(t)
  = D(t) \geq \mathsf D(t) = c (\tau - t)^{\tfrac1{2+a'} }
  \qquad \text{for all } t \in (\tau - \delta, \tau)
\]
for some $c > 0$.
Hence, at least one of the following two statements holds:
\begin{itemize}
  \item $x_\ell(t)$ satisfies the upper bound in Property \ref{t:Pn:LB:col},  or
  \item $x_1(t)$ satisfies the lower bound in Property \ref{t:Pn:LB:col}.
\end{itemize}

In fact, both statements hold. To prove this, we show that one statement implies the other. Suppose that 
\begin{equation} \label{pf:zx}
  x_\ell(t) \geq c_0 (\tau - t)^{\tfrac1{2+a'} }
\end{equation}
for all $t \in (\tau - \delta, \tau)$ for some $\delta \in (0,\tau)$. Fix $t \in (\tau - \delta, \tau)$. Recalling the signed first moment $M$ of $x_1,\ldots,x_\ell$ defined in \eqref{pf:yb} with derivative computed in \eqref{pf:ya}, we obtain from $M(\tau) = 0$ that 
\[
  |M(t)| = \bigg| \int_t^\tau \frac{dM}{dt} (s) \, ds \bigg| \leq C_0 (\tau - t)
\]
for some constant $C_0$ independent of $t,\delta$.
Moreover, by the ordering of the particles, 
\[
  M(t) 
  = \sum_{i=1}^\ell x_i(t)
  \geq (\ell-1) x_1(t) + x_\ell(t)
  \geq (\ell-1) x_1(t) + c_0 (\tau - t)^{\tfrac1{2+a'} }.
\]
Rearranging the terms, we obtain
\[
  x_1(t) 
  \leq \frac{M(t)}{\ell-1} - \frac{c_0}{\ell-1} (\tau - t)^{\tfrac1{2+a'} }
  \leq \frac1{\ell-1} \Big( C_0 (\tau - t)^{\tfrac{1+a'}{2+a'} } - c_0 \Big) (\tau - t)^{\tfrac1{2+a'} }.
\]
Using that $\tau - t < \delta$ and taking $\delta$ small enough with respect to the constants $C_0, c_0 > 0$ and $a' \in (-1,a]$, we obtain the desired estimate
\begin{equation} \label{pf:xz}
  x_1(t) 
  \leq -\frac{c}{2(\ell-1)} (\tau - t)^{\tfrac1{2+a'} }
  \leq -\frac c{2n} (\tau - t)^{\tfrac1{2+a'} }.
\end{equation}
A similar argument shows that \eqref{pf:xz} implies \eqref{pf:zx}. This completes the proof of Property \ref{t:Pn:LB:col} for odd $\ell$.

The proof of Property \ref{t:Pn:LB:col} for even $\ell$ is similar; the only difference is that the computation leading to \eqref{pf:uv} is simpler. Indeed, in \eqref{pf:vo} we can still pair up particles which have a negative contribution to $\frac d{dt} D$. Here, we simply estimate these contributions from above by $0$. Since $\ell$ is even, each sum has one particle remaining, and this particle yields a negative contribution to $\frac d{dt} D$. Precisely, \comm{See p.119.bot for $f(x) \geq c/x^{1+a'}$ for $x$ SE}
\begin{align*}
  \frac d{dt} D
  \leq -\frac2n f(x_{\ell} - x_1) + G_\ell(\bx) - G_1(\bx)
  \leq -\frac2n f(D) + C
  \leq - \frac c{D^{1+a'}}.
\end{align*}
\medskip

\noindent \textbf{Property \ref{t:Pn:stab}.} The proof is loosely based on that in \cite{VanMeursPeletierPozar22}. Since the statement of Property \ref{t:Pn:stab} is different and since the setting is more general, we give the proof in full detail. 

The idea of the proof is as follows. Sufficiently before $\tau_1$, i.e.\ on $[0, \tau_1 - \delta]$ for some fixed small $\delta > 0$, the limiting trajectories (i.e.\ $t \mapsto (t, x_i(t))$) remain separated from each other by a positive distance. Then, the singularity of $f$ at $0$ plays no role in the right-hand side of the ODE \eqref{Pn:ito:forces}, and it follows from standard ODE theory that the perturbed trajectories (i.e.\ $t \mapsto (t, x_i^\sigma(t))$) converge to those of $x_i$ as $\sigma \to 0$. In particular, for $\sigma$ small enough, the perturbed trajectories also remain separated, and thus $\bb^\sigma = \bb$ on $[0, \tau_1 - \delta]$. If no problems occur around collision times, then this argument also applies to the solutions on the intervals $[\tau_k + \delta, \tau_{k+1} - \delta]$.

It then remains to show that indeed no problems occur around collision times. More precisely, we show that in a time neighborhood $\cN_k$ of $\tau_k$, the trajectories $\bx^\sigma$ are close to those of $\bx$, and that there is a one-to-one correspondence between the limiting particles which annihilate at $\tau_k$ and the perturbed particles which annihilate during $\cN_k$. This becomes complex when three or more limiting particles collide at a collision point $(\tau_k, y)$. Then, the corresponding perturbed particles collide typically at \textit{several different} collision points $(\tau_\ell^\sigma, y_\ell^\sigma)$ which are all close to $(\tau_k, y)$. We have no control on \textit{which} of the perturbed particles collide; that depends in detail on the choice of $\bx^{\sigma,\circ}$. This explains the need for a $\sigma$-dependent relabeling of the perturbed particles. Yet, we can show that both the limiting and the perturbed trajectories have to remain close to $y$ during $\cN_k$ (in Figure \ref{fig:stab} below this means that they remain inside the trapezoids), and this gives enough control to prove the convergence of the perturbed trajectories even beyond $\cN_k$. 
\smallskip

Next we prove Property \ref{t:Pn:stab}. For this it is sufficient to prove Property \ref{t:Pn:stab} only up to some fixed time in between $\tau_1$ and $\tau_2$, which we take for convenience to be
\[
  T_* := \frac{\tau_1 + \tau_2}2.
\]
Indeed, if Property \ref{t:Pn:stab} holds up to $T_*$, then for some $\sigma$-dependent perturbation $\mu \in S_n$ which encodes the relabeling, we have, setting $\bx_\mu := (x_{\mu(i)})_{i=1}^n \in \R^n$, that  $\bx_\mu^\sigma (T_*) \to \bx(T_*)$ as $\sigma \to 0$ and $\bb_\mu^\sigma(T_*) = \bb(T_*)$ for $\sigma$ small enough. These properties are equivalent to those at initial time, and thus Property \ref{t:Pn:stab} can be established up to the end time $T$ by iterating over the collision times. Therefore, in what follows it is sufficient to consider $\tau := \tau_1$.

To prove Property \ref{t:Pn:stab} up to $T_*$ it is sufficient to show that for all $\delta > 0$ with $\delta < \min \{\tau, T_* - \tau \}$ there exist $\mu = \mu_\sigma \in S_n$ and $\sigma_0 > 0$ such that for all $\sigma \in (0,\sigma_0)$ the following five (in)equalities hold:
\begin{subequations} \label{pf:vw} 
  \begin{align} \label{pf:vwa}
  \| \bx^\sigma - \bx \|_{C([0, \tau])} &\leq \delta, \\\label{pf:vwb}
  \| \bx_\mu^\sigma - \bx \|_{C([\tau, T_*])} &\leq \delta, \\\label{pf:vwc}
  \bb^\sigma(\tau - \delta) &= \bb(\tau-), \\\label{pf:vwd}
  \bb_\mu^\sigma(\tau + \delta) &= \bb(\tau), \\\label{pf:vwe}
  \bb_\mu^\sigma(T_*) &= \bb(\tau).
\end{align}
\end{subequations}
Note that \eqref{pf:vwc}--\eqref{pf:vwe} imply that all collisions between perturbed particles up to time $T_*$ happen on the time interval $(\tau - \delta, \tau + \delta]$.
\smallskip

In the remainder of the proof we prove \eqref{pf:vw}. Let $\delta > 0$ with $\delta < \min \{\tau, T_* - \tau \}$ be given. We split the time interval $[0,T_*]$ at $\tau - r$ and $\tau + h$ for parameters $0 < r \leq h \leq \delta$ which we specify during the proof. To avoid circular dependence on the parameters, we will take $h$ small enough independently of $\sigma_0, r$ and take $r$ small enough independently of $\sigma_0$. 

On the first of the three time intervals, i.e.\ on $[0, \tau - r]$, we show that for all $\delta_1 \in (0, \delta]$ and for all $\sigma$ small enough that \comm{we will apply \eqref{pf:zoa} for both choices $\delta_1 = h, d_r$}
\begin{subequations} 
  \begin{align} \label{pf:zoa}
  \| \bx^\sigma - \bx \|_{C([0, \tau - r])} &\leq \delta_1, \\\label{pf:zob}
  \bb^\sigma(\tau - r) &= \bb(\tau-).
\end{align}
\end{subequations}
On $[0, \tau - r]$ the limiting particles remain separated by the distance
\begin{align*}
  d_r := \min_{0 \leq t \leq \tau - r} \min_{1 \leq i \leq n-1} (x_{i+1} - x_i)(t) > 0,
\end{align*}
i.e.\ $(\bx, \bb)$ remains in a compact subset of $\cZ_n$.  Since the right-hand side of the ODE \eqref{Pn:ito:forces} is Lipschitz continuous on any compact subset of $\cZ_n$, it follows from a standard application of Gronwall's lemma (relying on $\|\fext^\sigma - \fext\|_\infty \to 0$) that \eqref{pf:zoa} holds.  Applying \eqref{pf:zoa} for some $\delta_1 < \frac13 d_r$, we have for $\sigma$ small enough that also the perturbed particles $x_i^\sigma$ remain separated from each other by some distance large than $\frac13 d_r$. Hence, there are no collisions before or at $t = \tau - r$, and thus \eqref{pf:zob} holds. Since $r \leq \delta$, this implies \eqref{pf:vwc}.
\smallskip

Next we turn to the second time interval $[\tau - r, \tau + h]$. We show that for all $i$ both $x_i$ and $x_i^\sigma$ remain inside the trapezoid centred at $x_i(\tau)$; see Figure \ref{fig:stab}. The constants $C_0 > 1$ and $\rho > 0$ in Figure \ref{fig:stab} are independent of $\sigma, r, h, \delta$. In particular, 
\[
  \rho := \min \{ |x_i(\tau) - x_j(\tau)| : x_i(\tau) \neq x_j(\tau) \}
\]
is the minimal distance between the centres of any two trapezoids. We choose $C_0$ later.
We assume that
\begin{equation} \label{pf:vn}
  h \leq \frac{2\rho}{3 C_0}
\end{equation}
such that the trapezoids are further apart than $\frac13 \rho$. Finally, we take $R > 0$ such that all trajectories $x_i$ are contained in $(0,T_*) \times B_R$.

\begin{figure}[ht]
\centering
\includegraphics[height=3.6cm]{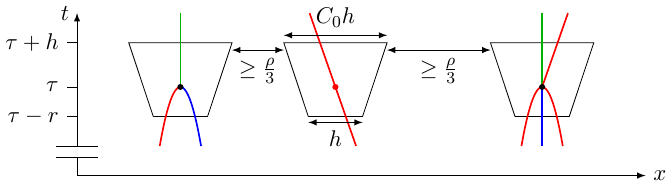}
\caption{Sketch of possible trajectories of the limiting particles $x_i$ around $\tau$. Each particle $x_i$ enters at $t = \tau - r$ the trapezoid around $x_i(\tau)$ and leaves the trapezoid at $t = \tau + h$. Green trajectories correspond to particles which become neutral after collision.}
\label{fig:stab}
\end{figure}

First, we establish an estimate on $\bx(t)$, which in particular implies that $x_i(t)$ remains inside its corresponding trapezoid. This estimate is 
\begin{align} \label{pf:vv}
  \max_{1 \leq i \leq n} |x_i(t) - x_i(\tau)| \leq \left\{ \begin{aligned}
    &\frac h4
    && \text{if } \tau - r \leq t < \tau \\
    &\frac{C_0}2 (t - \tau)
    && \text{if } \tau \leq t \leq \tau + h.
  \end{aligned} \right.
\end{align}
The estimate on $[\tau - r, \tau)$ follows from the continuity of $\bx$ on $[0,T]$, and holds when $r$ is small enough with respect to $h$. The estimate on $[\tau, \tau + h]$ is trivial for particles which are annihilated at $\tau$, because for those particles $x_i(t) = x_i(\tau)$. Since the surviving particles on $[\tau, T_*]$ remain separated by a positive distance $d_1 > 0$, we have that $\bx$ is Lipschitz continuous on $[\tau, T_*]$, and thus \eqref{pf:vv} holds for a sufficiently large $C_0$.

Second, we show that for $\sigma$ small enough also the perturbed particles $x_i^\sigma(t)$ remain inside the trapezoid centred at $x_i(\tau)$, and that for all perturbed particles which enter the same trapezoid, at most one can survive. In preparation for showing this, we divide the limiting particles $x_i$ into clusters $I_1, \ldots, I_L$ (similar to the construction in the proof for the existence of solutions) of particles which enter the same trapezoid, i.e.
\[
  x_i(\tau) = x_j(\tau)
  \iff
  \exists \, 1 \leq l \leq L : i,j \in I_l.
\]
For each $l$ we set
\[
  y_l := x_i(\tau) \quad \text{for some } i \in I_l.
\]
If $|I_l| \geq 2$, then all limiting particles with index in $I_l$ collide at the collision point $(\tau, y_l)$. For each $I_l$ we will prove (sequentially) that for $\sigma$ small enough
\begin{subequations} \label{pf:zp}
  \begin{align} \label{pf:zpa}
  \max_{\tau - r \leq t \leq \tau + h} \max_{i \in I_l} |x_i^\sigma(t) - y_l| 
  &\leq \frac{C_0}2 h, \\\label{pf:zpb}
  \sum_{i \in I_l} | b_i^\sigma(\tau + h) | &\leq 1.
\end{align}
\end{subequations}
The motivation for proving \eqref{pf:zpb} is as follows. Assume that \eqref{pf:zpa} holds. If $|I_l|$ is even, then \eqref{pf:zpb} implies by the local charge conservation in Definition \ref{d:Pn}\ref{d:Pn:ann:rule} that $b_i^\sigma(\tau + h) = 0$ for all $i \in I_l$, and therefore $b_i^\sigma(\tau + h) = b_i(\tau)$. In this case, no relabeling is necessary. If $|I_l|$ is odd, then from a similar reasoning we obtain that there is precisely one index $i \in I_l$ for which $b_i(\tau) \neq 0$ and precisely one index $j \in I_l$ for which $b_j^\sigma(\tau + h) \neq 0$. Since by \eqref{pf:zob} $b_k^\sigma(\tau - r) = b_k(\tau -)$ for all $k \in I_l$, we have that $b_j^\sigma(\tau + h) = b_i(\tau)$. Based on this we take $\mu \in S_n$ such that $\mu(i) = j$ and $\mu(k) \in I_l$ for all $k \in I_l$. By putting this together, we obtain 
\begin{equation} \label{pf:vu}
  \bb_\mu^\sigma(\tau + h) = \bb(\tau).
\end{equation}
We remark that this almost implies \eqref{pf:vwd} and \eqref{pf:vwe}; the only statement left to prove afterwards is that on the time interval $[\tau + h, T_*]$ no collisions happen.

Next we prove \eqref{pf:zp}. Applying \eqref{pf:zoa} with $\delta_1 = \frac14 h$ we have for $\sigma$ small enough that
\begin{subequations} 
  \begin{align} 
   \label{pf:zlb}
   \max_{1 \leq i \leq n} |x_i^\sigma - x_i|(\tau - r) 
   &\leq \frac h4, \\\label{pf:zlc}
   \| \fext^\sigma - \fext \|_\infty 
   &\leq 1.
\end{align}
\end{subequations} 
Together with \eqref{pf:vv} this implies that the perturbed particles enter their corresponding trapezoid at $\tau - r$, i.e.
\begin{equation*} 
  \max_{1 \leq i \leq n} |x_i^\sigma (\tau - r) - x_i(\tau)| 
   \leq \frac h2.
\end{equation*}
Let
\begin{equation} \label{pf:wg}
  T^\sigma 
  := \inf \Big\{ t > \tau - r \, \Big| \, \max_{1 \leq i \leq n} \big| x_i^\sigma(t) - x_i(\tau) \big| > \frac \rho3 \Big\}
\end{equation} 
be the first time at which a perturbed particle is further away from the center of its trapezoid than $\frac13 \rho$. By this definition, during $[\tau - r, T^\sigma]$ perturbed particles from different trapezoids are separated by at least $\frac13 \rho$. Later it will turn out that $T^\sigma \geq \tau + h$, which in particular implies that all perturbed particles leave their trapezoid at $t = \tau + h$. For now, by taking 
$
  h < \frac23  \rho
$
we have at least that $T^\sigma > \tau - r$. 

Next we fix $l$ and set $I_l = \{k, \ldots, \ell\}$. Since the case $k=1$ or $\ell = n$ can be treated with a simplification to the argument that follows, we assume for convenience that $k \geq 2$ and $\ell \leq n-1$. 
We split two cases: $\ell = k$ and $\ell \geq k + 1$. In the first case, the cluster $I_l$ contains only one perturbed particle, namely $x_k^\sigma$. 
Since $x_k^\sigma$ remains separated from the other perturbed particles on $[\tau - r, T^\sigma]$ by a distance of at least $\frac13 \rho$, we obtain from the ODE \eqref{Pn:ito:forces} and the monotonicity of $f$ that (recall \eqref{pf:zlc})
\begin{equation*}
  \Big| \frac d{dt} x_k^\sigma \Big|
  \leq \frac{n-1}n f \Big( \frac \rho3 \Big) + \sup_{B_{R + \rho}} | \fext^\sigma |
  \leq f \Big( \frac \rho3 \Big) + \sup_{B_{R + \rho}} | \fext | + 1 
  =: C_1
  \quad \text{on } (\tau - r, T^\sigma)
\end{equation*}
for $\sigma$ small enough with respect to $\rho$.
Thus, for all $t \in [\tau - r, \tau + h]$ with $t < T^\sigma$ we have (recall $r \leq h$)
\begin{align} \notag
  \big| x_k^\sigma(t) - y_l \big|
  &\leq \big| x_k^\sigma(t) - x_k^\sigma(\tau - r) \big| + \big| x_k^\sigma(\tau - r) - y_l \big| \\\label{pf:zg}
  &\leq \int_{\tau - r}^t \Big| \frac{d x_k^\sigma }{dt}  (s) \Big| \, ds + \frac h2
  \leq (t - (\tau - r)) C_1 + \frac h2
  \leq C_2 h.
\end{align}
Hence, by taking $C_0 \geq 2 C_2$ we have that $x_k^\sigma$ remains inside the trapezoid at $y_l$ at least until $T^\sigma$.

Next we treat the second case $\ell \geq k + 1$ where there are two or more perturbed particles in the cluster $I_l$. Let $S_l^\sigma := \{\tau_1^\sigma, \ldots, \tau_{K^\sigma}^\sigma \}$ be the collision times of the perturbed particles in $I_l$. For $t < \tau_1^\sigma$ we apply a similar computation as for $|I_l| = 1$ to obtain one-sided bounds on the outer two particles $\frac d{dt} x_k^\sigma$ and $\frac d{dt} x_\ell^\sigma$. For $\frac d{dt} x_\ell^\sigma$ we get, using again the monotonicity of $f$, that
\begin{align} \label{pf:ze} 
  \frac{ d x_\ell^\sigma}{dt} 
  &\leq \frac1n \sum_{j = k}^{\ell - 1} (-1)^{\ell - j} f(x_{\ell}^\sigma - x_{j}^\sigma)
  + \frac1n \sum_{j \notin I_l} f \Big( \frac \rho3 \Big) + \sup_{B_{R + \rho}} | \fext^\sigma | \\\label{pf:yz}
  &\leq 0 + f \Big( \frac \rho3 \Big) + \sup_{B_{R + \rho}} | \fext | + 1 
  = C_1
  \qquad \text{on } \big(\tau - r, \min \{T^\sigma, \tau_1^\sigma \} \big).
\end{align} 
 Then, similar to \eqref{pf:zg} we obtain 
\begin{equation} \label{pf:zf}
  x_i^\sigma(t) - y_l
  \leq x_\ell^\sigma(t) - y_l
  \leq C_2 h
  \quad \text{for all } \tau - r \leq t \leq \min \{T^\sigma, \tau_1^\sigma, \tau + h \}
\end{equation}
for all $i \in I_l$. 

The estimate in \eqref{pf:zf} also holds beyond collisions, i.e.\ \eqref{pf:zf} also holds when $\tau_1^\sigma$ is removed from the minimum. To show this, we briefly recall the corresponding argument from the proof of \cite[Theorem 2.4(vi)]{VanMeursPeletierPozar22}. The argument goes by iteration over the collision times $S_l^\sigma$. If at some collision time $\tau_j^\sigma$ particles other than $x_\ell^\sigma$ are annihilated, then the signs of the surviving particles remain alternating, and thus the first sum in \eqref{pf:ze} remains nonpositive. Hence, \eqref{pf:zf} holds beyond such collision. If $x_i^\sigma$ itself gets annihilated at $\tau_j^\sigma$, then the left-hand side in \eqref{pf:zf} is constant for all $t \geq \tau_j^\sigma$, and thus $x_i^\sigma(t) - y_l \leq C_2 h$ remains to hold. Finally, if $x_\ell^\sigma$ gets annihilated at $\tau_j^\sigma$, then we continue from the rightmost charged particle $x_m^\sigma$ in $I_l$, for which the estimates \eqref{pf:ze}, \eqref{pf:yz} and \eqref{pf:zf} hold beyond $\tau_j^\sigma$. 

An analogous treatment for $x_k^\sigma$ yields 
\begin{equation} \label{pf:yy}
  x_i^\sigma(t) - y_l 
  \geq - C_2 h
  \quad \text{for all } \tau - r \leq t \leq \min \{T^\sigma, \tau + h \}
\end{equation}
for all $i \in I_l$. Combining this with \eqref{pf:zf} we observe from \eqref{pf:vn} and \eqref{pf:wg} that the perturbed particles in $I_l$ do not move further away from $y_l$ than $\frac13 \rho$. Since $l$ was arbitrary, we obtain $T^\sigma \geq \tau + h$. Then, \eqref{pf:zf} and \eqref{pf:yy} imply \eqref{pf:zpa}.  
 
Next we prove \eqref{pf:zpb}. Again, we fix $l$ and write $I_l = \{k, \ldots, \ell\}$. If $\ell = k$, then \eqref{pf:zpb} is obvious. For $\ell > k$, we apply a similar computation as for $D$ in the proof of Property \ref{t:Pn:LB:col}. As in that proof, we focus on the difficult case where $\ell - k$ is even (i.e.\ $|I_l|$ is odd).

Similar to \eqref{pf:vm}, but now for the perturbed particles, we take
\[
  D^\sigma(t) := \max \big\{ |x_i^\sigma(t) - x_j^\sigma(t)| \: \big| \: i,j \in I_l \text{ and } b_i^\sigma(t) b_j^\sigma(t) \neq 0 \big\}
  \qquad \text{for } t \in [\tau - r, \tau + h].
\]
This expression is more involved than in \eqref{pf:vm}, in which case $D^\sigma = x_\ell^\sigma - x_k^\sigma$. While $D^\sigma = x_\ell^\sigma - x_k^\sigma$ holds initially, particle $x_\ell^\sigma$ or particle $x_k^\sigma$ may collide during $(\tau - r, \tau + h]$. 

We prove \eqref{pf:zpb} by contradiction. Suppose that the sum in \eqref{pf:zpb} is larger than $1$. Then, $D^\sigma(\tau + h) > 0$. To reach a contradiction, we first consider $D^\sigma$ on $[\tau - r, \tau_1^\sigma]$. Then, we can repeat the same computation as for $D$ in the proof of Property \ref{t:Pn:LB:col} (relying on the fact that perturbed particles from different trapezoids remain separated) to obtain
\begin{equation*} 
  \frac{ d D^\sigma }{dt} \leq \frac{1}{n} D^\sigma f'(D^\sigma) + C_3
  \qquad \text{on } (\tau - r, \tau_1^\sigma)
\end{equation*}
for some constant $C_3 > 0$ which does not depend on $\sigma, \sigma_0, r, h$. Since by \eqref{pf:zlb} we have 
$
  D^\sigma (\tau - r) 
  \leq h,
$
it follows from Assumption \ref{a:fg}\ref{a:fg:singLB} that $\frac d{dt} D^\sigma(t) \leq -1$ on $(\tau - r, \tau_1^\sigma)$ for $h$ small enough with respect to $f$ and $C_3$. In fact, we claim that $\frac d{dt} D^\sigma(t) \leq -1$ on $(\tau_i^\sigma, \tau_{i+1}^\sigma)$ for any $i$ for which $\tau_i^\sigma < \tau + h$ and that $D^\sigma (\tau_i^\sigma +) \leq D^\sigma (\tau_i^\sigma -)$ for any $1 \leq i \leq K^\sigma$. The proof for this claim is similar to the argument used below \eqref{pf:zf}; see \cite{VanMeursPeletierPozar22} for details. From this claim we conclude that $D^\sigma(t) \leq [h - (t - (\tau - r))]_+$. In particular, this implies $D^\sigma(\tau - r + h) = 0$, which contradicts with $D^\sigma(\tau + h) > 0$. Hence, \eqref{pf:zpb} follows.  
\smallskip

Finally, using that both the limiting and the perturbed particles stay inside their related trapezoids (see \eqref{pf:vv} and \eqref{pf:zp}), we prove the desired (in)equalities in \eqref{pf:vw}. We already saw that \eqref{pf:zob} implies \eqref{pf:vwc}. Now, a direct consequence of \eqref{pf:vv} and \eqref{pf:zp} is 
\begin{equation} \label{pf:vt}
  \| \bx^\sigma - \bx \|_{C([\tau - r, \tau + h])} \leq C_0 h
  \qand
  \| \bx_\mu^\sigma - \bx \|_{C([\tau - r, \tau + h])} \leq C_0 h,
\end{equation}
where we recall that the permutation $\mu$ only permutes the indices of particles which belong to the same trapezoid. Together with \eqref{pf:zoa} with $\delta_1 = h$, this implies \eqref{pf:vwa}  when taking $h < \delta / C_0$. Moreover, \eqref{pf:vwb} holds up to time $\tau + h$. 

To conclude the remaining part of \eqref{pf:vw}, we need to show that the perturbed particles remain close enough to the limiting particles on $[\tau + h, T_*]$. Since the annihilated particles remain stationary at their collision point inside their trapezoid, we have for all $i$ with $b_i(\tau) = 0$ that $|x_{\mu(i)}^\sigma (t) - x_i(t)| \leq C_0 h$ for all $t \geq \tau + h$. Hence, we may neglect the annihilated particles and focus only on the surviving ones. Doing so, the limiting particles remain separated by a positive distance $d_1 > 0$ (independent of $\sigma, r, h, \delta$) on $[\tau, T_*]$. Hence, if the perturbed particles remain close enough to the limiting particles on  $[\tau, T_*]$, then the remaining (in)equalities in \eqref{pf:vw} follow from \eqref{pf:vu}. To prove that the perturbed particles remain arbitrarily close to the limiting ones, consider \eqref{pf:vt} at $t = \tau + h$, i.e.
\begin{equation} \label{pf:vs}
  |\bx_\mu^\sigma - \bx|(\tau + h) \leq C_0 h.
\end{equation}
Then, applying Gronwall's lemma to the ODEs for $\bx$ and $\bx_\mu^\sigma$ starting at time $\tau + h$, we obtain
\[
  \| \bx_\mu^\sigma - \bx \|_{C([\tau + h, T_*])} \leq \delta_h + \delta_\sigma
\]
for some constants $\delta_h > 0$ (coming from the contribution of \eqref{pf:vs}) and $\delta_\sigma > 0$ (coming from the contribution of \eqref{pf:zlc}) which vanish as $h \to 0$ and $\sigma \to 0$ respectively. Hence, by choosing first $h$ small enough such that $\delta_h \leq \frac12 \min \{ \delta, d_1 \}$ and then $\sigma$ small enough such that $\delta_\sigma \leq \frac12 \min \{ \delta, d_1 \}$, we obtain that both \eqref{pf:vwb} holds and that the perturbed particles do not collide before or at $T_*$. From the latter and \eqref{pf:vu} we obtain \eqref{pf:vwd} and \eqref{pf:vwe}. This completes the proof of Property \ref{t:Pn:stab}.
\end{proof} 

\newpage

\section{Definitions of \eqref{Pn} and ($P^m$) in terms of viscosity solutions}
\label{s:HJ}
 
In this section we give a proper meaning to ($P^m$) for $m=1,2,3$, which we have formally stated in Section \ref{s:intro:aim}. We state this proper meaning in terms of viscosity solutions to integrated versions of these equations. These integrated versions resemble Hamilton-Jacobi equations; see \eqref{HJk} below. As preparation for proving the convergence of \eqref{Pn}, we also introduce viscosity solutions to an integrated version \eqref{HJe} of \eqref{Pn} with $\e = \frac1n$. We do not adopt the usual definition of viscosity solutions for \eqref{HJk} and \eqref{HJe}, because it can be too strong for obtaining existence of solutions.
\comm{HJ$_n$ and \eqref{HJ1} can be called HJ eqns (in a broad sense; classical HJ eqns need the right-hand side to depend only on $u_x(t,x)$), but \eqref{HJk} cannot because of $u_{xx}$ } 

The structure of this section is as follows. In Section \ref{s:HJ:not} we introduce the notation. In Section \ref{s:HJ:ass} we list the assumptions on $V$ and their consequences. In Section \ref{s:HJ:int} we formally state the integrated equations \eqref{HJk} and \eqref{HJe}. In Sections \ref{s:HJ:HJe},  \ref{s:HJ:HJ1} and \ref{s:HJ:HJ23} we define viscosity solution concepts with a comparison principle to these integrated equations, and consider the spatial derivative of these solutions as the solutions to \eqref{P1}, \eqref{P2} and \eqref{P3}. In particular, the comparison principles imply uniqueness of viscosity solutions. Existence of solutions will be a direct corollary of Theorem \ref{t}.

\subsection{Notation}
\label{s:HJ:not}

In addition to the notation introduced at the start of Section \ref{s:ODE}, we introduce the following:
\begin{itemize}
  \item For any $x \in \R$ and any $\rho > 0$, we set $B_\rho(x) := (x - \rho, x + \rho) \subset \R$ as the one-dimensional ball. We further set $B_\rho := B_{\rho}(0)$;
  \item For any $T > 0$, we set $Q_T := (0,T) \times \R$. Moreover, we set $Q := (0,\infty) \times \R$;
  \item We set $C_b(\o Q)$ as the space of continuous and bounded functions on $\o Q$, and
  $BUC(\o Q) \subset C_b(\o Q)$ as the space of bounded and uniformly continuous functions on $\o Q$;
  \item For a function of several variables such as $u = u(t,x)$ we write its partial derivatives as $u_t$ and $u_x$;
  \item $u_*$ and $u^*$ are respectively the lower and upper semi-continuous envelope with respect to all variables of the function $u$;
  \item For a sequence of upper semi-continuous functions $u_\e : Q \to \R$ we set 
  $$ \text{lim\,sup}^* \, u_\e (t,x) := \limsup_{(s,y,\e) \to (t,x,0)} u_\e(s,y)$$
  for each $(t,x) \in \o Q$. Similarly, we define $\liminf_* u_\e$ for lower semi-continuous functions.
\end{itemize} 

\subsection{Asumptions on $U$ and $V$}
\label{s:HJ:ass}

Throughout Sections \ref{s:HJ} and \ref{s:conv} we put the sufficient Assumption \ref{a:UV} on $U$ and $V$. It is chosen such that all results in Sections \ref{s:HJ} and \ref{s:conv} hold. See Section \ref{s:disc:weak:V} for weakened assumptions.

To state Assumption \ref{a:UV} we introduce
\begin{equation*} 
  \V_k(x) := \sup_{y \in (x,\infty)} \big| V^{(k)}(y) \big| \qquad \text{for any } k \in \N.
\end{equation*}

\begin{ass}[$U$ and $V$] \label{a:UV}
$U \in C^1(\R)$ is such that $U'$ is Lipschitz continuous. $V$ satisfies:
\begin{enumerate}[label=(\roman*)]
  \item $V : \R \setminus \{0\} \to \R$ is even;
  \item $V \in C^5((0,\infty))$;
  \item \label{a:UV:cv} $V'' \geq 0$ on $(0,\infty)$;
  \item \label{a:UV:to0} $V^{(k)}(x) \to 0$ as $x \to \infty$ for $k=0,\ldots,4$;
  \item \label{a:UV:int} $x \mapsto x^5 \V_5(x)$ is in $L^1 (0,\infty)$.
\end{enumerate}
\end{ass}

The main assumptions are the convexity in \ref{a:UV:cv} and the integrability in \ref{a:UV:int}. We consider \ref{a:UV:to0} as a supplement to \ref{a:UV:int} to determine the integration constants.
Note that \ref{a:UV:int} hardly limits the strength of the singularity and the decay of the tails of $V$ beyond integrability. Instead, it limits oscillations in the derivatives up to fifth order.

The following lemma lists several consequences of Assumption \ref{a:UV}.

\begin{lem} \label{l:UV}
If $V$ satisfies Assumption \ref{a:UV}, then
\begin{enumerate}[label=(\roman*)]
  \item \label{l:UV:mon} $(-1)^k V^{(k)} \geq 0$ on $(0,\infty)$ for $k = 0,1,2$;
  \item \label{l:UV:L1} $x \mapsto x^k \V_k(x)$ is in $L^1 (0,\infty)$ for $k = 0,\ldots,5$; 
  \item \label{l:UV:to0} $x^{k+1} \V_k (x) \to 0$ both as $x \to 0$ and as $x \to \infty$ for $k = 0,\ldots,4$;
  \item \label{l:UV:intx} $\displaystyle x \int_x^\infty y^{k-1} \V_k(y) \, dy \to 0$ as $x \to 0$ for $k = 1,\ldots,5$.
\end{enumerate} 
\end{lem}

\begin{proof} Lemma \ref{l:UV} can be proven by backward iteration over $k$. We demonstrate this by showing the first iteration step. We start with Property \ref{l:UV:mon}. The case $k=2$ is given. We obtain the case $k=1$ from Assumption \ref{a:UV}\ref{a:UV:cv},\ref{a:UV:to0} by
\[
  -V'(x) = \int_x^\infty V''(y) \, dy \geq 0.
\]

Next we prove Property\ref{l:UV:intx} for $k=5$. We write
\[
  x \int_x^\infty y^4 \V_5(y) \, dy
  = \int_0^\infty \psi_x(y) y^5 \V_5(y) \, dy,
\]
where $\psi_x : (0,\infty) \to \R$ is defined by $\psi_x(y) = 0$ for $y < x$ and $\psi_x(y) = \frac xy$ for $y > x$. Note that $\| \psi_x \|_\infty \leq 1$ and that $\psi_x(y) \to 0$ as $x \to 0$ pointwise in $y$. Then, by Assumption \ref{a:UV}\ref{a:UV:int} and the Dominated Convergence Theorem, Property \ref{l:UV:intx} for $k=5$ follows.

To prove Properties \ref{l:UV:L1} and \ref{l:UV:to0} for $k=4$, let $W(x) := \int_x^\infty \V_5(y) \, dy$ for $x > 0$. Clearly, $W \geq 0$ is non-increasing and $W(x) \to 0$ as $x \to \infty$. Using from Assumption \ref{a:UV}\ref{a:UV:to0} that $V^{(4)}(x) \to 0$ as $x \to \infty$, we obtain the lower bound 
\[
  W(x) 
  = \int_x^\infty \V_5(y) \, dy
  \geq \int_x^\infty |V^{(5)}(y)| \, dy
  \geq \bigg| \int_x^\infty V^{(5)}(y) \, dy \bigg|
  = |V^{(4)}(x)|
\]
for all $x > 0$. Then,
\begin{equation} \label{pf:us}
  \V_4(x)
  = \sup_{(x,\infty)} |V^{(4)}|
  \leq \sup_{(x,\infty)} W
  = W(x)
\end{equation}
for all $x > 0$.

Using \eqref{pf:us}, Property \ref{l:UV:L1} for $k=4$ follows from
\begin{align*}
  \int_0^\infty x^4 \V_4(x) \, dx
  \leq \int_0^\infty x^4 W(x) \, dx
  = \int_0^\infty x^4 \int_x^\infty \V_5(y) \, dy dx
  = \frac15 \int_0^\infty y^5 \V_5(y) \, dy
  < \infty.
\end{align*}
Also, Property \ref{l:UV:to0} for $k=4$ and $x \to \infty$ follows from
\begin{align*}
  x^5 \V_4(x)
  \leq x^5 W(x)
  \leq \int_x^\infty y^5 \V_5(y) \, dy
  \xto{x \to \infty} 0.
\end{align*}
Finally, take any $0 < x < \delta$ and split
 \begin{align*}
  x^5 \V_4(x)
  \leq x^5 \int_x^\infty \V_5(y) \, dy
  &= \int_x^\delta x^5 \V_5(y) \, dy + x^5 \int_\delta^\infty \V_5(y) \, dy \\
  &\leq \int_0^\delta y^5 \V_5(y) \, dy + \frac{x^5}{\delta^5} \int_0^\infty y^5 \V_5(y) \, dy.
\end{align*}
Then, property \ref{l:UV:to0} for $k=4$ and $x \to 0$ follows by first taking $x \to 0$ and then $\delta \to 0$.
\end{proof}

In preparation for what follows, we prove in Lemma \ref{l:f3} below that the infinite sum in \eqref{Psi} is well-defined, i.e.\ that 
\begin{equation} \label{f3}
  f_3(y) = \left\{ \begin{aligned}
    &\sum_{k=1}^\infty \beta^3 \frac{k^2}{y^2} V'' \Big( \beta \frac ky \Big)
    &&\text{if } y \neq 0  \\
    &0
    &&\text{if } y = 0.
  \end{aligned} \right.  
\end{equation}
is well-defined for each $y \in \R$.
We also establish several properties of $f_3$. 

\begin{lem}[Properties of $f_3$] \label{l:f3}
Let $\beta > 0$. The series in \eqref{f3} converges uniformly on any compact subset of $\R \setminus \{0\}$. Moreover, $f_3$ is locally Lipschitz continuous on $\R$, and $f_3 \geq 0$.
\end{lem}

\begin{proof}
Without loss of generality we assume $\beta = 1$. First, we prove that the series in \eqref{f3} converges uniformly on $[\frac1R, R]$ for any $R > 1$. For any integers $1 \leq \ell < n$ and any $x \in [\frac1R, R]$, we estimate
\begin{multline} \label{pf:wj}
  0
  \leq \sum_{k=\ell+1}^n k^2 V'' (kx) 
  \leq  \sum_{k=\ell+1}^n \frac1x \int_{(k-1)x}^{kx} (z+x)^2 \V_2(z) \, dz \\
  = \frac1x \int_{\ell x}^{nx} (z+x)^2 \V_2(z) \, dz
  \leq R \int_{\ell/R}^{Rn} ( z + R )^2 \V_2(z) \, dz,
\end{multline}
which vanishes as $\ell,n \to \infty$ uniformly in $x$ since $y \mapsto y^2 \V_2(y)$ is in $L^1(1,\infty)$. Since $V$ is even and $V'' \geq 0$, it follows that the series in \eqref{f3} converges uniformly on any compact subset of $\R \setminus \{0\}$. Then, it follows from $V \in C^2((0,\infty))$ and $V'' \geq 0$ that $f_3 \in C(\R \setminus \{0\})$ and that $f_3 \geq 0$. The continuity at $y=0$ follows from a similar computation as in \eqref{pf:wj}: \comm{Kosmala T843: $f_3 \in C$ if uf conv + summands $\in C$}
\begin{align} \notag
  f_3(y) 
  &\leq \frac1{y^2} V'' \Big( \frac 1y \Big) + y \int_{1/y}^\infty \Big( z + \frac1y \Big)^2 \V_2(z) \, dz \\ \label{pf:vk}
  &\leq y \frac1{y^3} \V_2 \Big( \frac 1y \Big) + 4 y \int_0^\infty z^2 \V_2(z) \, dz
  \leq Cy
  \qquad \text{for all } y \in (0,1),
\end{align}
where we have used that $x^3 \V_2(x) \to 0$ as $x \to \infty$.

Next we prove that $f_3$ is Lipschitz continuous on $[\frac1R, R]$ for any $R > 1$. First, we show that
\begin{equation} \label{pf:wi}
  f_3'(y) = -\sum_{k=1}^\infty \bigg( 2 \frac{k^2}{y^3} V'' \Big( \frac k y \Big) + \frac{k^3}{y^4} V''' \Big( \frac k y \Big) \bigg)
  \qquad \text{for all } y \neq 0,
\end{equation}
where the summand is the derivative of the summand in \eqref{f3}.
Since the series in \eqref{f3} converges, it is sufficient to show that the series in \eqref{pf:wi} converges in $C([\frac1R, R])$. We show this separately for both terms in \eqref{pf:wi}. The series corresponding to the first term is essentially the same as the series in \eqref{f3}, for which we have already established the uniform convergence. For the second term, we use that $V \in C^4 ((0,\infty))$ and that $x^3 \V_3(x)$ is in $L^1(1,\infty)$ to estimate for any $1 < \ell < n$ and any $y \in [\frac1R, R]$ \comm{Kosmala T8.4.17; $V \in C^3$ needed st each finite series is cts}
\[ 
  \sum_{k=\ell+1}^n k^3 \Big| V''' \Big( \frac ky \Big) \Big|
  \leq \int_{\ell/y}^{n/y} \Big( z + \frac1y \Big)^3 \V_3(z) \, dz
  \leq \int_{\ell/R}^\infty ( z + R )^3 \V_3(z) \, dz,
\]
which vanishes as $\ell,n \to \infty$ uniformly in $y$. 

Finally, we show that $f_3'(y)$ is bounded around $y=0$. For the first term in \eqref{pf:wi}, this follows from a similar estimate as in \eqref{pf:vk} (note the additional factor $\frac1y$). The second term can be estimated by a similar estimate too, this time by relying on $x^3 \V_3(x)$ being in $L^1(1,\infty)$.
\end{proof}

\subsection{The integrated PDEs formally}
\label{s:HJ:int}

Formally integrating the PDEs ($P^m$) in space and substituting $u_x = \kappa$ yields the PDEs
\begin{subequations} 
\begin{align}  \label{HJ1} \tag{$HJ^1$}
  u_t &= ( \cM[u] + U' ) |u_x| && \text{if } m = 1, \\ \label{HJk} \tag{$HJ^m$} 
  u_t &= f_m(u_x) u_{xx} + U' |u_x| && \text{if } m =  2,3,
\end{align}  
\end{subequations}
where $\cM[u] = V_\alpha' * u_x$, $f_2(\kappa) = \|V\|_{L^1(\R)} |\kappa|$ and $f_3$ is given by \eqref{f3}.
We will use the more convenient form of the operator $\cM$ given formally by
\begin{align*} 
  \cM[v](x)
  := \pv \int_\R [v(x+z) - v(x)] V_\alpha''(z) dz
\end{align*}
for any $x \in \R$ and any $v : \R \to \R$, which can formally be thought of as $V_\alpha'' * v$. We note that, even for $v \in C_c^\infty(\R)$, $\cM [v](x)$ may not be defined at each $x$ when the singularity of $V$ is strong enough (consider e.g.\ the case where $v$ has a local minimum at $x$).  

To `integrate' \eqref{Pn}, we require some preparation. Given $(\bx, \bb) \in \cZ_n$ (recall \eqref{Zn}), we set
\begin{equation} \label{un}
  u_n : \R \to \R, \qquad 
  u_n(x) := \frac1n \sum_{i=1}^n b_i H(x - x_i)
\end{equation}
as a piecewise constant function, where $H$ is the Heaviside function. Recall that $u_n' = \kappa_n$ is the signed empirical measure related to $(\bx, \bb)$.
Let $v \in C^1(\R)$ be slightly above $u_n$ such that
\[
  u_n^* \leq v \leq u_{n,*} + \frac1n,
\]
where we recall that an asterisk denotes the upper or lower semi-continuous envelope.

Now, given a solution $(\bx, \bb)(t)$ to \eqref{Pn}, let $u_n(t,x)$ and $v \in C^1(Q)$ be as above.
Writing out $\frac d{dt} v(t, x_i(t)) = 0$ (see \cite[Lemma 4.2]{VanMeursPeletierPozar22} for a detailed computation) yields
\begin{equation} \label{HJe} \tag{$HJ_\e$} 
  v_t = ( \cM_\e[v] + U' ) |v_x|, \qquad \e := \frac1n,
\end{equation}
where $\cM_\e$ is given formally for any $u : \R \to \R$ by
\begin{equation} \label{cMe}
  \cM_\e[u](x)
  := \pv \int_\R E_\e [u(x+z) - u(x)] V_{\alpha_\e}''(z) dz,
\end{equation}
$\alpha_\e = \alpha_n$ (see \eqref{alphaem}) and
\[
  E_\e : \R \to \R, \qquad E_\e [\gamma] := \e \Big( \Big\lfloor \frac\gamma\e \Big\rfloor + \frac12 \Big)
\]
is illustrated in Figure \ref{fig:Ee}. The expression of $\cM_\e$ has even more issues than that of $\cM$, and this requires care when defining viscosity solutions.

\begin{figure}[ht]
\centering
\begin{tikzpicture}[scale=1]
    \draw[->] (-3,0) -- (3,0) node[right]{$\gamma$};
    \draw[->] (0, -3) -- (0,3);
    
    \draw[dotted] (1,1.5) --++ (0,-1.5) node[below]{$\e$}; 
    \draw[dotted] (2,2.5) --++ (0,-2.5) node[below]{$2\e$};
    \draw[dotted] (1,1.5) --++ (-1,0) node[left]{$\frac32 \e$}; 
    \draw[dotted] (2,2.5) --++ (-2,0) node[left]{$\frac52 \e$};
    \draw (0,.5) node[left]{$\frac12 \e$};  
    
    \draw[dashed] (-3, -3) --++ (6,6);
    \foreach \k in {-3,-2,-1,0,1,2}{    
      \draw[very thick] (\k, \k +0.5) --++ (1,0);
      \draw[fill=black] (\k, \k + 0.5) circle[radius=2pt];
      \draw[fill=white] (\k + 1, \k + 0.5) circle[radius=2pt];
    }
    \draw (2,1.5) node[right]{$E_\e$};
\end{tikzpicture} \\
\caption{Plot of the staircase approximation $E_\e$ of the identity.}
\label{fig:Ee}
\end{figure}
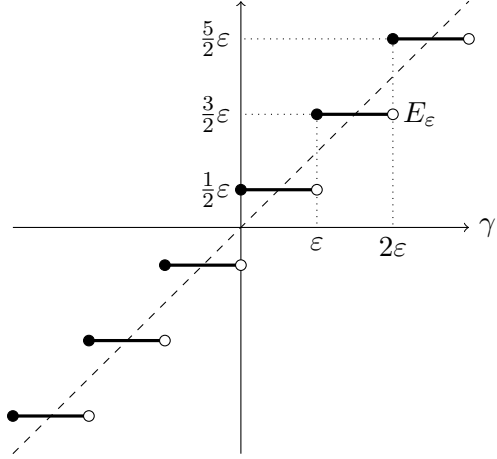 

When working with \eqref{HJe} it is more natural to switch from $n$ to the parameter $\e$, and consider any $\e > 0$. As a result, we replace $\alpha_n$ by 
\begin{align} \label{alphaem}
  \begin{cases}
    \alpha_\e \to \alpha
    &\text{if } m=1 \\
    1 \ll \alpha_\e \ll \e^{-1}
    &\text{if } m=2 \\
    \e \alpha_\e \to \beta
    &\text{if } m=3
  \end{cases} 
\end{align}  
as $\e \to 0$, where $\alpha, \beta > 0$ are the same as for $\alpha_n$ in \eqref{alphanm}.
 
We consider \eqref{HJe} as a more general equation than \eqref{Pn}. Indeed, for each solution $(\bx, \bb)(t)$ to \eqref{Pn} we show in Lemma \ref{l:Pn:to:HJe} that the corresponding function $u_n$ is a viscosity solution to \eqref{HJe}. Proposition \ref{p:Pn:to:HJe:un} provides the sense in which this viscosity solution is unique. In Section \ref{s:disc:HJe} we further discuss the connection between \eqref{HJe} and \eqref{Pn}.

We do not rigorously address the question to which extent viscosity solution of \eqref{HJe} correspond to solutions of \eqref{Pn}. Formally, the reconstruction is as follow. Given $v(t,x)$, consider the union of the level sets of $v$ at $\e \Z$, i.e.
\[
  \Big\{ (t,x) \in Q : \frac1\e v(t,x) \in \Z \Big\}.
\]
Assume that this set can be written as the union of graphs $t \mapsto (t,x_i(t))$ over $i$ (consider e.g.\ Figure \ref{fig:trajs} for an example). The signs $b_i$ are formally defined as $b_i = \sign( v_x(t,x) )$ at a point $(t,x)$ on a trajectory. Then, for any $t$ and any $i$ at which $x_i(t)$ is differentiable, a computation similar to the derivation above of \eqref{HJe} from \eqref{Pn} shows that $x_i$ at $t$ satisfies \eqref{Pn}. 

Next we compare \eqref{HJk} and \eqref{HJe} with those in \cite{VanMeursPeletierPozar22} where $\alpha_\e = \alpha = 1$, $U=0$ and $V(x) = -\log |x|$. In \cite{VanMeursPeletierPozar22}, ($HJ^2$) and ($HJ^3$) do not appear, and the expressions of \eqref{HJ1} and \eqref{HJe} are obtained by replacing $V_\alpha''(z)$ and $V_{\alpha_\e}''(z)$ by $1/z^2$. Hence, with respect to \cite{VanMeursPeletierPozar22}, the treatments of ($HJ^2$) and ($HJ^3$) are new, and for \eqref{HJ1} and \eqref{HJe} more care is needed regarding the singularity and tail of $V$. The appearance of $U$ does not cause significant difficulties.

\subsection{Viscosity solutions of \eqref{HJe} with a comparison principle} 
\label{s:HJ:HJe}

First we give a proper meaning to the right-hand side in \eqref{HJe}, which we call the Hamiltonian. Since we follow a similar construction as in \cite{VanMeursPeletierPozar22} (which is in turn based on that in \cite{ImbertMonneauRouy08} and \cite{ForcadelImbertMonneau09}), we will be brief in the motivation. 

In this section we keep $\e > 0$ fixed. Note that $V_{\alpha_\e}$ satisfies Assumption \ref{a:UV}, and thus the scaling by $\alpha_\e$ is irrelevant for the definition of viscosity solutions. Consider the Hamiltonian in \eqref{HJe}, suppose that $v \in C^1(Q)$ is bounded and take $(t,x) \in Q$. If $v_x(t,x) = 0$, then we define the Hamiltonian as $0$.
If $v_x(t,x) \neq 0$, then $\cM_\e[v](t,x)$ is properly defined. Indeed, the piecewise constant function $z \mapsto E_\e [ v(t,x+z) - v(t,x)]$ in the integrand in \eqref{cMe} jumps at $z = 0$ from $\pm \frac\e2$ to $\mp \frac\e2$. Since $V_{\alpha_\e}''$ is even, the integrand is odd locally around $0$, and thus \eqref{cMe} is well-defined. This motivates the following definition for the Hamiltonian for functions $v$ which need not be differentiable:

\begin{defn}[Hamiltonians at $\e>0$] 
\label{d:Hpe}
Fix $\rho>0$, $u \in L^\infty(Q)$, $t > 0$, $x\in \R$ and $\phi\in C^2(Q)$. If $\phi_x(t,x) \neq 0$, then we 
define 
\begin{align*}
\o H_{\rho,\e} [\phi,u](t, x) 
&:= \big( \o M_{\rho,\e}[\phi(t,\cdot),u(t, \cdot)](x) + U'(x) \big) \,|\phi_x(t, x)|,
 \\
\underline H_{\rho,\e} [\phi,u](t,x) 
&:= \big( \underline M_{\rho,\e}[\phi(t,\cdot),u(t, \cdot)](x) + U'(x) \big) \,|\phi_x(t, x)|,
\end{align*}
where 
\begin{align} \notag 
\o M_{\rho,\e}[\psi,v](x) 
&:= \pv \int_{B_\rho} E_\e^*\big[ \psi(x+z)-\psi(x)\big] V_{\alpha_\e}''(z) \, dz \\\notag 
&\qquad + \int_{B_\rho^c} E_\e^*\big[ v(x+z)-v(x)\big] V_{\alpha_\e}''(z) \, dz, \\\notag
\underline M_{\rho,\e}[\psi,v](x) 
&:= \pv \int_{B_\rho} E_{\e,*}\big[ \psi(x+z)-\psi(x)\big] V_{\alpha_\e}''(z) \, dz \\\label{uMrhoe}
&\qquad + \int_{B_\rho^c} E_{\e,*}\big[ v(x+z)-v(x)\big] V_{\alpha_\e}''(z) \, dz
\end{align}
for any $v \in L^\infty(\R)$ and any $\psi \in C^2(\R)$ with $\psi'(x) \neq 0$.
\end{defn}

The roles of $\rho$ and the test function $\phi$ are that $\phi$ acts as a regular substitute for the possibly discontinuous $u$ in the part of the integral in \eqref{cMe} over $B_\rho$. The two Hamiltonians $\o H_{\rho,\e}$ and $\underline H_{\rho,\e}$ only differ in taking either the upper or lower semi-continuous envelope of $E_\e$. This difference is introduced for technical reasons. We further remark that, while $\phi$ is defined on $Q$, the Hamiltonians only depend on $\phi |_{\{t\} \times B_\rho(x)}$. Hence, the time dependence is of no importance in Definition \ref{d:Hpe}, and is added for convenience later on in Definition \ref{d:HJe:VS} on viscosity solutions. 

Next we check that the expressions $\o M_{\rho,\e}$ and $\underline M_{\rho,\e}$ in Definition \ref{d:Hpe} are well-defined: 

\begin{lem} \label{l:Hpe}
Let $\e, \alpha_\e, \rho > 0$, $x \in \R$, $\phi, \psi \in C^2(\R)$ and $v \in L^\infty(\R)$. Then,
\begin{enumerate}[label=(\roman*)]
  \item \label{l:Hpe:Bp} if $\phi'(x) \neq 0$, then there exists $\rho_0 > 0$ such that for all $\tilde \rho \in (0, \rho_0]$ 
  \[
    \pv \int_{B_{\tilde \rho}} E_\e^*\big[ \phi(x+z)-\phi(x)\big] V_{\alpha_\e}''(z) \, dz = 0;
  \]
  \item \label{l:Hpe:Bpc} $\displaystyle \bigg| \int_{B_\rho^c} E_\e^*\big[ v(x+z)-v(x)\big] V_{\alpha_\e}''(z) \, dz \bigg| 
  \leq ( 4 \|v\|_\infty + \e ) |V_{\alpha_\e}'(\rho)|$;
  \item \label{l:Hpe:order} If $\phi \geq \psi$, $\phi(x) = \psi(x)$ and $0 \neq \phi'(x) \, (= \psi'(x))$. Then,
\begin{align*}
\pv \int_{B_1} E_\e^* [ \phi(x+z)-\phi(x)] V_{\alpha_\e}''(z) \, dz
\geq \pv \int_{B_1} E_\e^* [ \psi(x+z)-\psi(x)] V_{\alpha_\e}''(z) \, dz.
\end{align*}
\end{enumerate}
The above two properties also hold when $E_\e^*$ is replaced with $E_{\e,*}$. 
\end{lem}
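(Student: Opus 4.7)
The plan is to dispatch the three claims separately, exploiting three structural facts that recur throughout: $V_{\alpha_\e}''$ is even and nonnegative (by evenness and convexity of $V$ in Assumption~\ref{a:UV:basic}); the staircase $E_\e$ stays within $\e/2$ of the identity (cf.\ Figure~\ref{fig:Ee}); and $E_\e^*$ as well as $E_{\e,*}$ are nondecreasing. The final sentence of the lemma, about $E_{\e,*}$, will then follow from the same arguments, since $E_{\e,*}$ differs from $E_\e^*$ only on the null set $\e\Z$.

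For \ref{l:Hpe:Bp}, since $\phi'(x) \neq 0$ and $\phi \in C^2$, I would choose $\rho_0 > 0$ small enough that $\phi$ is strictly monotone on $B_{\rho_0}(x)$ and $\sup_{z \in B_{\rho_0}} |\phi(x+z) - \phi(x)| < \e$. Then for $z \in B_{\tilde\rho} \setminus \{0\}$ with $\tilde\rho \leq \rho_0$, the value $\phi(x+z) - \phi(x)$ lies in either $(0,\e)$ or $(-\e,0)$ with sign equal to $\mathrm{sgn}(z\,\phi'(x))$, so reading off Figure~\ref{fig:Ee} gives $E_\e^*[\phi(x+z)-\phi(x)] = (\e/2)\,\mathrm{sgn}(z\,\phi'(x))$ on $B_{\tilde\rho} \setminus \{0\}$. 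Since $V_{\alpha_\e}''$ is even, the integrand is odd on that set, and hence every symmetric truncation $\int_{\delta < |z| < \tilde\rho}$ vanishes identically, so the principal value is zero.

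For \ref{l:Hpe:Bpc}, the staircase estimate yields $|E_\e^*[\gamma]| \leq |\gamma| + \e/2$, hence $|E_\e^*[v(x+z)-v(x)]| \leq 2\|v\|_\infty + \e/2$; using $V_{\alpha_\e}'' \geq 0$ and its evenness,
\[
  \int_{B_\rho^c} V_{\alpha_\e}''(z)\, dz = 2\bigl(V_{\alpha_\e}'(\infty) - V_{\alpha_\e}'(\rho)\bigr) \leq 2|V_{\alpha_\e}'(\rho)|,
\]
where the last step uses the tail integrability of $V_{\alpha_\e}''$ implicit in the statement, so that $V_{\alpha_\e}'(\infty)=0$ and $V_{\alpha_\e}' \leq 0$. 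Multiplying the two bounds yields the desired estimate. For \ref{l:Hpe:order}, I would pick $\rho_0 > 0$ that works for both $\phi$ and $\psi$ in part \ref{l:Hpe:Bp}, so that the PV integrals over $B_{\rho_0}$ both vanish; on $B_1 \setminus B_{\rho_0}$ the weight $V_{\alpha_\e}''$ is in $L^1$ by $V \in W^{2,1}_{\mathrm{loc}}$, and the hypotheses $\phi \geq \psi$ and $\phi(x) = \psi(x)$ give $\phi(x+z) - \phi(x) \geq \psi(x+z) - \psi(x)$ for every $z$; monotonicity of $E_\e^*$ combined with $V_{\alpha_\e}'' \geq 0$ preserves the inequality pointwise, and integrating concludes.

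The main delicate point will be \ref{l:Hpe:Bp}: the cancellation is not a mere heuristic symmetry argument but requires verifying that on $B_{\tilde\rho} \setminus \{0\}$ no jump of $E_\e$ is crossed, so that the integrand is literally odd rather than odd-on-average. This is what forces $\rho_0$ to be chosen small relative to both $\e$ and $\phi'(x)$. Once \ref{l:Hpe:Bp} is secured, the remaining parts reduce to routine monotone-integrand estimates on the annulus $B_1 \setminus B_{\rho_0}$ and in the tail $B_\rho^c$.
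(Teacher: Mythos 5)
Your proof is correct and follows essentially the same route as the paper, which for \ref{l:Hpe:Bp} and \ref{l:Hpe:Bpc} simply defers to \cite[Lemma 3.3]{VanMeursPeletierPozar22} — whose argument is precisely the odd-integrand cancellation near $z=0$ (the integrand equals $\tfrac\e2\sign(z\phi'(x))V_{\alpha_\e}''(z)$ on a small punctured ball, as the paper itself notes when motivating Definition \ref{d:Hpe}) together with the crude staircase bound $|E_\e^*[\gamma]|\le|\gamma|+\tfrac\e2$ — and which proves \ref{l:Hpe:order} by the same one-line observation ($V_{\alpha_\e}''\ge 0$ and $E_\e$ nondecreasing) that you spell out. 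One step in \ref{l:Hpe:Bpc} deserves care: the bound $\int_{B_\rho^c}V_{\alpha_\e}''(z)\,dz\le 2|V_{\alpha_\e}'(\rho)|$ requires $V_{\alpha_\e}'\le 0$ and $V_{\alpha_\e}'(z)\to 0$ as $z\to\infty$, which do not follow from Assumption \ref{a:UV:basic} alone (e.g.\ $V(x)=x^2$ satisfies that assumption but makes the tail integral infinite), and "tail integrability of $V''$" only yields that $\lim_{z\to\infty}V'(z)$ exists, not that it vanishes. This is an implicit hypothesis that the paper also leaves unstated (it holds under the monotonicity in Assumption \ref{a:UV:fg} and in all the paper's applications), so flagging it as you do is appropriate, but the justification should invoke $V'\le 0$ with $V'(\infty)=0$ rather than derive it from integrability.
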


\begin{proof}
The proof of \cite[Lemma 3.3]{VanMeursPeletierPozar22} applies to Statements \ref{l:Hpe:Bp} and \ref{l:Hpe:Bpc}.
Statement \ref{l:Hpe:order} follows from $V_{\alpha_\e}'' \geq 0$ and $E_\e$ being nondecreasing.
\end{proof}

The usual approach for defining viscosity solutions is to take $\o H_{\rho,\e}[\phi,u]$ or $\underline H_{\rho,\e}[\phi,u]$ as the Hamiltonian when $\phi_x(t,x) \neq 0$ and to take $0$ as the Hamiltonian when $\phi_x(t,x) = 0$. However, this solutions concept turns out to be too strong (see Section \ref{s:disc:tFct} for a further discussion). Indeed, collision events correspond to $\phi_x(t,x) = 0$, and thus proving that the Hamiltonian is $0$ for all test functions $\phi$ with $\phi_x(t,x) = 0$ requires a detailed description of all possible collision events. Fortunately, this can be largely avoided; the choice in the definition of viscosity solutions is lenient enough to restrict the class of test functions $\phi$ when $\phi_x(t,x) = 0$. However, the smaller we take the class of test functions, the larger the class of possible viscosity solutions, and thus the harder it becomes to establish a comparison principle (as that implies uniqueness of solutions). Therefore, we choose the restricted class of test functions based on our ability to prove both the comparison principles (see Theorems \ref{t:CPe}, \ref{t:CP:HJ1} and \ref{t:CP:HJk}) and the convergence (see Theorem \ref{t:conv}). The restricted class of test functions that we use when $\phi_x(t,x) = 0$ are the functions of the form $\phi(t,x) = \theta |x - \o x|^4 + g(t)$ for some constant $\theta \in \R$ and some function $g \in C^2((0,\infty))$. This class is the same as that used in \cite{VanMeursPeletierPozar22}. In Section \ref{s:disc:tFct} we motivate the choice of the power $4$ (rather than $2$, $6$, $8$ etc.). 

\begin{defn}[$\rho$-sub- and $\rho$-supersolutions for $\e>0$] \label{d:HJe:VS}
Let $\rho, \e > 0$.
\begin{itemize}
\item 
Let $u:Q \to \R$ be upper semi-continuous and bounded. The function $u$ is a $\rho$-subsolution of \eqref{HJe} in $Q$ if the following holds: whenever $\phi\in C^2(Q)$ is such that $u-\phi$ has a global maximum at $(\o t, \o x)$, we have
\[
\phi_t(\o t, \o x) \leq 
\begin{cases}
\o H_{\rho,\e}[\phi,u](\o t, \o x)& \text{if $\phi_x(\o t, \o x) \neq 0$,}\\
0 & \text{if $\phi( t,  x)$ is of the form $\theta |x - \o x|^4 + g(t)$,}\\
+\infty & \text{otherwise}.
\end{cases}
\]
\item Let $v:Q \to \R$ be lower semi-continuous and bounded. The function $v$ is a $\rho$-supersolution of \eqref{HJe} in $Q$ if the following holds: whenever $\psi \in C^2(Q)$ is such that $v-\psi$ has a global minimum at $(\o t, \o x)$, we have 
\[
\psi_t(\o t, \o x) \geq 
\begin{cases}
\underline H_{\rho, \e}[\psi,v](\o t, \o x)& \text{if $\phi_x(\o t, \o x) \neq 0$,}\\
0 & \text{if $\phi( t,  x)$ is of the form $\theta |x - \o x|^4 + g(t)$,}\\
-\infty & \text{otherwise}.
\end{cases}
\]
\end{itemize}
A function $u:Q \to \R$ is a $\rho$-solution of \eqref{HJe} in $Q$ if $u^*$ is a $\rho$-subsolution and $u_*$ is a $\rho$-supersolution.
\end{defn}

We remark that, as usual, we extend subsolutions $u$ and supersolutions $v$ to $t=0$ by
\[
  u(0,x) := u^*(0,x)
  \quad \text{and} \quad
  v(0,x) := v_*(0,x)
  \quad \text{for all } x \in \R.
\]
Without loss of generality, we may restrict the class of test functions for subsolutions in Definition \ref{d:HJe:VS} to those $\phi$ for which the maximum of $u-\phi$ is strict, that $(u - \phi)(\o t, \o x) = 0$ and that $\lim_{|t| + |x| \to \infty} (u - \phi)(t,x) = -\infty$. Indeed, if the maximum at $(\o t, \o x)$ is not strict, then we can approximate $\phi$ by 
\[
  \phi_\delta(t,x) := \phi(t,x) +  \delta |x - \o x|^4 + \delta |t - \o t|^2
\]
as $\delta \to 0$. Note that the maximum of $u-\phi_\delta$ is strict for any $\delta > 0$. Moreover, it follows from Lemma \ref{l:Hpe}\ref{l:Hpe:Bp} that the right-hand side in Definition \ref{d:HJe:VS} converges as $\delta \to 0$ to that of $\phi$.
Similarly, for supersolutions we may assume that $\psi$ satisfies the same properties (modulo obvious modifications).

The following lemma shows that Definition \ref{d:HJe:VS} does not depend on $\rho$. Therefore we can simply talk about subsolutions, supersolutions and viscosity solutions of \eqref{HJe}.

\begin{lem}[Independence of $\rho$ { \cite[Lem.\ 3.5]{VanMeursPeletierPozar22}}] \label{l:HJe:rho}
If $u$ is a $\rho$-sub- or $\rho$-supersolution of \eqref{HJe} for some $\rho > 0$, then it is a $\tilde\rho$-sub- or $\tilde\rho$-supersolution of \eqref{HJe}, respectively, for any $\tilde \rho > 0$. 
\end{lem}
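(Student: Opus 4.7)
The plan has four implications to verify: $\rho$-sub (resp.\ super) implies $\tilde\rho$-sub (resp.\ super), for $\tilde\rho > \rho$ and $\tilde\rho < \rho$. The super- and sub-solution arguments are mirror images (swap $E_\e^*$ with $E_{\e,*}$ and approximate from below instead of from above), so I only sketch the subsolution case. Fix a $\rho$-subsolution $u$ and a test function $\phi \in C^2(Q)$ with $u - \phi$ attaining its global maximum at $(\o t, \o x)$. When $\phi_x(\o t, \o x) = 0$, the right-hand side in Definition \ref{d:HJe:VS} does not depend on $\rho$, so the nontrivial case is $\phi_x(\o t, \o x) \neq 0$; without loss of generality $u \leq \phi$ globally with equality at $(\o t, \o x)$.

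For $\tilde\rho > \rho$ the conclusion is immediate from a direct computation: on the annulus $B_{\tilde\rho} \setminus B_\rho$ the $\tilde\rho$-Hamiltonian uses $\phi$ while the $\rho$-Hamiltonian uses $u$, so monotonicity of $E_\e^*$ applied to $u(\o t, \o x + z) - u(\o t, \o x) \leq \phi(\o t, \o x + z) - \phi(\o t, \o x)$ combined with $V_{\alpha_\e}'' \geq 0$ gives $\o M_{\tilde\rho,\e}[\phi,u](\o x) \geq \o M_{\rho,\e}[\phi,u](\o x)$. Multiplying by $|\phi_x(\o t, \o x)| \geq 0$ yields $\o H_{\tilde\rho,\e} \geq \o H_{\rho,\e} \geq \phi_t(\o t, \o x)$.

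The case $\tilde\rho < \rho$ will be the main obstacle, since the same monotonicity now goes the wrong way: $\o H_{\rho,\e}[\phi,u] \geq \o H_{\tilde\rho,\e}[\phi,u]$, so the $\rho$-subsolution inequality for $\phi$ is strictly weaker than what is needed. My remedy is to apply the $\rho$-subsolution property to a modified test function $\tilde\phi_k$ that agrees with $\phi$ on $B_{\tilde\rho}(\o x)$ but traces $u$ from above on $B_\rho \setminus B_{\tilde\rho}$, so that $\o M_{\rho,\e}[\tilde\phi_k, u] \to \o M_{\tilde\rho,\e}[\phi, u]$. Concretely, I approximate the bounded upper semi-continuous $u$ by smooth $w_k \in C^2(Q)$ with $w_k \geq u$ and $w_k \to u$ pointwise, choose a cutoff $\eta_k \in C^2(\R)$ with $\eta_k \equiv 1$ on $B_{\tilde\rho}$ and $\eta_k \equiv 0$ outside $B_{\tilde\rho + 1/k}$ (taking $k$ large enough that $\tilde\rho + 1/k < \rho$), and set
\[
  \tilde\phi_k(t, x) := \eta_k(x - \o x) \, \phi(t, x) + \big( 1 - \eta_k(x - \o x) \big) \, w_k(t, x).
\]
Then $\tilde\phi_k \in C^2(Q)$, $\tilde\phi_k \geq u$ globally as a convex combination of majorants, and $\tilde\phi_k(t, x) = \phi(t, x)$ for all $t$ and $|x - \o x| \leq \tilde\rho$, so all time and space derivatives at $(\o t, \o x)$ match those of $\phi$, and $u - \tilde\phi_k$ attains its global maximum $0$ at $(\o t, \o x)$.

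Applying the $\rho$-subsolution property to $\tilde\phi_k$ gives $\phi_t(\o t, \o x) \leq \o H_{\rho,\e}[\tilde\phi_k, u](\o t, \o x)$, and it remains to pass to the limit $k \to \infty$. On $B_{\tilde\rho}$ the integrand in $\o M_{\rho,\e}[\tilde\phi_k, u]$ already equals that in $\o M_{\tilde\rho,\e}[\phi, u]$. For every $z \in B_\rho \setminus B_{\tilde\rho}$, eventually $\eta_k(z) = 0$ so $\tilde\phi_k(\o t, \o x + z) = w_k(\o t, \o x + z) \to u(\o t, \o x + z)$ from above, while $\tilde\phi_k(\o t, \o x) = u(\o t, \o x)$ for all $k$. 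Since $E_\e^* = E_\e$ is right-continuous (the floor-based staircase in Figure \ref{fig:Ee} takes the upper value at each jump), the integrands converge pointwise on the annulus. Dominated convergence applies because $V_{\alpha_\e}'' \in L^1(B_\rho \setminus B_{\tilde\rho})$ by Assumption \ref{a:UV:basic} and $\tilde\phi_k(\o t, \cdot)$ is uniformly $L^\infty$-bounded on compacts; combined with the unchanged $B_\rho^c$-integral this yields $\o M_{\rho,\e}[\tilde\phi_k, u](\o x) \to \o M_{\tilde\rho,\e}[\phi, u](\o x)$. Passing to the limit in the subsolution inequality gives $\phi_t(\o t, \o x) \leq \o H_{\tilde\rho,\e}[\phi, u](\o t, \o x)$, completing the proof.
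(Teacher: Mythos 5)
Your proof is correct and follows the standard two-step argument — monotonicity of $E_\e^*$ and $V_{\alpha_\e}''\geq 0$ for enlarging $\rho$, and test-function surgery with a smooth approximation of $u$ from above (using right-continuity of $E_\e^*$ and dominated convergence on the annulus) for shrinking $\rho$ — which is exactly the proof of \cite[Lem.\ 3.5]{VanMeursPeletierPozar22} that the paper invokes here. No gaps.
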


The proof of \cite{VanMeursPeletierPozar22} applies directly to $U$ and $V$ under Assumption \ref{a:UV}.

\begin{thm}[Comparison principle for \eqref{HJe}]  
\label{t:CPe}
Let $U$ and $V$ satisfy Assumption \ref{a:UV}. Let $\e > 0$, and let $u$ be a subsolution and $v$ be a supersolution of \eqref{HJe}. Assume that for each $T>0$, 
\begin{equation} \label{CPe:far-field}
  \lim_{R\to\infty} \sup \big\{ u(t,x)-v(t,x):  |x|\geq R, \, t\in (0,T]\big\} \leq 0.  
\end{equation}
Then $u(0,\cdot) \leq v(0,\cdot)$ on $\R$ implies $u\leq v$ on $Q$.
\end{thm}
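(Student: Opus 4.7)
I would prove the theorem by contradiction via the classical doubling-of-variables technique, adapted to the restricted class of test functions used in Definition \ref{d:HJe:VS}. Suppose $M_0 := \sup_Q (u - v) > 0$. For small parameters $\alpha, \sigma, \gamma > 0$, introduce
\[
\Psi(t,x,s,y) := u(t,x) - v(s,y) - \tfrac{1}{2\alpha} |x-y|^2 - \tfrac{1}{2\alpha} (t-s)^2 - \gamma t - \sigma(|x|^2 + |y|^2).
\]
The far-field condition \eqref{CPe:far-field}, the initial ordering, and upper/lower semicontinuity guarantee that for $\sigma, \gamma$ sufficiently small (compared to $M_0$) and all $\alpha$ small, $\Psi$ attains a positive maximum at some $(\hat t, \hat x, \hat s, \hat y)$ with $\hat t, \hat s > 0$ and with the standard penalization estimates $|\hat x - \hat y|^2 / \alpha \to 0$, $|\hat t - \hat s|^2 / \alpha \to 0$ as $\alpha \to 0$.

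Next I would plug the natural test functions into the sub/supersolution inequalities: for $u$ take $\phi(t,x)$ equal to the $(t,x)$-part of the penalty (adding $v(\hat s, \hat y)$ so that $u - \phi$ has a strict global maximum at $(\hat t, \hat x)$, which one ensures by a routine perturbation $+\delta|x-\hat x|^4 + \delta(t-\hat t)^2$), and analogously for $v$ take $\psi(s,y)$. The relevant derivatives satisfy
\[
\phi_t(\hat t, \hat x) - \psi_s(\hat s, \hat y) = \gamma, \qquad
\phi_x(\hat t, \hat x) = p + 2\sigma \hat x, \quad \psi_y(\hat s, \hat y) = p, \quad p := \tfrac{\hat x - \hat y}{\alpha}.
\]
In the \emph{generic case} $p \neq 0$ and $p + 2 \sigma \hat x \neq 0$, Definition \ref{d:HJe:VS} yields
\[
\gamma \leq \overline H_{\rho,\e}[\phi, u](\hat t, \hat x) - \underline H_{\rho,\e}[\psi, v](\hat s, \hat y).
\]
I would then estimate the right-hand side to vanish as $\sigma, \alpha \to 0$, reaching the required contradiction with $\gamma > 0$. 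The $U'|\cdot|$ contributions give $O(\sigma)$ since $U'$ is Lipschitz. For the nonlocal part, I would split into the $B_\rho$ integral (involving the smooth test functions) and the $B_\rho^c$ integral (involving $u$ and $v$). On $B_\rho$ the test functions $\phi(\hat t, \hat x + \cdot) - \phi(\hat t, \hat x)$ and $\psi(\hat s, \hat y + \cdot) - \psi(\hat s, \hat y)$ differ only by $\sigma$-order terms, so by Lemma \ref{l:Hpe}\ref{l:Hpe:order} and evenness of $V_{\alpha_\e}''$ their contributions cancel up to $O(\sigma)$. On $B_\rho^c$, the key estimate is the pointwise inequality $u(\hat t, \hat x + z) - v(\hat s, \hat y + z) \leq u(\hat t, \hat x) - v(\hat s, \hat y)$, which comes directly from the maximality of $\Psi$ at $(\hat t, \hat x, \hat s, \hat y)$ (with small correction terms involving $\sigma$ and the far-field decay, which vanish as $\sigma \to 0$); combined with monotonicity of $E_\e^*$ and $E_{\e,*}$ this yields
\[
\overline M_{\rho,\e}[\phi, u](\hat t, \hat x) - \underline M_{\rho,\e}[\psi, v](\hat s, \hat y) \leq o_{\sigma}(1) + o_\rho(1),
\]
and then multiplying by $|p|$ (which is bounded) and using Lemma \ref{l:Hpe}\ref{l:Hpe:Bpc} to control tails gives a contradiction after sending $\alpha, \sigma \to 0$.

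The main obstacle will be the \emph{degenerate case} where $\phi_x(\hat t, \hat x) = 0$ or $\psi_y(\hat s, \hat y) = 0$, since the doubling test functions are not of the admissible form $\theta|x - \bar x|^4 + g(t)$. Here I would use the same device as in \cite{VanMeursPeletierPozar22}: observe that $p = (\hat x - \hat y)/\alpha$ appears in both gradients, so the two degeneracy conditions cannot occur simultaneously unless $\sigma \hat x = 0$, and then the remaining equation $p = 0$ forces $\hat x = \hat y$. In this situation I would replace the quadratic penalty $|x-y|^2/(2\alpha)$ by a quartic $|x-y|^4/(4\alpha)$ (a standard remedy), so that at a maximum with $\hat x = \hat y$ the resulting test function on the $u$-side becomes exactly of the admissible quartic form $\phi(t,x) = \sigma |x|^2 + \tfrac{1}{4\alpha} |x - \hat y|^4 + g(t)$; absorbing the $\sigma|x|^2$ term (e.g.\ by using a different lower-order penalty that vanishes to fourth order at $\hat x$) puts it in the form $\theta |x-\hat x|^4 + g(t)$, so that Definition \ref{d:HJe:VS} gives $\phi_t(\hat t, \hat x) \leq 0$, and likewise $\psi_s(\hat s, \hat y) \geq 0$ for the supersolution. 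Subtracting yields $\gamma \leq 0$, again a contradiction. Finally, iterating the argument and recovering that $u \leq v$ at $t = 0$ implies $u \leq v$ on $Q$ is standard; the only equation-specific ingredients are Lemma \ref{l:Hpe} (for the monotonicity and tail estimates on $\overline M_{\rho,\e}, \underline M_{\rho,\e}$) and Lemma \ref{l:HJe:rho} (so that $\rho$ can be chosen freely).
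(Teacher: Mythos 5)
Your overall architecture (doubling of variables, contradiction with a positive supremum, special treatment of points where the doubling test functions have vanishing gradient) is the same as the paper's, which simply defers to the proof of \cite[Theorem 3.6]{VanMeursPeletierPozar22}. However, the proposal glosses over precisely the step that the paper singles out as the nontrivial one. Your comparison of the tail integrals over $B_\rho^c$ via ``monotonicity of $E_\e^*$ and $E_{\e,*}$'' does not go through: the maximality of $\Psi$ gives $u(\hat t,\hat x+z)-u(\hat t,\hat x)\leq v(\hat s,\hat y+z)-v(\hat s,\hat y)+\mathrm{err}_z$ with $\mathrm{err}_z\neq 0$ (already because of your confinement term), and any additive error inside the argument of the staircase $E_\e$ is amplified to a full step of size $\e$; integrated against $V_{\alpha_\e}''$ over $B_\rho^c$ and multiplied by $|p|=|\hat x-\hat y|/\alpha$ (which is \emph{not} bounded for merely semicontinuous $u,v$; the standard penalization estimate only gives $|\hat x-\hat y|^2/\alpha\to 0$), this produces an error that does not vanish as $\alpha,\sigma\to 0$ for fixed $\e$. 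The actual argument compares $u(\hat t,\hat x+z)-u(\hat t,\hat x)$ with $v(\hat s,\hat y+(1\pm\delta)z)-v(\hat s,\hat y)$, so that the strict gain from the doubling penalty lets one apply the monotonicity of $E_\e$ with \emph{no} loss; the price is that the kernel on the $v$-side becomes the dilation $J_\delta$, and the estimate is closed by $J_\delta\to J$ in $L^1$ --- exactly the lemma the paper proves in detail for general $V$. Your proposal has no substitute for this mechanism. Relatedly, the quadratic confinement $\sigma(|x|^2+|y|^2)$ is both unnecessary (hypothesis \eqref{CPe:far-field} already localizes the supremum) and harmful: its contribution $\sigma\int_{B_\rho^c}z^2V_{\alpha_\e}''(z)\,dz$ to the tail can be infinite under Assumption \ref{a:UV:basic} alone (e.g.\ $V''(z)=z^{-2}$).

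The treatment of the degenerate gradients is also incomplete. With quadratic penalties, even the doubly degenerate case $\hat x=\hat y=0$ yields a \emph{quadratic} test function, which is not of the admissible form $\theta|x-\o x|^4+g(t)$, so both viscosity inequalities are vacuous and no contradiction results; the fourth-order penalization cannot be introduced as an after-the-fact ``replacement'' once the maximum point is found, but must be built in from the start (as in the paper's proof of Theorem \ref{t:CP:HJk}, where the doubly degenerate case forces $\o x=\o y=0$ so that both quartic wells are centered at the same point). More importantly, you do not address the mixed case in which exactly one of $\phi_x(\hat t,\hat x)$, $\psi_y(\hat s,\hat y)$ vanishes: there the inequality on the degenerate side is vacuous ($\leq+\infty$ resp.\ $\geq-\infty$), and the whole contradiction must come from the non-degenerate side, where one must show that the Hamiltonian is $o(1)$ because the surviving gradient is of order $\sigma^{1/4}$. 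This requires a quantitative bound on $|\psi_y|\cdot\pv\int_{B_\rho}E_{\e,*}[\cdots]V_{\alpha_\e}''$ for nearly degenerate quartic test functions, in the spirit of Lemma \ref{l:parabola}; nothing in your plan supplies it.
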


\begin{proof}
For the special case $U = 0$, $\alpha_\e = 1$ and $V(z) = -\log|z|$, Theorem \ref{t:CPe} is given by \cite[Theorem 3.6]{VanMeursPeletierPozar22}. That proof follows a standard approach, and can be extended with minor modifications to any $U$ and $V$ which satisfy Assumption \ref{a:UV}. Here, we wish to present in detail the least obvious modification, which is the proof of $J_\delta \to J$ in $L^1(\R)$ as $\delta \downarrow 0$, where 
\[
J(z) := \begin{cases}
V_{\alpha_\e}''(z)& \text{if }|z|> \rho\\
0 & \text{otherwise}
\end{cases}
\qquad \text{and}\qquad
J_\delta(z) := \begin{cases}
\displaystyle \frac1{1-\delta} J \Big( \frac z{1-\delta} \Big) & \text{if } z < 0\\
\displaystyle \frac1{1+\delta} J \Big( \frac z{1+\delta} \Big) & \text{if } z > 0
\end{cases}
\]
with $\rho > 0$ a given constant. To prove this, we show that for all $\hat \e > 0$ we have that $\| J_\delta - J \|_{L^1(0,\infty)} < \hat \e$ for all $\delta > 0$ small enough. This is sufficient, because the analogous statement on the interval $(-\infty,0)$ follows from a similar proof. Let $\tau_\delta$ be the bounded linear operator on $L^1(0,\infty)$ defined by $(\tau_\delta f)(x) = \frac1{1+\delta} f ( \frac z{1+\delta} )$. Note that $J_\delta = \tau_\delta J$ and that the operator norm of $\tau_\delta$ equals $1$. In addition to $\tau_\delta$, let $\varphi \in C([0,\infty))$ have bounded support such that $\| J - \varphi \|_{L^1(0,\infty)} < \frac13 {\hat \e}$. We observe that
\[
  \| J_\delta - \tau_\delta \varphi \|_{L^1(0,\infty)}
  = \| \tau_\delta ( J -  \varphi) \|_{L^1(0,\infty)}
  = \| J -  \varphi \|_{L^1(0,\infty)}
  \leq \frac{\hat \e}3. 
\]
Finally, since $\varphi$ is continuous with bounded support, it holds that $\| \tau_\delta \varphi - \varphi \|_{L^1(0,\infty)} < \frac\e3$ for all $\delta > 0$ small enough. Then, $\| J_\delta - J \|_{L^1(0,\infty)} < \e$ follows from the triangle inequality.
\end{proof} 

\subsection{Viscosity solutions of \eqref{HJ1} with a comparison principle} 
\label{s:HJ:HJ1} 

Viscosity solutions to the nonlocal equation \eqref{HJ1} have been introduced in \cite{ImbertMonneauRouy08,BilerKarchMonneau10} for the integrable Riesz potentials $V$ (recall \eqref{VR}).  This work goes back to at least \cite{Sayah91}. Here, however, we follow the approach in \cite{VanMeursPeletierPozar22} to adopt the weaker notion of viscosity solutions based on the restricted class of test functions as in Definition \ref{d:HJe:VS}. The reason for this is that it will simplify the proof of the convergence of \eqref{HJe} to \eqref{HJ1} in Theorem \ref{t:conv}. By reducing the class of test functions, we have to check that \eqref{HJ1} still satisfies a comparison principle. 
Even though we consider any $V$ satisfying Assumption \ref{a:UV}, this can be done with minor modifications to the arguments in \cite{VanMeursPeletierPozar22}. Hence, we will be brief in the following.

\begin{defn}[Limiting Hamiltonian] 
\label{d:Hp}
For $\alpha, \rho>0$, $u\in L^\infty(Q)$, $t > 0$, $x \in \R$ and $\phi\in C^2(Q)$, we define
\[
H_\rho[\phi,u](t,x) 
:= \big( \mathcal I_\rho[\phi(t,\cdot),u(t,\cdot)](x) + U'(x) \big) \, |\phi_x(t,x)|, 
\]
where 
\begin{equation} \label{Irho}
\mathcal I_\rho[\psi,v](x) 
:= \int_{B_\rho} \big( \psi(x+z)-\psi(x) - \psi'(x)z\big) V_\alpha''(z) \, dz
   + \int_{B_\rho^c} \big( v(x+z)-v(x)\big) V_\alpha''(z) \, dz
\end{equation} 
for any $\psi \in C^2(\R)$ and $v \in L^\infty(\R)$.
\end{defn}

This definition has some key differences with the Hamiltonians in Definition \ref{d:Hpe}. First, $\phi_x(t,x) = 0$ is allowed. Second, the step function $E_\e$ has disappeared from the integrand; we therefore do not need to separate cases between $E_{\e,*}$ and $E_\e^*$. Third, the principle value in \eqref{Irho} has disappeared, and instead  the additional term ``$- \psi'(x)z$" is added to the integrand. To motivate this, note from the evenness of $V_\alpha''(z)$ that:
\begin{equation} \label{odd0}
  f \in C(\o{B_\rho}) \text{ is odd} \implies
  \pv \int_{B_\rho} f(z) V_{\alpha}''(z) \, dz = 0.
\end{equation}
In particular, $\pv \int_{B_\rho} z V_\alpha''(z) \, dz = 0$, and thus the additional term  ``$- \psi'(x)z$" in \eqref{Irho} does not change the value of $\mathcal I_\rho[\psi,v](x)$. The reason for adding this term is that, by interpreting $\psi(x) + \psi'(x)z$ as the first order Taylor approximation of $z \mapsto \psi(x+z)$ at $z = 0$, it follows that $\psi(x+z)-\psi(x) - \psi'(x)z = C z^2 + o(z^2)$ as $z \to 0$. Then, from Lemma \ref{l:UV}\ref{l:UV:L1} it follows that the integral over $B_\rho$ in \eqref{Irho} exists without the need for using a principal value.

\begin{defn}[$\rho$-sub- and $\rho$-supersolutions of \eqref{HJ1}] \label{d:HJ1:VS}
Let $\alpha, \rho > 0$. 
\begin{itemize}
\item 
Let $u:Q \to \R$ be upper semi-continuous and bounded. The function $u$ is a $\rho$-subsolution of \eqref{HJ1} in $Q$ if the following holds: whenever $\phi\in C^2(Q)$ is such that $u-\phi$ has a global maximum at $(\o t, \o x)$, we have
\begin{align*} 
\phi_t(\o t, \o x) \leq 
\begin{cases}
H_\rho[\phi,u](\o t, \o x)& \text{if $\phi_x(\o t, \o x) \neq 0$,}\\
0 & \text{if $\phi( t,  x)$ is of the form $\theta |x - \o x|^4 + g(t)$,}\\
+\infty & \text{otherwise}.
\end{cases}
\end{align*}
\item Let $v:Q \to \R$ be lower semi-continuous and bounded. The function $v$ is a $\rho$-supersolution of \eqref{HJ1} in $Q$ if the following holds: whenever $\psi\in C^2(Q)$ is such that $v-\psi$ has a global minimum at $(\o t, \o x)$, we have
\[
\psi_t(\o t, \o x) \geq 
\begin{cases}
H_\rho[\psi,v](\o t, \o x)& \text{if $\phi_x(\o t, \o x) \neq 0$,}\\
0 & \text{if $\phi( t,  x)$ is of the form $\theta |x - \o x|^4 + g(t)$,}\\
-\infty & \text{otherwise}.
\end{cases}
\]
\end{itemize}
A function $u:Q \to \R$ is a $\rho$-solution of \eqref{HJ1} in $Q$ if $u^*$ is a $\rho$-subsolution and $u_*$ is a $\rho$-supersolution.
\end{defn} 

Definition \ref{d:HJ1:VS} shares several similarities with Definition \ref{d:HJe:VS}. Also in Definition \ref{d:HJ1:VS} we may assume without loss of generality that the maximum of $u-\phi$ is strict, that $(u - \phi)(\o t, \o x) = 0$ and that $\lim_{|t| + |x| \to \infty} (u - \phi)(t,x) = -\infty$.
Similarly, we may assume the same properties (with obvious modifications) of $\psi$ with respect to $v$. Moreover, the notion of viscosity solutions in Definition \ref{d:HJ1:VS} is independent of $\rho$ in a similar sense as in Lemma \ref{l:HJe:rho} (see Lemma \ref{l:HJ:rho}), and therefore we will not mention the dependence on $\rho$ henceforth.

\begin{lem}[Independence of $\rho$] \label{l:HJ:rho}
Let $\alpha > 0$. If $u$ is a $\rho$-sub- or $\rho$-supersolution of \eqref{HJ1} for some $\rho > 0$, then it is a $\tilde\rho$-sub- or $\tilde\rho$-supersolution of \eqref{HJ1} for any $\tilde \rho > 0$, respectively.
\end{lem}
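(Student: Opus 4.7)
The proof follows the strategy of Lemma~\ref{l:HJe:rho} as in \cite[Lem.~3.5]{VanMeursPeletierPozar22}, adapted to the continuous Hamiltonian $\mathcal I_\rho$. I treat the subsolution case; the supersolution case is symmetric via sign reversal, and the alternative clauses for quartic test functions and ``otherwise'' in Definition~\ref{d:HJ1:VS} are independent of $\rho$. Without loss of generality, I assume the global maximum $(\bar t, \bar x)$ of $u-\phi$ is strict, $(u-\phi)(\bar t,\bar x)=0$, and $(u-\phi)(t,x)\to-\infty$ as $|t|+|x|\to\infty$. The central computation is: at any such contact point with $\phi_x(\bar t,\bar x)\neq 0$, and for $0<\tilde\rho<\rho$, the oddness of $z V_\alpha''(z)$ on the symmetric annulus $B_\rho\setminus B_{\tilde\rho}$ kills the $-\phi_x(\bar t,\bar x)\,z$ term from the definition of $\mathcal I_\rho$, while $\phi(\bar t,\bar x)=u(\bar t,\bar x)$ cancels the constants, giving
\[
  \mathcal I_\rho[\phi(\bar t,\cdot),u(\bar t,\cdot)](\bar x) - \mathcal I_{\tilde\rho}[\phi(\bar t,\cdot),u(\bar t,\cdot)](\bar x) = \int_{B_\rho\setminus B_{\tilde\rho}} [\phi-u](\bar t,\bar x+z)\,V_\alpha''(z)\,dz \geq 0,
\]
where integrability holds by Assumption~\ref{a:UV:HJ1} (via Lemma~\ref{l:V:sing}) together with the boundedness of $u$ and $\phi$ and Assumption~\ref{a:UV:basic}. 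Consequently $H_\rho[\phi,u](\bar t,\bar x)\geq H_{\tilde\rho}[\phi,u](\bar t,\bar x)$.

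For $\tilde\rho\geq\rho$ this immediately gives $\phi_t(\bar t,\bar x)\leq H_\rho[\phi,u](\bar t,\bar x)\leq H_{\tilde\rho}[\phi,u](\bar t,\bar x)$, so every $\rho$-subsolution is a $\tilde\rho$-subsolution. For $\tilde\rho<\rho$ the monotonicity runs the wrong way and a test-function modification is needed. For small $\sigma>0$ construct a $C^2$ upper regularization $w_\sigma$ of $u$ with $w_\sigma\geq u$ and $\|w_\sigma-u^*\|_\infty\leq\sigma$ on bounded regions, obtained e.g.\ by a sup-convolution of $u$ followed by mollification at a scale $o(\sigma)$ and an additive $\sigma$-shift to preserve the upper-bound inequality. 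Choose a cutoff $\chi_\sigma\in C_c^\infty(\R)$ with $\chi_\sigma\equiv 1$ on $B_{\tilde\rho-\sigma}(\bar x)$ and $\chi_\sigma\equiv 0$ outside $B_{\tilde\rho}(\bar x)$, and set
\[
  \phi^\sigma(t,x) := \chi_\sigma(x)\,\phi(t,x) + (1-\chi_\sigma(x))\,w_\sigma(t,x).
\]
Then $\phi^\sigma\in C^2(Q)$, $\phi^\sigma\geq u$ globally (convex combination of two upper bounds), $\phi^\sigma=\phi$ on a space-time neighborhood of $(\bar t,\bar x)$, and $u-\phi^\sigma$ still attains its global maximum $0$ at $(\bar t,\bar x)$. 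Applying the $\rho$-subsolution property to $\phi^\sigma$ and decomposing,
\[
  \mathcal I_\rho[\phi^\sigma(\bar t,\cdot),u(\bar t,\cdot)](\bar x) - \mathcal I_{\tilde\rho}[\phi(\bar t,\cdot),u(\bar t,\cdot)](\bar x) = \int_{B_{\tilde\rho}\setminus B_{\tilde\rho-\sigma}} [\phi^\sigma-\phi]\,V_\alpha''\,dz + \int_{B_\rho\setminus B_{\tilde\rho}} [w_\sigma-u]\,V_\alpha''\,dz,
\]
both terms are $O(\sigma)$: the first because $|B_{\tilde\rho}\setminus B_{\tilde\rho-\sigma}|=2\sigma$ and $V_\alpha''$ is bounded on this set (away from the origin), the second because $w_\sigma-u\leq\sigma$ uniformly on the annulus. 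Since $\phi^\sigma_t(\bar t,\bar x)=\phi_t(\bar t,\bar x)$ and $\phi^\sigma_x(\bar t,\bar x)=\phi_x(\bar t,\bar x)$, passing $\sigma\to 0$ yields $\phi_t(\bar t,\bar x)\leq H_{\tilde\rho}[\phi,u](\bar t,\bar x)$, as required.

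The main obstacle is the construction of a $C^2$ upper regularization $w_\sigma$ for the merely upper semi-continuous $u$ that is globally $\geq u$ and uniformly close to $u^*$ on the annulus $B_\rho(\bar x)\setminus B_{\tilde\rho}(\bar x)$, together with verifying that the smooth patching into $\phi^\sigma$ preserves $C^2$-regularity and the upper-bound property. Once this construction is in place, the remainder is bookkeeping using the oddness cancellations above and the mild integrability of $V_\alpha''$ inherited from Assumption~\ref{a:UV:HJ1}.
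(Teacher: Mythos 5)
Your overall architecture is the correct one and is essentially the argument the paper has in mind (the paper omits this proof, deferring to the analogue for \eqref{HJe}, which is the standard test-function-modification argument): enlarging the radius is free because
$\mathcal I_\rho[\phi,u](\o x)-\mathcal I_{\tilde\rho}[\phi,u](\o x)=\int_{B_\rho\setminus B_{\tilde\rho}}[\phi-u](\o t,\o x+z)V_\alpha''(z)\,dz\ge 0$ for $\tilde\rho<\rho$ (using $V_\alpha''\ge0$, the oddness cancellation of the linear term, and the normalization $(u-\phi)(\o t,\o x)=0$), while shrinking the radius requires replacing $\phi$ on the annulus by a smooth function close to $u$. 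Your decomposition of $\mathcal I_\rho[\phi^\sigma,u]-\mathcal I_{\tilde\rho}[\phi,u]$ into the two error terms is also correct.

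The genuine gap is in the regularization step. A bounded, merely upper semi-continuous $u$ cannot in general be approximated \emph{uniformly} from above by continuous (let alone $C^2$) functions: a uniform limit of continuous functions is continuous, and the sub/supersolutions relevant in this paper are discontinuous step functions. Hence there is no $w_\sigma\in C^2$ with $u\le w_\sigma\le u^*+\sigma$ on bounded sets; the sup-convolution converges to $u$ only pointwise (monotonically), not uniformly. Your estimate of the second error term, justified by ``$w_\sigma-u\le\sigma$ uniformly on the annulus,'' is therefore false as stated. The step is repairable: since $w_\sigma\downarrow u$ pointwise, $w_\sigma-u$ is uniformly bounded, and $V_\alpha''\in L^1(B_\rho\setminus B_{\tilde\rho})$ (the annulus is a compact subset of $\R\setminus\{0\}$ and $V\in W_{\loc}^{2,1}(0,\infty)$ by Assumption \ref{a:UV:basic}), dominated convergence gives $\int_{B_\rho\setminus B_{\tilde\rho}}[w_\sigma-u](\o t,\o x+z)V_\alpha''(z)\,dz\to0$, which is all you need. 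A smaller inaccuracy of the same kind: $V_\alpha''$ need not be bounded on $B_{\tilde\rho}\setminus B_{\tilde\rho-\sigma}$ (Assumption \ref{a:UV:basic} only gives $V''\in L_{\loc}^1(0,\infty)$ there), but the first error term still vanishes because $\int_{B_{\tilde\rho}\setminus B_{\tilde\rho-\sigma}}V_\alpha''(z)\,dz\to0$ as $\sigma\to0$ by absolute continuity of the integral. With these two repairs the proof is complete.
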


The proof of Lemma \ref{l:HJ:rho} is a simplification of that of Lemma \ref{l:HJe:rho}; we omit it.

\begin{thm}[Comparison principle for \eqref{HJ1}] \label{t:CP:HJ1}
Let $U$ and $V$ satisfy Assumption \ref{a:UV}. Let $u^\circ \in BUC(\R)$, and let $u$ be a subsolution and $v$ be a supersolution of \eqref{HJ1}. If $u(0,\cdot) \leq u^\circ \leq v(0,\cdot)$ on $\R$, then  $u\leq v$ on $Q$.
\end{thm}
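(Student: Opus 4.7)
The plan is to argue by contradiction using the standard doubling-of-variables technique, following the approach in \cite{VanMeursPeletierPozar22} with modifications to accommodate the general kernel $V_\alpha''$ and the restricted test-function class in Definition \ref{d:HJ1:VS}. Suppose $\sup_Q(u - v) > 0$ and fix $T > 0$ with $\sup_{[0,T] \times \R}(u - v) > 0$. Introduce the penalized functional
\[
\Phi_{\e,\eta,\gamma}(t,s,x,y) := u(t,x) - v(s,y) - \frac{|x-y|^4}{4\e} - \frac{(t-s)^2}{2\e} - \eta t - \gamma(\langle x \rangle + \langle y \rangle),
\]
where $\langle x \rangle := (1+x^2)^{1/2}$, and $\e, \eta, \gamma > 0$ are small parameters. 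The quartic penalty $|x-y|^4/(4\e)$ is crucial: when the induced test function has vanishing gradient in $x$ (which forces $\hat x \approx \hat y$), the test function is, modulo the small $\gamma$-term, exactly of the admissible form $\theta|x - \bar x|^4 + g(t)$.

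Next I would use standard penalization estimates. Boundedness of $u,v$ together with the coercivity of $\langle \cdot \rangle$ ensures that $\Phi_{\e,\eta,\gamma}$ attains a maximum at some $(\hat t,\hat s,\hat x,\hat y)$; letting $\e \to 0$ first with $\eta, \gamma$ fixed, one obtains $|\hat x - \hat y|^4/\e \to 0$ and $(\hat t-\hat s)^2/\e \to 0$. Using $u^\circ \in BUC(\R)$ and $u(0,\cdot) \leq u^\circ \leq v(0,\cdot)$, I would rule out $\hat t = 0$ or $\hat s = 0$ for all $\e,\eta,\gamma$ sufficiently small. Then the functions
\[
\phi_1(t,x) := \frac{|x-\hat y|^4}{4\e} + \frac{(t-\hat s)^2}{2\e} + \eta t + \gamma \langle x \rangle, \qquad
\psi_2(s,y) := -\frac{|\hat x - y|^4}{4\e} - \frac{(\hat t - s)^2}{2\e} - \gamma \langle y \rangle
\]
are test functions for $u$ at $(\hat t,\hat x)$ and for $v$ at $(\hat s,\hat y)$, with $\phi_{1,x} = \frac{|\hat x-\hat y|^2(\hat x - \hat y)}{\e} + O(\gamma)$ and $\psi_{2,y} = \frac{|\hat x-\hat y|^2(\hat x - \hat y)}{\e} + O(\gamma)$.

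The main step is the Hamiltonian comparison. In the case $\phi_{1,x}(\hat t,\hat x), \psi_{2,y}(\hat s,\hat y) \neq 0$, subtracting the sub/super definitions gives $\eta \leq H_\rho[\phi_1,u](\hat t,\hat x) - H_\rho[\psi_2,v](\hat s,\hat y)$. To show the right-hand side vanishes as $\e,\gamma \to 0$, I would exploit the translation-invariance of the quartic penalty: the maximum property applied at $(\hat x + z, \hat y + z)$ gives
\[
u(\hat t, \hat x + z) - u(\hat t, \hat x) \leq v(\hat s, \hat y + z) - v(\hat s, \hat y) + O(\gamma(1 + |z|))
\]
for all $z \in \R$, which is exactly what is needed to compare the $B_\rho^c$-parts of $\mathcal I_\rho[\phi_1,u]$ and $\mathcal I_\rho[\psi_2,v]$ (recalling that $V_\alpha'' \geq 0$). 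For the $B_\rho$-parts, I would use Assumption \ref{a:UV:HJ1}, together with the Taylor estimate $\phi_1(\hat x+z) - \phi_1(\hat x) - \phi_{1,x}(\hat t,\hat x) z = O(|\hat x-\hat y|^2 z^2 + z^4/\e)$, to obtain an integrable bound, and analogously for $\psi_2$; as $\hat x - \hat y \to 0$ these contributions vanish. The factor $|\phi_{1,x}| \sim |\hat x-\hat y|^3/\e$ multiplying $\mathcal I_\rho + U'$ keeps the difference under control.

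The main obstacle I expect is the degenerate case where the gradients of $\phi_1$ or $\psi_2$ vanish, which occurs when $\hat x \approx \hat y$ and the $\gamma$-term tips the balance. Here the $\gamma\langle x \rangle$ perturbation no longer fits the restricted form $\theta|x - \bar x|^4 + g(t)$, so one cannot directly apply the degenerate branch of the definition. I would resolve this by a small modification: either replace $\gamma\langle x\rangle$ with a smooth coercive function whose second derivative vanishes at $\hat x$ (e.g.\ a truncated $|x - \hat x|^4$ plus a linear correction outside a ball), or apply a perturbation $\phi_1 + \delta|x-\hat x|^4$ to reduce to the degenerate class, using Lemma \ref{l:Hpe}\ref{l:Hpe:Bp} and \eqref{f:a:UV:HJ1} to pass $\delta \to 0$. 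Once the degenerate branch is triggered, the subsolution condition yields $\phi_{1,t}(\hat t,\hat x) = \frac{\hat t - \hat s}{\e} + \eta \leq 0$, which combined with the supersolution inequality $\psi_{2,s}(\hat s,\hat y) = \frac{\hat t-\hat s}{\e} \geq 0$ gives $\eta \leq 0$, the desired contradiction.
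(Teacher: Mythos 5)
Your overall strategy (doubling of variables with a quartic penalty $|x-y|^4/(4\e)$, chosen so that the induced test functions are compatible with the restricted class) is the right one; it is essentially the strategy of \cite[Theorem 3.11]{VanMeursPeletierPozar22}, to which the paper's own two-line proof delegates after checking only the integral estimates \eqref{pf:yt} and invoking \cite[Prop.~3.7]{CrandallIshiiLions92} for the $U$-term. However, your treatment of the degenerate case contains a genuine gap, and it is precisely the point where the restricted test-function class bites. The coercivity term $\gamma(\langle x\rangle+\langle y\rangle)$ is incompatible with the admissible degenerate form $\theta|x-\o x|^4+g(t)$: if $\phi_{1,x}(\hat t,\hat x)=0$ but $\phi_1$ is not \emph{globally exactly} of that form, Definition \ref{d:HJ1:VS} returns ``$+\infty$'' and gives no information whatsoever, so you cannot conclude $\phi_{1,t}(\hat t,\hat x)\leq 0$. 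Neither of your proposed fixes repairs this. Adding $\delta|x-\hat x|^4$ does not remove the offending terms $|x-\hat y|^4/(4\e)$ and $\gamma\langle x\rangle$, so the perturbed function is still inadmissible in the degenerate branch. Replacing $\gamma\langle x\rangle$ by a coercive function flat at $\hat x$ is circular ($\hat x$ depends on the penalization) and in any case insufficient, since vanishing of the second derivative at a point is not what the definition requires.

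The device that actually works — and which the paper uses in its own proof of the analogous Theorem \ref{t:CP:HJk} — is to make the coercivity term itself quartic, e.g.\ $\frac\delta4(x^4+y^4)$. Then one splits three cases. If both gradients vanish, the two stationarity relations force $\hat x=\hat y=0$, whereupon $\phi_1(t,x)=(\tfrac1{4\e}+\tfrac\delta4)x^4+g(t)$ is \emph{genuinely} of the admissible form and the degenerate branch yields $\phi_{1,t}\leq 0$, hence $\eta\leq 0$. If exactly one gradient vanishes (a case your sketch does not separate out), the explicit quartic structure gives quantitative bounds $|\o\Psi_y|\lesssim\delta^{1/4}$, $|\o\Psi_{yy}|\lesssim\e^{-1/3}\delta^{1/6}$ as in the proof of Theorem \ref{t:CP:HJk}, and one sends $\delta\to0$ with $\e,\gamma$ fixed; for \eqref{HJ1} the corresponding statement is that $H_\rho[\psi_2,v]$ is controlled by $|\psi_{2,y}|$ times a quantity bounded via \eqref{pf:yt}, which vanishes as $\delta\to 0$. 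Only in the remaining case, where both gradients are nonzero, does your Hamiltonian comparison apply; there your use of translation invariance for the $B_\rho^c$-parts and the Taylor estimate for the $B_\rho$-parts is the right idea, though the error you need is $O\big(\e^{-1}(\hat x-\hat y)^2z^2+\e^{-1}|z|^4\big)$ (note the $\e^{-1}$ on the quadratic term) and one must let $\rho\to0$ jointly with $\e$, using $\int_{B_\rho}z^2V_\alpha''(z)\,dz=o(1)$ from \eqref{pf:yt}, to absorb the prefactor $|\phi_{1,x}|\sim|\hat x-\hat y|^3/\e$.
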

 
\begin{proof}
For the special case $U = 0$, $\alpha = 1$ and $V(z) = -\log|z|$, Theorem \ref{t:CP:HJ1} is given by \cite[Theorem 3.11]{VanMeursPeletierPozar22}. For general $U, V, \alpha$, the proof in \cite{VanMeursPeletierPozar22} can easily be extended. Indeed, to allow for $U \neq 0$, we rely on the well-known result \cite[Prop.\ 3.7]{CrandallIshiiLions92}.  Regarding $V_{\alpha}$, we need to redo the following computation. For the special case $V_\alpha(z) = -\log|z|$, we have \comm{For CIL92 Prop 3.7 appl: see p.85}
\begin{equation*} 
  \int_{B_\rho} z^2 V_\alpha''(z) \, dz = 2 \rho
  \qand
  \int_{\R \setminus B_\rho} V_\alpha''(z) \, dz = \frac2\rho.
\end{equation*}
For any $\alpha > 0$ and any $V$ under Assumption \ref{a:UV} we only have
\begin{equation} \label{pf:yt}
  \int_{B_\rho} z^2 V''(z) \, dz = o(1)
  \qand
  \int_{\R \setminus B_\rho} V''(z) \, dz = -2V'(\rho) = o(\rho^{-2})
  \quad \text{as } \rho \to 0,
\end{equation}
which converge to $0$ at a slower rate.
Yet, this slower rate is still sufficient for the proof of \cite[Thm.\ 3.11]{VanMeursPeletierPozar22} to carry through. \comm{[ see p.36 for details]}
\end{proof}

\subsection{Viscosity solutions of ($HJ^2$) and ($HJ^3$) with a comparison principle} 
\label{s:HJ:HJ23}

The equations \eqref{HJk} for $m=2,3$ are local. They fit to the class of PDEs considered in \cite{GigaGotoIshiiSato91} for which viscosity solutions are defined and for which a comparison principle is established. Therefore, it only remains to show that the comparison principle in \cite{GigaGotoIshiiSato91} remains to hold for the restricted class of test functions introduced in Definition \ref{d:HJe:VS}.

\begin{defn}[Sub- and supersolutions for \eqref{HJk}] \label{d:HJk:VS}
Let $m \in \{2,3\}$ and $\beta > 0$.
\begin{itemize} 
\item 
Let $u:Q \to \R$ be upper semi-continuous and bounded. The function $u$ is a subsolution of \eqref{HJk} in $Q$ if the following holds: whenever $\phi\in C^m(Q)$ is such that $u-\phi$ has a global maximum at $(\o t, \o x)$, we have
\begin{align}
\label{visc-subsol-cond-k}
\phi_t(\o t, \o x) 
\leq \begin{cases}
\big( f_m (\phi_x) \phi_{xx} + U' |\phi_x| \big) (\o t, \o x)& \text{if $\phi_x(\o t, \o x) \neq 0$,}\\
0 & \text{if $\phi( t,  x)$ is of the form $\theta |x - \o x|^4 + g(t)$,}\\
+\infty & \text{otherwise}.
\end{cases}
\end{align}
\item Let $v:Q \to \R$ be lower semi-continuous and bounded. The function $v$ is a supersolution of \eqref{HJk} in $Q$ if the following holds: whenever $\psi\in C^m(Q)$ is such that $u-\psi$ has a global minimum at $(\o t, \o x)$, we have
\begin{align}
\label{visc-supersol-cond-k} 
\psi_t(\o t, \o x) 
\geq \begin{cases}
\big( f_m (\psi_x) \psi_{xx} + U' |\psi_x| \big) (\o t, \o x)& \text{if $\phi_x(\o t, \o x) \neq 0$,}\\
0 & \text{if $\phi( t,  x)$ is of the form $\theta |x - \o x|^4 + g(t)$,}\\
+\infty & \text{otherwise}.
\end{cases}
\end{align}
\end{itemize}
A function $u:Q \to \R$ is a solution of \eqref{HJk} in $Q$ if $u^*$ is a subsolution and $u_*$ is a supersolution.
\end{defn}

The choice of $C^m$-regularity of the test functions $\phi$ and $\psi$ is of little importance. We have chosen the lowest regularity for which our proof of Theorem \ref{t:conv} works (see Lemma \ref{l:conv:RHS}). 

We state the comparison principle in terms of the functions $f_m$. From $f_2(\kappa) = \|V\|_{L^1(\R)} |\kappa|$ and the properties of $f_3$ stated in Lemma \ref{l:f3} we have that $f_m$ satisfy the requirements for any $V$ under Assumption \ref{a:UV}.

\begin{thm}[Comparison principles for \eqref{HJk}] \label{t:CP:HJk}
Let $m \in \{2,3\}$, $\beta > 0$ and $u^\circ \in BUC(\R)$. 
Let $u$ be a subsolution and $v$ be a supersolution of \eqref{HJk}. 
If 
\begin{itemize}
  \item $f_m$ is locally Lipschitz continuous on $\R$ with $f_m \geq f_m(0) = 0$,
  \item $u$ and $v$ are bounded on $Q$, and
  \item $u(0,\cdot) \leq u^\circ \leq v(0,\cdot)$ on $\R$,
\end{itemize}
then  $u\leq v$ on $Q$. \comm{p.80: these assumptions on $f_m$ are what is needed for GGIS's setting}
\end{thm}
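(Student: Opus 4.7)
My plan is to adapt the standard proof of the comparison principle for second-order degenerate parabolic equations (as in \cite{GigaGotoIshiiSato91} and the User's Guide \cite{CrandallIshiiLions92}) to the restricted test function class of Definition \ref{d:HJk:VS}. The equations \eqref{HJk} for $m=2,3$ have the form $u_t = F(x,u_x,u_{xx})$ with $F(x,p,X) = f_m(p) X + U'(x)|p|$, and the hypotheses that $f_m$ is locally Lipschitz with $f_m \geq f_m(0) = 0$ and $U' \in W^{1,\infty}_{\loc}(\R)$ place $F$ in the standard class of proper, degenerate elliptic operators. The only deviation from the standard viscosity setting is that, at points where $\phi_x$ vanishes, Definition \ref{d:HJk:VS} only imposes the subsolution inequality when $\phi$ has the specific quartic form $\theta|x - \o x|^4 + g(t)$. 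The key idea, inspired by \cite{VanMeursPeletierPozar22}, is to replace the usual quadratic spatial penalty in the doubling-of-variables argument by a \emph{quartic} penalty $|x-y|^4/(4\e)$, so that whenever the penalty produces a test function with vanishing first spatial derivative at the approximating maximum, that test function is automatically of the admissible quartic form.

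Concretely, arguing by contradiction, suppose $\sup_Q(u-v-\delta t) > 0$ for some small $\delta > 0$ and on some strip $Q_T$. I would consider
\begin{equation*}
\Phi_{\e,\eta}(t,s,x,y) := u(t,x) - v(s,y) - \frac{|x-y|^4}{4\e} - \frac{(t-s)^2}{2\eta} - \delta t - \lambda(x^2 + y^2),
\end{equation*}
with $\lambda > 0$ a localizer ensuring the supremum is attained at an interior point $(t_\e,s_\e,x_\e,y_\e)$, and derive the standard estimates $|x_\e - y_\e|^4/\e \to 0$ and $(t_\e - s_\e)^2/\eta \to 0$. Split on whether $x_\e = y_\e$. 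In that degenerate case, the induced test function for $u$ at $(t_\e,x_\e)$ has the form $\frac{1}{4\e}|x - x_\e|^4 + g(t) + O(\lambda)$, which is of the admissible quartic form, and Definition \ref{d:HJk:VS} gives $(t_\e - s_\e)/\eta + \delta + O(\lambda) \leq 0$; the analogous supersolution inequality at $(s_\e,y_\e)$ yields $(t_\e - s_\e)/\eta - O(\lambda) \geq 0$, and subtracting then sending $\lambda \to 0$ contradicts $\delta > 0$. In the non-degenerate case $x_\e \neq y_\e$, the common spatial derivative $(x_\e - y_\e)^3/\e$ is nonzero and the standard alternative of Definition \ref{d:HJk:VS} applies: I would invoke Crandall--Ishii's theorem of sums for the $C^2$ penalty to obtain matched parabolic sub- and super-jets, subtract the resulting inequalities, and use $f_m \geq 0$ together with the matrix inequality $X_\e \leq Y_\e$ and the local Lipschitz control on $U'$ to again reach $\delta \leq 0$.

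The main obstacle is the non-degenerate case: controlling the product $f_m(p_\e)(X_\e - Y_\e)$ delivered by the theorem of sums as $\e \to 0$. Because the penalty is quartic rather than quadratic, the Hessian bound supplied by Crandall--Ishii scales like $(x_\e - y_\e)^2/\e$ while the common gradient $p_\e = (x_\e - y_\e)^3/\e$ can be large; one must combine the a priori estimate $|x_\e - y_\e|^4/\e \to 0$ with the explicit growth of $f_m$ (linear in $|y|$ for $m=2$, and bounded on bounded sets for $m=3$ under Assumption \ref{a:UV:basic}, as will be verified in Lemma \ref{l:f3}) to show that this product remains nonpositive up to controlled errors when $\e,\eta,\lambda$ are sent to zero in the correct order. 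Once this quantitative estimate is in place the doubling-of-variables argument closes, and it yields $u \leq v$ on every $Q_T$, hence on $Q$.
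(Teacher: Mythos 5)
There is a genuine gap, and it sits exactly where the restricted test-function class bites. Your localizer is quadratic, $\lambda(x^2+y^2)$, so in your ``degenerate'' case the induced test function for $u$ at $(t_\e,x_\e)$ is $\frac{1}{4\e}|x-y_\e|^4+\lambda x^2+(\text{terms in }t)$. This is \emph{not} of the admissible form $\theta|x-\o x|^4+g(t)$, and Definition \ref{d:HJk:VS} has no ``$O(\lambda)$'' slack: for an inadmissible $\phi$ with $\phi_x(\o t,\o x)=0$ the subsolution condition only gives $\phi_t\leq+\infty$, i.e.\ nothing. You cannot first send $\lambda\to0$ and then test, because the maximum point depends on $\lambda$. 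The paper's proof fixes this by taking the confinement itself quartic, $\frac{\delta}{4}(x^4+y^4)$, so that when \emph{both} spatial derivatives vanish one can deduce $\o x=\o y=0$ and the test function genuinely collapses to $\theta|x|^4+g(t)$.

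The second, related problem is that your case split ($x_\e=y_\e$ versus $x_\e\neq y_\e$) is not the right dichotomy. The spatial derivative of the test function for $u$ is $\frac{1}{\e}(x_\e-y_\e)^3$ \emph{plus the gradient of the confinement}, and likewise for $v$; these two derivatives are not equal and either can vanish while $x_\e\neq y_\e$ (with the paper's quartic confinement this happens precisely when $\o y=(1-\sqrt[3]{\e\delta})\,\o x$). So there is an unavoidable \emph{mixed} case, say $\phi_x(\o t,\o x)=0\neq\psi_y(\o t,\o y)$, in which the subsolution side gives no usable inequality at all and the whole contradiction must come from the supersolution side alone. The paper handles this by fixing $\e$ and $\gamma$, showing $|\Psi_y(\o t,\o x,\o y)|\leq C\delta^{1/4}$ and $|\Psi_{yy}(\o t,\o x,\o y)|\leq C\e^{-1/3}\delta^{1/6}$, and using $f_m(0)=0$ with the local Lipschitz bound on $f_m$ to conclude that the right-hand side of the supersolution inequality is $o(1)$ as $\delta\to0$, contradicting the strictly negative time derivative $-\gamma/(T-\o t)^2$ supplied by the $\gamma/(T-t)$ barrier. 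Your proposal neither identifies this case nor contains the quantitative mechanism (the interplay between the quartic confinement parameter $\delta$ and the fixed $\e,\gamma$) needed to close it. Your treatment of the genuinely non-degenerate case via the theorem of sums, and your remark on controlling $f_m(p_\e)(X_\e-Y_\e)$, are in the right spirit and consistent with how the paper defers to \cite{GigaGotoIshiiSato91} there.
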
 

\begin{proof} 
The proof is a modification of the proof of \cite[Theorem 1.1]{GigaGotoIshiiSato91}. In fact, Theorem \ref{t:CP:HJk} fits to the general setting in \cite{GigaGotoIshiiSato91} except for the restriction of the class of test functions in Definition \ref{d:HJk:VS}. Therefore, we will be brief when following similar arguments from \cite[Theorem 1.1]{GigaGotoIshiiSato91}.

To force a contradiction, suppose that 
\begin{equation} \label{pf:wb}
  2 c_0 := \sup_{Q_T} (u-v) > 0 
\end{equation}
for some $T > 0$. We double the spatial variable and set 
\begin{align*}  
  w(t,x,y) &:= u(t,x) - v(t,y), \\
  \Psi(t,x,y) &:= \frac{(x-y)^4}{4\e} + \frac\delta4 (x^4 + y^4) + \frac\gamma{T-t}
\end{align*}
for some (small) parameters $\e, \delta, \gamma > 0$ (the use of fourth-order confinement is a minor difference with \cite{GigaGotoIshiiSato91} in which second-order confinement is used). For all $\delta, \gamma > 0$ small enough with respect to $c_0$, we have
\[
  \sup_{(0,T) \times \R^2} (w - \Psi) > c_0.
\]
Since $u$ and $v$ are uniformly bounded, we have for $\e$ small enough with respect to $c_0$ that there exist $\o t \in (0,T)$ and $\o x, \o y \in R$ such that \comm{p.99 for details}
\begin{gather*}
  (w - \Psi)(\o t,\o x,\o y) = \sup_{(0,T) \times \R^2} (w - \Psi), \\
  |\o x - \o y| \leq C \sqrt[4]\e,
  \qquad |\o x| + |\o y| \leq \frac{C}{\sqrt[4] \delta}.
\end{gather*}

We are going to use $\phi(t,x) := \Psi(t,x, \o y)$ as a test function for $u$ and $\psi(t,y) := - \Psi(t,\o x, y)$ as a test function for $v$ (while these test functions are not defined at and beyond $T$, only the local behaviour around $\o t, \o x$ (resp.\ $\o t, \o y$) is relevant; we omit the straightforward details on how to fix this). Note that 
\begin{align*}
  \Psi_x(t,x,y) &
  = \frac1\e (x-y)^3 + \delta x^3,
  & 
  \Psi_{xx}(t,x,y) 
  &= \frac3\e (x-y)^2 + 3\delta x^2, \\
  \Psi_y(t,x,y) &
  = \frac1\e (y-x)^3 + \delta y^3,
  & 
  \Psi_{yy}(t,x,y) 
  &= \frac3\e (y-x)^2 + 3\delta y^2.
\end{align*}
In view of Definition \ref{d:HJk:VS}, we set
\[
  \o \phi_x := \phi_x(\o t,\o x) = \Psi_x (\o t,\o x,\o y)
\]
and define similarly $\o \psi_y$, $\o \phi_t$ and $\o \psi_t$. Note that 
\begin{equation} \label{pf:wa}
  \o \phi_t = - \o \psi_t = \frac\gamma{(T-\o t)^2} > 0.
\end{equation}

To reach a contradiction, we keep $\e, \gamma$ fixed and take $\delta$ small enough with respect to $c_0, \e, \gamma$. We split several cases depending on whether $\o \phi_x$ and $\o \psi_y$ are $0$ or not.
If $\o \phi_x \neq 0 \neq \o \psi_y$, then Definition \ref{d:HJk:VS} provides the same estimate as the usual definition of viscosity solutions, and thus the proof of \cite[Theorem 1.1]{GigaGotoIshiiSato91} applies with obvious modifications (needed because of our fourth-order confinement instead of second-order confinement) to reach a contradiction with \eqref{pf:wb}. In the other cases, by the symmetry in $x$ and $y$, we may assume that $\o \phi_x = 0$, i.e.
\begin{equation} \label{pf:vl}
  \o y = \big(1 - \sqrt[3]{\e \delta} \big) \o x.
\end{equation}
If also $\o \psi_y = 0$, then a similar expression holds in which $\o x$ and $\o y$ are exchanged, and thus $\o x = \o y = 0$. Then, $\phi(t,x)$ is of the form $\theta |x - \o x|^4 + g (t)$, and thus by \eqref{visc-subsol-cond-k} we have $\o \phi_t \leq 0$. This contradicts with \eqref{pf:wa}.

It is left to consider the case $\o \phi_x = 0 \neq \o \psi_y$. Then, by \eqref{visc-supersol-cond-k},
\begin{equation*}
  -\frac\gamma{(T - \o t)^2}
  = \o \psi_t 
  \geq f_m (\o \psi_y) \o \psi_{yy} + U'(\o y) |\o \psi_y|
  = - f_m (- \o \Psi_y) \o \Psi_{yy} + U'(\o y) |\o \Psi_y|.
\end{equation*}
To reach a contradiction, we show that the right-hand side vanishes in absolute value as $\delta \to 0$. Recalling $|\o x|, |\o y| \leq C / \sqrt[4] \delta$ and \eqref{pf:vl} we obtain for $\delta$ small enough
\begin{align*}
  |\o \Psi_y|
  &\leq \frac1\e |\o y - \o x|^3 + \delta |\o y|^3
  = \frac1\e \Big| - \sqrt[3]{\e \delta} \o x \Big|^3 + \delta |\o y|^3
  \leq C \delta^{1/4} \\
  |\o \Psi_{yy}|
  &\leq \frac3\e (\o y - \o x)^2 + 3 \delta |\o y|^2
  = 3 \e^{-1/3} \delta^{2/3} |\o x|^2 + 3 \delta |\o y|^2
  \leq C \e^{-1/3} \delta^{1/6}.
\end{align*}
Then, recalling that $f_m(0) = 0$ and that $f_m$ is Lipschitz (see Lemma \ref{l:f3}), we obtain 
\[
  \big| - f_m (- \o \Psi_y) \o \Psi_{yy} + U'(\o y) |\o \Psi_y| \big|
  \leq C \frac{ \delta^{5/12} }{ \e^{1/3} } + C' \delta^{1/4},
\]
which indeed vanishes as $\delta \to 0$. 
\end{proof}

\newpage

\section{Convergence of \eqref{HJe} to \eqref{HJk}} 
\label{s:conv}

In this section we prove the convergence of \eqref{HJe} to \eqref{HJk} as $\e \to 0$ for $m=1,2,3$; see Theorem \ref{t:conv}. \eqref{HJe} and \eqref{HJk} are the integrated versions of respectively \eqref{Pn} and ($P^m$); see Section \ref{s:HJ} for details. The proof of Theorem \ref{t:conv} is the essence of the proof of the second main result in this paper, which is Theorem \ref{t}. 

\begin{thm}[Convergence as $\e\to0$] \label{t:conv}
Let $m \in \{1,2,3\}$ and $\alpha_\e, \alpha, \beta$ be as in \eqref{alphaem}. Let $U$ and $V$ satisfy Assumption \ref{a:UV}. Let $u_\e$ be a sequence of subsolutions of \eqref{HJe}, and assume that $\| u_\e \|_\infty$ is uniformly bounded. Set $\o u := \limsup^* u_\e$. Then $\o u$ is a subsolution of \eqref{HJk}.

Similarly, if $v_\e$ is a sequence of uniformly bounded  supersolutions of \eqref{HJe}, then $\underline v := \liminf_* v_\e$ is a supersolution of \eqref{HJk}.
\end{thm}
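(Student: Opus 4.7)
The plan is to apply the standard half-relaxed limits method of Barles--Perthame, which reduces the entire question to (i) a routine maximum-point extraction and (ii) continuous convergence of the Hamiltonian acting on test functions of two distinguished types. The two nontrivial limits are supplied by Lemmas \ref{l:conv:RHS} and \ref{l:parabola}, so the theorem itself is essentially a packaging exercise. I will treat only the subsolution case; the supersolution case is identical under the replacements $\max \rightsquigarrow \min$, $\limsup^* \rightsquigarrow \liminf_*$, and $\o H_{\rho,\e} \rightsquigarrow \underline H_{\rho,\e}$.

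First I would record that $\o u := \limsup^* u_\e$ is upper semicontinuous and bounded by $\sup_\e \|u_\e\|_\infty$, and let $\phi \in C^m(Q)$ be any test function at which $\o u - \phi$ attains a global maximum at $(\o t, \o x)$. As remarked below Definition \ref{d:HJe:VS}, by adding $\delta |x - \o x|^4 + \delta(t - \o t)^2$ to $\phi$ I may assume that the maximum is strict, equals zero, and that $(\o u - \phi)(t,x) \to -\infty$ as $|t| + |x| \to \infty$. The standard half-relaxed limits argument then produces global maxima $(t_\e, x_\e)$ of $u_\e - \phi$ with $(t_\e, x_\e) \to (\o t, \o x)$ and $u_\e(t_\e, x_\e) \to \o u(\o t, \o x)$; for $\e$ small enough, $t_\e > 0$. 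I then split into cases according to $\phi_x(\o t, \o x)$.

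In the case $\phi_x(\o t, \o x) \neq 0$, continuity gives $\phi_x(t_\e, x_\e) \neq 0$ eventually, and Definition \ref{d:HJe:VS} applied to $u_\e$ at $(t_\e, x_\e)$ yields $\phi_t(t_\e, x_\e) \leq \o H_{\rho, \e}[\phi, u_\e](t_\e, x_\e)$. Passing to the limit via the continuous convergence supplied by Lemma \ref{l:conv:RHS} produces the subsolution inequality required by Definition \ref{d:HJ1:VS} (for $m=1$) or Definition \ref{d:HJk:VS} (for $m=2,3$). If instead $\phi_x(\o t, \o x) = 0$ and $\phi$ is not of the distinguished quartic form, the required inequality is $\phi_t(\o t, \o x) \leq +\infty$, hence automatic.

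The remaining case is $\phi(t,x) = \theta|x - \o x|^4 + g(t)$, where I must show $g'(\o t) \leq 0$. Since $\phi_x(t,x) = 4\theta(x - \o x)^3$, the identity $\phi_x(t_\e, x_\e) = 0$ is equivalent to $x_\e = \o x$; if this holds along a subsequence then $\phi$ is of the distinguished form centred at $x_\e$, so the subsolution condition directly yields $g'(t_\e) \leq 0$ and hence $g'(\o t) \leq 0$ in the limit. Otherwise $\phi_x(t_\e, x_\e) \neq 0$ eventually, so Definition \ref{d:HJe:VS} gives $\phi_t(t_\e, x_\e) \leq \o H_{\rho, \e}[\phi, u_\e](t_\e, x_\e)$, and Lemma \ref{l:parabola} will be used to show that the right-hand side vanishes as $\e \to 0$. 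The real difficulty sits inside these two lemmas: Lemma \ref{l:conv:RHS} will require a careful Taylor expansion of $V_{\alpha_\e}''$ together with control of the sawtooth perturbation $E_\e[\gamma] - \gamma$ (especially delicate in the $m=3$ regime, where individual steps of $E_\e$ generate the $k$-indexed sum defining $f_3$), while Lemma \ref{l:parabola} will demand a case split on the relative sizes of $\e$ and $x_\e - \o x$ in order to keep the singularity of $\cM_\e$ from overwhelming the vanishing factor $|\phi_x(t_\e, x_\e)|$.
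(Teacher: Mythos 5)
Your proposal follows essentially the same route as the paper's proof: half-relaxed limits, the three-way case split on the test function at $(\o t,\o x)$, and delegation of the two nontrivial limits to Lemmas \ref{l:conv:RHS} and \ref{l:parabola}, including the correct observation that in the quartic case with $x_\e \neq \o x$ the factor $|\phi_x(t_\e,x_\e)| = 4\theta|x_\e-\o x|^3$ combined with the uniform bound of Lemma \ref{l:parabola} yields a quantity of order $|x_\e-\o x|$ that vanishes. The only step you gloss over is the far-field part $\int_{B_\rho^c} E_\e^*[u_\e(t_\e,x_\e+z)-u_\e(t_\e,x_\e)]\,V_{\alpha_\e}''(z)\,dz$ of the Hamiltonian, which is \emph{not} covered by Lemma \ref{l:conv:RHS} (that lemma only treats the integral over $B_1$ with $\phi$ in the integrand): in the paper, for $m=1$ its $\limsup$ is controlled by the corresponding integral of $\o u$ via upper semicontinuity, while for $m=2,3$ it is shown to vanish since it is $O(\alpha_\e^2|V'(\alpha_\e)|)$ and $y^2|V'(y)|\to 0$ as $y\to\infty$.
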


We proceed the proof by a formal outline. The proof uses the standard steps of convergence of viscosity solutions to reduce Theorem \ref{t:conv} to the continuous convergence of the Hamiltonian (see Definition \ref{d:Hpe}) evaluated at any test function $\phi$. We recall that a sequence $H_\e : Q \to \R$ converges continuously to some $H : Q \to \R$ if for all $(\o t, \o x) \in Q$ and all $(t_\e, x_\e) \to (\o t, \o x)$ we have that $H_\e(t_\e, x_\e) \to H(t,x)$. By the structure of Definition \ref{d:HJe:VS}  it is natural to split the cases $\phi_x (\o t, \o x) \neq 0$ and $\phi_x (\o t, \o x) = 0$.  

In the case $\phi_x (\o t, \o x) \neq 0$, we have argued that the formal expressions of the Hamiltonians in \eqref{HJk} and \eqref{HJe} are well-defined when $u$ is replaced by $\phi$. It is then essentially left to derive the limit of
\begin{equation} \label{pf:uz}
  \cM_\e[\phi](t_\e, x_\e) = \pv \int_\R E_\e [\phi_x (t_\e, x_\e + z) - \phi_x (t_\e, x_\e)] V_{\alpha_\e}''(z) dz 
\end{equation}
as $\e \to 0$. This limit passage is done rigorously in Lemma \ref{l:conv:RHS} below, which can be seen as a generalization of \cite[Lemma 3.14]{VanMeursPeletierPozar22} to general $V$ and to the new cases $m=2,3$. We give here a formal sketch of the proof where we ignore technical issues around $z = 0$ and when $|z|$ is large. The main difficulty is the presence of the discontinuous function $E_\e$. In cases $m=1,2$ we remove it by writing $E_\e[\gamma] = \gamma + (E_\e[\gamma] - \gamma)$ as the sum of the identity and a perturbation in the form of a sawtooth function which is uniformly bounded in absolute value by $\frac\e2$ (recall Figure \ref{fig:Ee} on page \pageref{fig:Ee}). The identity part leads to the desired limit; the difficulty is in showing that the contribution of the sawtooth perturbation vanishes as $\e \to 0$. 

For $m=3$, however, the contribution of the  sawtooth perturbation does not vanish in the limit $\e \to 0$. In fact, the $k$th summand in the sum over $k$ in the definition of $f_3$ in \eqref{f23} corresponds to the $k$th and $-k$th steps of $E_\e$. To see where this comes from, let $\psi$ be the inverse of $z \mapsto \phi_x (t_\e, x_\e + z) - \phi_x (t_\e, x_\e)$. Then,
\begin{align} \notag 
  \cM_\e[\phi](t_\e, x_\e) 
  &= \pv \int_\R E_\e [y] V_{\alpha_\e}''(\psi(y)) \psi'(y) dy \\\notag
  &= \sum_{\ell \in \Z} \int_{\e \ell}^{\e(\ell+1)} \e \Big(  \ell + \frac12 \Big) V_{\alpha_\e}''(\psi(y)) \psi'(y) dy \\ \label{pf:uy}
  &= \sum_{k = 0}^\infty \int_{\e k}^{\e(k+1)} \e \Big(  k + \frac12 \Big) \big( V_{\alpha_\e}''(\psi(y)) \psi'(y) - V_{\alpha_\e}''(\psi(-y)) \psi'(-y) \big) dy.
\end{align}
To continue this computation, we Taylor expand both $\psi$ around $0$ and $V_{\alpha_\e}''$ around $\psi(0) = 1/\phi_x ( t_\e,  x_\e)$. The validity of the Taylor expansion on $V$ requires sufficient regularity. Moreover, to control the remainder term we need nonlocal information on $V$; hence the appearance of $\V$ in Assumption \ref{a:UV}.

In the case $\phi_x (\o t, \o x) = 0$, we may assume by Definitions \ref{d:HJ1:VS} and \ref{d:HJk:VS} that $\phi(t,x) = \theta |x - \o x|^4 + g(t)$. However, there is no reason for $\phi_x(t_\e, x_\e)$ to be precisely equal to $0$, and thus we have to derive the limit of $\cM_\e[\phi](t_\e, x_\e)$ in \eqref{pf:uz} again. This limit passage is done rigorously in Lemma \ref{l:parabola}. On the one hand this is easier than the case $\phi_x (\o t, \o x) \neq 0$ since the expression of $\phi$ is explicit, the function $g(t)$ in this expression is of little importance, and the limit Hamiltonian is $0$ (i.e.\ we have to prove $\cM_\e[\phi](t_\e, x_\e) \to 0$ as $\e \to 0$). On the other hand, $z \mapsto \phi_x (t_\e, x_\e + z) - \phi_x (t_\e, x_\e)$ is only invertible in an $\e$-dependent neighborhood around $0$ (it reaches its extremal point at $z = \o x - x_\e$), and thus we cannot simply use the change of variables in \eqref{pf:uy} again. To overcome this, we split two cases depending on the value of $\o x - x_\e$. If it is large enough with respect to $\e$, we apply \eqref{pf:uy} on a subdomain of the integral. Outside of this domain, the rough estimate $|E_\e(\gamma)| \leq |\gamma| + \frac\e2$ turns out to be sufficient. When $\o x - x_\e$ is small enough with respect to $\e$, then $z \mapsto \phi_x (t_\e, x_\e + z) - \phi_x (t_\e, x_\e)$ is almost flat around $z = 0$, and thus, similar to Lemma \ref{l:Hpe}\ref{l:Hpe:Bp}, the part of the integral on $B_\rho$ vanishes for some $\rho > 0$ which is large with respect to $\e$. For the part of the integral on $B_\rho^c$ it turns out that the rough estimate $|E_\e(\gamma)| \leq |\gamma| + \frac\e2$ is again sufficient. 

In addition to the convergence of $\cM_\e[\phi](t_\e, x_\e)$, Lemma \ref{l:parabola} also provides a quantitative upper bound in terms of $x_\e - \o x$, which moreover holds when $x_\e - \o x$ or $\e$ is not small. This upper bound is not required for the proof of Theorem \ref{t:conv}; we need it in the proof of Theorem \ref{t} to construct barriers. The proof for this upper bound requires a further case splitting for when $x_\e - \o x$ or $\e$ are small enough or not. 

Finally, we mention a crucial difference between Lemma \ref{l:parabola} and the corresponding result \cite[Lemma 15]{VanMeursPeletierPozar22}. In the latter, a quadratic rather than fourth-order test function is used. Then, the splitting of the integral in the parts $B_\rho$ and $B_\rho^c$ is enough, and no splitting of cases or inverse functions are needed. While this argument works for potentials with at most logarithmic singularities, it does not apply to stronger singularities (such as those for Riesz potentials). Hence, we consider Lemma \ref{l:parabola} as a novel result.

\begin{proof}[Proof of Theorem \ref{t:conv}]
We follow roughly the proof of the corresponding convergence result in \cite[Theorem 3.13]{VanMeursPeletierPozar22}. However, we give the proof in full detail because of the general potentials $U$ and $V$ and because of the dependence of $\alpha_\e$ on $\e$. The main difference with the proof of \cite[Theorem 3.13]{VanMeursPeletierPozar22} are Lemmas \ref{l:conv:RHS} and \ref{l:parabola}. 

We only prove the subsolution case; the proof for supersolutions is analogous. We start with the case $m=1$ in which $\alpha_\e \to \alpha > 0$ as $\e \to 0$. To show that $\o u$ is a subsolution, we show that it satisfies Definition \ref{d:HJ1:VS}. By Lemma \ref{l:HJ:rho} we may do this for $\rho = 1$. Let $\phi$ be a test function for $\o u$ such that $\o u - \phi$ has a strict maximum at $(\o t,\o x)$. We split 2 cases: $\phi_x (\o t,\o x) \neq 0$ and $\phi_x (\o t,\o x) = 0$. If $\phi_x (\o t,\o x) \neq 0$, then we need to prove that $\phi$ satisfies 
\begin{multline} \label{pf:zz} 
  \phi_t(\o t, \o x)
  \leq \bigg( \int_{B_\rho} \big( \phi(\o t, \o x+z)-\phi(\o t, \o x) - \phi_x(\o t, \o x)z\big) V_\alpha''(z) \, dz \\
   + \int_{B_\rho^c} \big( \o u(\o t, \o x+z) - \o u(\o t, \o x)\big) V_\alpha''(z) \, dz + U'(\o x) \bigg) \, |\phi_x(\o t, \o x)|. 
\end{multline} 
If $\phi_x (\o t,\o x) = 0$, then we may assume that $\phi(t,x) = \theta |x - \o x|^4 + g(t)$ for some $\theta \in \R$ and some $g \in C^2([0,T))$, and prove for such a $\phi$ that 
\begin{equation} \label{pf:zv}
  \phi_t(\o t, \o x)
  \leq 0.
\end{equation}

For both cases, we obtain by the usual argument that, along a subsequence of $\e$, $u_\e-\phi$ has a global maximum at $(t_\e,x_\e)$, where $(t_\e,x_\e)$ satisfies $(t_\e,x_\e)\to (\o t,\o x)$ and $u_\e(t_\e,x_\e) \to \o u(\o t,\o x)$ as $\e\to0$. 
In what follows, $\e$ is taken along such a subsequence. 
Since $u_\e$ is a subsolution of \eqref{HJe} and $\phi$ is an admissible test function at the point $(t_\e,x_\e)$, we have that $(\phi, u_\e)$ satisfies the inequality in Definition \ref{d:HJe:VS} at $(t_\e,x_\e)$. From this inequality we will prove that \eqref{pf:zz} or \eqref{pf:zv} hold by passing to the limit $\e \to 0$.

To do this, we separate the two cases belonging to \eqref{pf:zz} and \eqref{pf:zv}. We first treat the case $\phi_x (\o t,\o x) \neq 0$. By the continuity of $\phi_x$ we also have $\phi_x (t_\e,x_\e) \neq 0$ for all $\e$ small enough. Then, by Definition \ref{d:HJe:VS},
\begin{multline} \label{pf:we} 
  \phi_t(t_\e, x_\e)
  \leq \bigg( \pv \int_{B_\rho} E_\e^* \big[ \phi(t_\e, x_\e+z)-\phi(t_\e, x_\e) \big] V_{\alpha_\e}''(z) \, dz \\
   + \int_{B_\rho^c} E_\e^* \big[ u_\e(t_\e, x_\e+z) - u_\e(t_\e, x_\e)\big] V_{\alpha_\e}''(z) \, dz + U'(x_\e) \bigg) \, |\phi_x(t_\e, x_\e)|. 
\end{multline}
We pass to the limit $\e \to 0$ separately for each of the five components in \eqref{pf:we}. Clearly, $\phi_t(t_\e, x_\e) \to \phi_t(\o t, \o x)$, $\phi_x(t_\e, x_\e) \to \phi_x(\o t, \o x)$ and $U'(x_\e) \to U'(\o x)$ as $\e \to 0$. This leaves the two components corresponding to the two integrals. Noting that $V_{\alpha_\e}''(x) \to V_{\alpha}''(x)$ as $\e \to 0$ pointwise for all $x \neq 0$, it follows from the proof of \cite[Theorem 3.13]{VanMeursPeletierPozar22} that the limsup of the integral over $B_\rho^c$ in \eqref{pf:we} is bounded from above by the integral over $B_\rho^c$ in \eqref{pf:zz}. Finally, the key step is the limit passage of the integral over $B_\rho$. This is done in Lemma \ref{l:conv:RHS}. 

Next we treat the second case in which $\phi ( t, x) = \theta |x - \o x|^4 + g(t)$. Since $\o u$ is bounded and $\phi$ is an admissible test function, we have $\theta \geq 0$. By the usual approximation argument we may further assume that $\theta > 0$ and that $g(t) \to \infty$ as $t \to \infty$. If $x_\e = \o x$ along a (further) subsequence of $\e$, then $\phi_x(t_\e, x_\e) = 0$, and thus Definition \ref{d:HJe:VS} states that $\phi_t(t_\e, x_\e) \leq 0$. Passing to the limit $\e \to 0$, \eqref{pf:zv} follows. Hence, we may assume that $x_\e \neq \o x$ for all $\e$ small enough. Then, $\phi_x(t_\e, x_\e) \neq 0$ for all $\e$ small enough, and thus \eqref{pf:we} holds. Except for the integral over $B_\rho$, we can pass to the limit in $\e \to 0$ in each term in \eqref{pf:we} by repeating the arguments used for the case $\phi_x (\o t,\o x) \neq 0$. For the integral over $B_\rho$, we substitute $\phi (t, x) = \theta |x - \o x|^4 + g(t)$ and apply Lemma \ref{l:parabola} to obtain
\begin{multline*}
  | \phi_x(t_\e, x_\e) | \bigg( \pv \int_{B_\rho} E_\e^* \big[ \phi(t_\e, x_\e+z)-\phi(t_\e, x_\e) \big] V_{\alpha_\e}''(z) \, dz \bigg) \\
  = 4\theta |x_\e - \o x|^3 \bigg( \pv \int_{B_\rho} E_\e^* \big[ \theta \big( (z + (x_\e - \o x))^4 - (x_\e - \o x)^4 \big) \big] V_{\alpha_\e}''(z) \, dz \bigg)
  \leq C |x_\e - \o x|,
\end{multline*}
which vanishes as $\e \to 0$. This proves Theorem \ref{t:conv} for the case $m=1$.
\medskip 

Next we treat simultaneously the cases $m=2,3$ in which either $1 \ll \alpha_\e \ll \frac1\e$ or $\e \alpha_\e \to \beta > 0$ as $\e \to 0$. The argument is similar to the case $m=1$. Let $\phi$, $\o t$, $\o x$, $t_\e$ and $x_\e$ be as in case $m=1$ (if $m = 3$, we require in addition that $\phi \in C^3(Q)$). The analogue of \eqref{pf:zz} and \eqref{pf:zv} is
\begin{equation} \label{pf:wd}
  \phi_t(\o t, \o x)
  \leq \big( f_m (\phi_x) \phi_{xx} + U' |\phi_x| \big) (\o t, \o x). 
\end{equation}
Indeed, if $\phi_x(\o t, \o x) = 0$, then the right-hand side in \eqref{pf:wd} equals $0$, which is consistent with \eqref{pf:zv}. Again, we split the case $\phi_x(\o t, \o x) \neq 0$ from $\phi_x(\o t, \o x) = 0$. In the former, we obtain that for all $\e$ small enough \eqref{pf:we} holds. Passing to the limit in the terms in \eqref{pf:we} is obvious, except for the two terms given by the integrals. For the integral over $B_\rho^c$ we use the uniform bound on $u_\e$ and $\rho = 1$ to obtain
\begin{multline*}
  \int_{B_\rho^c} E_\e^* \big[ u_\e(t_\e, x_\e+z) - u_\e(t_\e, x_\e)\big] V_{\alpha_\e}''(z) \, dz 
  \leq ( 4 \| u_\e \|_\infty + \e ) \int_1^\infty \alpha_\e^3 V''(\alpha_\e z) \, dz \\
  \leq C \int_{\alpha_\e}^\infty \alpha_\e^2 V''(y) \, dy 
  = C \alpha_\e^2 |V'(\alpha_\e)|,
\end{multline*}
which, due to $y^2 |V'(y)| \to 0$ as $y \to \infty$, vanishes as $\e \to 0$. The limit of the integral over $B_\rho$ is given in Lemma \ref{l:conv:RHS}.

The second case $\phi_x(\o t, \o x) = 0$ follows from a similar argument as used in the case $m=1$. The key estimate is again given by Lemma \ref{l:parabola}. This completes the proof of Theorem \ref{t:conv}.
\end{proof}

\begin{lem}[Convergence of the right-hand side of \eqref{HJe}] \label{l:conv:RHS}
Let $m \in \{1,2,3\}$ with corresponding $\alpha_\e, \alpha, \beta > 0$ as in \eqref{alphaem}. Let $(\o t, \o x) \in Q$, $\phi \in C^{2 \vee m}(Q)$ with $\phi_x(\o t, \o x) \neq 0$ and $(t_\e, x_\e) \to (\o t, \o x)$ as $\e \to 0$. Then,
\begin{multline*}
  \lim_{\e \to 0} \pv \int_{-1}^1 E_\e^* [\phi(t_\e, x_\e + z) - \phi(t_\e, x_\e)] V_{\alpha_\e}''(z) \, dz \\
  = \left\{ \begin{aligned}
    &\int_{-1}^1 \big( \phi(\o t, \o x+z)-\phi(\o t, \o x) - \phi_x(\o t, \o x)z\big) V_\alpha''(z) \, dz
    &&\text{if } m =1  \\
    &\Big( \frac{f_m(\phi_x)}{|\phi_x|} \phi_{xx} \Big) (\o t, \o x)
    &&\text{if } m =2,3.
  \end{aligned} \right.
\end{multline*}
\end{lem}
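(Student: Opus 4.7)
My plan is to treat $m=1$ by splitting the step function into an identity and a sawtooth, and to treat $m=2,3$ via a change of variables that exposes the scaling of $V''_{\alpha_\e}$. Since $p := \phi_x(\o t, \o x) \neq 0$ and $p_\e := \phi_x(t_\e, x_\e) \to p$, for all $\e$ small enough the map $z \mapsto \phi_\e(z) := \phi(t_\e, x_\e + z) - \phi(t_\e, x_\e)$ will be strictly monotone on a fixed neighborhood of $0$, with smooth local inverse $\psi_\e$ satisfying $\psi_\e(0) = 0$, $\psi_\e'(0) = 1/p_\e$ and $\psi_\e''(0) = -\phi_{xx}(t_\e, x_\e)/p_\e^3$.

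In case $m=1$ I will decompose $E_\e^*[\gamma] = \gamma + (E_\e^*[\gamma] - \gamma)$. The identity part reduces, via $\pv \int_{-1}^1 z\, V''_{\alpha_\e}(z)\, dz = 0$, to $\int_{-1}^1 [\phi_\e(z) - p_\e z]\, V''_{\alpha_\e}(z)\, dz$, and dominated convergence finishes it, using the quadratic bound on $\phi_\e(z) - p_\e z$ together with the local integrability of $z^2 V''$ from Corollary \ref{c:V:1}. For the sawtooth remainder I will fix $\delta > 0$, choose $\rho_\e \in (0,\delta)$ with $|\phi_\e| < \e/2$ on $B_{\rho_\e}$, and observe that Lemma \ref{l:Hpe}\ref{l:Hpe:Bp} kills the principal value on $B_{\rho_\e}$; the shell $B_\delta \setminus B_{\rho_\e}$ contributes $O(\delta)$ and the outer part $B_1 \setminus B_\delta$ contributes $O(\e)$ since $\|V''_{\alpha_\e}\|_{L^1(B_1 \setminus B_\delta)}$ is uniformly bounded. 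Sending first $\e \to 0$ and then $\delta \to 0$ closes this case.

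In cases $m = 2,3$ I will substitute $z = \psi_\e(y)$ to rewrite $I_\e$ as the series in \eqref{pf:uy}. Evenness of $V''$ makes the symmetric leading parts of $V''_{\alpha_\e}(\psi_\e(\pm y))\psi_\e'(\pm y)$ cancel, leaving contributions driven by $\psi_\e(y) + \psi_\e(-y) = \psi_\e''(0)y^2 + O(y^3)$ and $\psi_\e'(y) - \psi_\e'(-y) = 2\psi_\e''(0)y + O(y^2)$. Applying the decomposition $Aa - Bb = \tfrac12(A-B)(a+b) + \tfrac12(A+B)(a-b)$ together with the identity $\partial_s[(k+s)^2 V''(\beta(k+s)/p)] = 2(k+s)V''(\cdot) + (k+s)^2 (\beta/p) V'''(\cdot)$ turns the $k$-th summand, to leading order in $\e$, into a telescoping difference. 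For $m=3$ the prefactors combine to $\e^3 \alpha_\e^3 \to \beta^3$, and Abel summation in $k$ (with boundary term $(N+\tfrac12)a_{N+1} \to 0$ by the decay of $V''$) collapses everything to $\beta^3 \phi_{xx}/p^3 \cdot \sum_{k \geq 1} k^2 V''(\beta k / p) = f_3(p)\phi_{xx}(\o t, \o x)/|p|$, where the $\sign(p)$ is absorbed by orienting the substitution by $\sign(p)$. For $m=2$ only the $k=0$ pair survives as $V''_{\alpha_\e}$ concentrates with total mass $\|V\|_{L^1(\R)}$, giving $\|V\|_{L^1} \phi_{xx} = f_2(p)\phi_{xx}/|p|$.

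The hardest step will be uniform control of the Taylor remainders and the $k$-tail for $m=3$. The bounds $\sup_{x > c} x^{k+1}|V^{(k)}(x)| < \infty$ for $k=3,4$ and $x^3 \V_3 \in L^1(1,\infty)$ supplied by Assumption \ref{a:UV:HJe:to:HJ3} and Corollary \ref{c:V:3} are precisely what is needed to make each Taylor correction $o(\e)$ uniformly in $k$ and to make the Abel boundary term vanish; the termwise passage to the limit is then legitimate, and the resulting series is identified with $f_3(p)/|p|$ by Lemma \ref{l:f3}.
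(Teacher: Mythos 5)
Your overall architecture matches the paper's: for $m=1$ you split $E_\e^*$ into identity plus sawtooth, and for $m=2,3$ you change variables via the local inverse $\psi_\e$ and telescope the resulting series, controlling the tail with the decay hypotheses on $V''$, $V'''$; the $m=3$ computation in particular is essentially the paper's. However, there is a genuine gap in your treatment of the sawtooth remainder for $m=1$. Your inner radius $\rho_\e$, defined by $|\varphi_\e|<\e/2$ on $B_{\rho_\e}$, is necessarily of order $\e/|p|$, and the crude bound $|F_\e|\leq\e/2$ gives for the shell only
\[
  \Big|\int_{B_\delta\setminus B_{\rho_\e}}F_\e[\varphi_\e(z)]\,V_{\alpha_\e}''(z)\,dz\Big|
  \leq \e\,\big|V_{\alpha_\e}'(\rho_\e)\big|,
\]
which is $O(1)$ already for $V(x)=-\log|x|$ and blows up like $\e^{-a}$ for Riesz potentials with $a\in(0,1)$ — all of which are admitted by Assumption \ref{a:UV:HJe:to:HJ1}. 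So the shell does not contribute $O(\delta)$; the absolute-value bound cannot work. The paper's proof survives here only because of a cancellation you are not using: after the substitution $y=\varphi_\e(z)$ the integrand becomes $F_\e[y]\,V_{\alpha_\e}''(\psi_\e(y))\psi_\e'(y)$, whose leading part $p^{-1}\,F_\e[y]\,V_{\alpha_\e}''(y/p)$ is the product of the odd function $F_\e$ with an even function and hence integrates to zero over the symmetric shell; only the Taylor remainders survive, and those are controlled by $\e\int_\e^\delta y\,V''(py)\,dy$ via Corollary \ref{c:V:1}\ref{c:V:1:int}. You need this oddness argument (or an equivalent one) to close the $m=1$ case.

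A secondary issue: your stated mechanism for $m=2$ is not right. It is $V_{\alpha_\e}$, not $V_{\alpha_\e}''$, that concentrates to $\|V\|_{L^1(\R)}\delta_0$, and there is no sense in which ``only the $k=0$ pair survives'' — in the regime $\beta_\e=\e\alpha_\e\to0$ the whole telescoped series contributes as a Riemann sum. The clean routes are either the paper's (split off the identity part and integrate by parts twice, so that $\int\varphi_\e''\,V_{\alpha_\e}\to\|V\|_{L^1(\R)}\varphi''(0)$, with the boundary terms killed by Corollary \ref{c:V:2}\ref{c:V:2:xinf}), or, if you insist on the series, to identify $\beta_\e^3\sum_k k^2V''(\beta_\e k/p)$ as a Riemann sum converging to $p^3\int_0^\infty y^2V''(y)\,dy = \tfrac{p^3}{2}\|V\|_{L^1(\R)}\cdot 2$ after two integrations by parts; either way the argument needs to be made explicit rather than attributed to concentration of $V_{\alpha_\e}''$.
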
  
 
\begin{proof}
Since the dependence of $\phi$ on the time variable is of little importance, we simplify notation by setting \comm{Checked 13 March: On m=3 we dont need anything at $x=0$ :).}
\[
  \phi_\e(x) := \phi( t_\e, x) \qand \phi(x) := \phi(\o t, x).
\] 
To simplify the integrands, we further introduce
\[
  \varphi_\e (z) := \phi_\e(x_\e + z) - \phi_\e(x_\e)
  \qand
  \varphi (z) := \phi(\o x + z) - \phi(\o x).
\]
In the remainder, we split the cases $m=1,2,3$.
\smallskip

\noindent \textbf{Case $m=3$.} In the case $m=3$ we have that  
$$
  \beta_\e := \e \alpha_\e \to \beta > 0
  \qquad \text{as } \e \to 0.
$$ 
We need to prove that
 \begin{equation} \label{pf:zu}
  \lim_{\e \to 0} \pv \int_{-1}^1 E_\e^* [\varphi_\e(z)] V_{\alpha_\e}''(z) \, dz
  = \frac{\beta^3}{|\varphi'(0)|^3} \Psi \Big( \frac\beta{\varphi'(0)} \Big) \varphi''(0).
\end{equation}

We start with several preparations. First, we may assume without loss of generality that $\varphi'(0) = \phi'( \o x) > 0$. To see this, suppose that $\varphi'(0) < 0$. Then, using the oddness of $E_\e^*$ we rewrite
\[
  \pv \int_{-1}^1 E_\e^* [\varphi_\e(z)] V_{\alpha_\e}''(z) \, dz
  = - \pv \int_{-1}^1 E_\e^* [-\varphi_\e(z)] V_{\alpha_\e}''(z) \, dz.
\]
Since $(-\varphi)'(0) > 0$, \eqref{pf:zu} applies to the right-hand side. This yields
\begin{align*}
  -\lim_{\e \to 0} \pv \int_{-1}^1 E_\e^* [-\varphi_\e(z)] V_{\alpha_\e}''(z) \, dz
  &= -\frac{\beta^3}{|-\varphi'(0)|^3} \Psi \Big( \frac\beta{-\varphi'(0)} \Big) (-\varphi)''(0) \\
  &= \frac{\beta^3}{|\varphi'(0)|^3} \Psi \Big( \frac\beta{\varphi'(0)} \Big) \varphi''(0).
\end{align*}

Second, since $\phi_\e \to \phi$ in $C_{\loc}^3(\R)$ as $\e \to 0$ we have $\varphi_\e \to \varphi$ in $C^3([-1,1])$ as $\e \to 0$. Since $\varphi(0) = 0$ and $\varphi'(0) > 0$, there exists a neighborhood $0 \in \cN_0 \subset (-1,1)$ on which: (i) $\varphi$ is invertible, and (ii) there exists a $\delta \in (0,1)$ such that  $\| \varphi^{-1} \|_{C(\o{B_\delta})} < 1$ and $\| \varphi^{-1} \|_{C^3(\o{ B_{\delta}})} < \infty$. Then, for all $\e > 0$ small enough, $\varphi_\e$ is also invertible on $\cN_0$ and \comm{the following holds by IFT. The bd on higher order derivatives follows from the computations below for $\psi'$ and $\psi''$, and easily extend to higher order der's}
\begin{equation} \label{pf:yr}
  \| \varphi_\e^{-1} \|_{C(\o{B_\delta})} < 1
  \qand 
  \| \varphi_\e^{-1} \|_{C^3(\o{ B_\delta })} \leq C.
\end{equation}
For ease of notation, we set 
\[
  \psi = \psi_\e := \varphi_\e^{-1} \in C^3 (\o{B_\delta}).
\]
In addition to $\psi(0) = 0$, we note that 
\begin{align*}
  \psi' &= ( \varphi_\e^{-1})' 
         = \frac1{ \varphi_\e' \circ \varphi_\e^{-1} } \\
  \psi'' &= \Big( \frac1{ \varphi_\e' \circ \varphi_\e^{-1} } \Big)'
          = - \frac{\varphi_\e'' \circ \varphi_\e^{-1}}{ (\varphi_\e' \circ \varphi_\e^{-1})^3 }.
\end{align*}
In particular,
\begin{align*}
  p = p_\e
  &:= \psi'(0)
  = \frac1{\varphi_\e'(0)}
  \xto{\e \to 0} \frac1{\varphi'(0)}
  > 0 \\
  q = q_\e
  &:= \psi''(0)
  = - \frac{\varphi_\e''(0)}{\varphi_\e'(0)^3}
  \xto{\e \to 0} - \frac{\varphi''(0)}{\varphi'(0)^3}
  \in \R.
\end{align*}
The Taylor expansions of $\psi$ and $\psi'$ on $\o{B_\delta}$ at $y=0$ are
\begin{subequations} 
\begin{align} \label{pf:yp1}
  \psi(y)
  &= \psi(0) + \psi'(0) y + \frac12 \psi''(0) y^2 + R_3(y)
  = p y + \frac q2 y^2 + R_3(y)
  &&\text{for all } |y| \leq \delta, \\\label{pf:yp2}
  \psi'(y)
  &= \psi'(0) + \psi''(0) y + R_2(y)
  = p + q y + R_2(y)
  &&\text{for all } |y| \leq \delta,
\end{align}
\end{subequations}
where, by \eqref{pf:yr}, the remainder terms $R_\ell(y)$ satisfy $|R_\ell(y)| \leq C |y|^\ell$ for $\ell = 2,3$ and for some $C > 0$ independent of $\e$ and $y$. This completes the preparations.

Next we prove \eqref{pf:zu}. In the left-hand side in \eqref{pf:zu} we split the integration domain $(-1,1)$ into four parts. With this aim, let $K = K_\e \in \N$ be such that $1 \ll K \ll 1/\e$ as $\e \to 0$; we put further restrictions on $K$ later on. We take $\e$ smaller if necessary such that $\e K < \delta$. By \eqref{pf:yr} we have that $|\psi(\pm \e K)| < 1$ for all $\e$ small enough. The four parts of $(-1,1)$ are the three intervals $\psi(B_\e)$, $(-1, \psi(-\e K))$ and $( \psi(\e K), 1)$, and the remaining set $\psi(B_{\e K}) \setminus \psi(B_{\e})$, which is the union of two intervals. The contributions of $(-1, \psi(-\e K))$ and $( \psi(\e K), 1)$ to the integral in \eqref{pf:zu} can be treated similarly; we focus on the latter. Using that 
\begin{itemize}
  \item $E_\e^*[\gamma] \leq \gamma + \e$,
  \item $\| \varphi_\e \|_{C(\o{B_1(\o x)})} \leq 2 \| \phi \|_{C(\o{B_1(\o t)} \times \o{B_2(\o x)})} \leq C$,
  \item $\psi(\e K) = p \e K + O(\e K)^2 \geq \frac p2 \e K$ for all $\e$ small enough, and
  \item $V'' \geq 0$ and $V' \leq 0$, 
\end{itemize}
we obtain that
\begin{multline} \label{pf:yj}
  \int_{\psi(\e K)}^1 E_\e^* [\varphi_\e(z)] V_{\alpha_\e}''(z) \, dz  
  \leq ( C + \e ) \big( V_{\alpha_\e}'(1) - V_{\alpha_\e}'\big( \psi(\e K) \big) \big) \\
  \leq C' \frac1{\e^2 K^2} \Big(\frac p2 \e \alpha_\e K \Big)^2 \Big| V'\Big(\frac p2 \e \alpha_\e K \Big) \Big|
\end{multline}  
for all $\e$ small enough. Since $x^2 V'(x) \to 0$ as $x \to \infty$, there exists a non-increasing function $g$ such that $x^2 |V'(x)| < g(x) \to 0$ as $x \to \infty$. Recalling that $\beta_\e = \e \alpha_\e \to \beta$, we continue the estimate above by
\begin{align*}
  C \frac1{\e^2 K^2} \Big(\frac p2 \e \alpha_\e K \Big)^2 \Big| V'\Big(\frac p2 \e \alpha_\e K \Big) \Big|
  \leq \frac{C'}{\e^2 K^2} g\Big(\frac{\beta p}3 K \Big).
\end{align*}
Hence, there exists $K = K_\e \ll \frac1\e$ such that the right-hand side vanishes in the limit $\e \to 0$. In conclusion, by taking this $K$ the contributions of the intervals $(-1, \psi(-\e K))$ and $( \psi(\e K), 1)$ to the integral in the left-hand side of \eqref{pf:zu} vanish in the limit $\e \to 0$.

Next we treat the contribution of $\psi(B_\e) = (\psi(-\e), \psi(\e))$ of the integration domain in the left-hand side of \eqref{pf:zu}. We assume for convenience that $\psi(\e) \geq -\psi(-\e)$; the other case can be treated analogously. Then,
\[
  \pv \int_{\psi(B_\e)} E_\e^* [\varphi_\e (z)] V_{\alpha_\e}'' (z) \, dz
  = \pv \int_{\psi(-\e)}^{-\psi(-\e)} E_\e^* [\varphi_\e (z)] V_{\alpha_\e}'' (z) \, dz + \int_{-\psi(-\e)}^{\psi(\e)} E_\e^* [\varphi_\e (z)] V_{\alpha_\e}'' (z) \, dz.
\]
Noting that $E_\e^* [\varphi_\e (z)] = \frac\e2 \sign (z)$ for all $z \in \psi(B_\e)$, the first integral in the right-hand side vanishes. We rewrite the second integral as
\begin{align} \notag
   \int_{-\psi(-\e)}^{\psi(\e)} E_\e^* [\varphi_\e (z)] V_{\alpha_\e}'' (z) \, dz
   &= \frac\e2 \big( V_{\alpha_\e}' (\psi(\e)) - V_{\alpha_\e}' (-\psi(-\e)) \big) \\\label{pf:yo}
   &=  \frac{ \e \alpha_\e^2}2 \big( V' (\alpha_\e \psi(\e)) + V' (\alpha_\e \psi(-\e)) \big).
 \end{align} 
As preparation for rewriting the right-hand side, we first Taylor expand $\psi(\e)$ as in \eqref{pf:yp1} and then $V'$ around $\beta_\e p$. This yields, for $\e$ small enough with respect to $p$, $q$, $\delta$ and $R_3$, that, using $V \in C^4((0,\infty))$,
\begin{align}   \notag
   V' (\alpha_\e \psi(\e))
   &= V' \Big( \beta_\e p + \e \frac{\beta_\e q}2 +  \frac{R_3(\e)}\e \beta_\e \Big) \\\notag
   &= V' (\beta_\e p)  
      + \e \beta_\e \Big( \frac{q}2 + \frac{R_3(\e)}{\e^2} \Big) V'' (\beta_\e p) \\ \label{pf:ux}
   &\qquad   + \frac{\e^2 \beta_\e^2}2 \Big( \frac{q}2 + \frac{R_3(\e)}{\e^2} \Big)^2 V''' \Big( \beta_\e p + \theta_\e \e \beta_\e \Big( \frac{q}2 + \frac{R_3(\e)}{\e^2} \Big) \Big)
\end{align} 
for some $\theta_\e \in [0,1]$. Similarly, we expand 
\begin{align*}   
   V' (\alpha_\e \psi(-\e))
   &= -V' (\beta_\e p)  
      + \e \beta_\e \Big( \frac{q}2 + \frac{R_3(-\e)}{\e^2} \Big) V'' (\beta_\e p) \\
   &\qquad   + \frac{\e^2 \beta_\e^2}2 \Big( \frac{q}2 + \frac{R_3(-\e)}{\e^2} \Big)^2 V''' \Big( -\beta_\e p + \theta_\e \e \beta_\e \Big( \frac{q}2 + \frac{R_3(-\e)}{\e^2} \Big) \Big).
\end{align*}  
Substituting these expansions in the right-hand side of \eqref{pf:yo} and using that $|V'''|$ is bounded in a neighborhood of $\lim_{\e \to 0} \beta_\e p = \beta / \varphi'(0) > 0$, we obtain
 \begin{align} \label{pf:yi}   
    \frac{ \e \alpha_\e^2}2 \big( V' (\alpha_\e \psi(\e)) + V' (\alpha_\e \psi(-\e)) \big)
   = \frac{\beta_\e^3}2 q V'' (\beta_\e p) + R(\e)
 \end{align}  
for $\e$ small enough, where the remainder $R$ satisfies $|R(\e)|\leq C \e$ for some $C$ independent of $\e$.
 In conclusion,
\begin{equation*} 
   \limsup_{\e \to 0} \pv \int_{\psi(B_\e)} E_\e^* [\varphi_\e (z)] V_{\alpha_\e}'' (z) \, dz
   = -\frac{\beta^3}2 \frac{\varphi''(0)}{\varphi'(0)^3} V'' \Big( \frac\beta{\varphi'(0)} \Big).
 \end{equation*} 

Finally, we treat the contribution of the remaining part $\psi(B_{\e K}) \setminus \psi(B_\e)$ of the integration domain in the left-hand side of \eqref{pf:zu}. Employing the variable transformation $y = \varphi_\e(z)$ we get
\begin{align} \label{hiero}
   \int_{\psi(B_{\e K}) \setminus \psi(B_\e)} E_\e^* [\varphi_\e (z)] V_{\alpha_\e}'' (z) \, dz
  = \int_{B_{\e K} \setminus B_\e} E_\e^* [y] V_{\alpha_\e}'' ( \psi (y) ) \psi' (y) \, dy.
\end{align}
Setting $f(y) := V_{\alpha_\e}'' ( \psi (y) ) \psi' (y)$, the display above equals
\begin{align} \notag
  \int_{B_{\e K} \setminus B_\e} E_\e^* [y] f(y) \, dy
  &= \sum_{k = 1}^{K-1} \bigg( \int_{\e k}^{\e(k+1)} \e \Big( k + \frac12 \Big) f(y) \, dy + \int_{-\e (k+1)}^{-\e k} - \e \Big( k + \frac12 \Big) f(y) \, dy \bigg) \\\label{pf:zr}
  &=   \sum_{k = 1}^{K-1} \Big( k + \frac12 \Big) \int_k^{k+1} \e^2  \big( f(\e x) - f(- \e x) \big) \, dx.
\end{align}

To continue the estimate, we expand $f(y)$ for $|y| \leq \e K < \delta$. Similar to \eqref{pf:ux} we get for $\e$ small enough that
\begin{align*}
  V_{\alpha_\e}'' ( \psi (y) )
  &= V_{\alpha_\e}'' \Big( p y + \frac q2 y^2 + R_3(y) \Big) \\
  &= V_{\alpha_\e}'' ( p y ) 
    + \Big( \frac q2 y^2 + R_3(y) \Big) V_{\alpha_\e}''' ( p y ) \\
  & \qquad  + \frac12 \Big( \frac q2 y^2 + R_3(y) \Big)^2 V_{\alpha_\e}^{(4)} \Big( py + \theta_y \Big( \frac q2 y^2 + R_3(y) \Big) \Big)
\end{align*}
for some $\theta_y \in [0,1]$. 
Multiplying this by the expansion of $\psi' (y)$ given by \eqref{pf:yp2} we get
\begin{multline} \label{pf:wc}
  f(y)
  = p V_{\alpha_\e}'' ( p y) 
    + q y V_{\alpha_\e}'' ( p y ) 
    + \frac{p q}2  y^2 V_{\alpha_\e}''' ( p y ) \\
    + \tilde R_2(y) V_{\alpha_\e}'' ( p y ) 
    + \tilde R_3(y) V_{\alpha_\e}''' ( p y ) 
    + \tilde R_4(y) V_{\alpha_\e}^{(4)}  \Big( py + \theta_y \Big( \frac q2 y^2 + R_3(y) \Big) \Big),
\end{multline}
where the remainder terms $\tilde R_\ell(y)$ satisfy $|\tilde R_\ell(y)| \leq C |y|^\ell$ for $\ell = 2,3,4$ and for some constant $C > 0$ independent of $y$ and $\e$. To bound the term $\tilde R_2(y) V_{\alpha_\e}'' ( p y )$ in \eqref{pf:wc} in absolute value, we use $y \geq \e$, \ $\alpha_\e p y \geq \beta_\e p \geq \beta (2 \varphi'(0))^{-1} > 0$ and $x^3 V''(x) \to 0$ as $x \to \infty$ to obtain 
\[
  \big| \tilde R_2(y) V_{\alpha_\e}'' ( p y ) \big|
  = \alpha_\e^3 \big| \tilde R_2(y) V'' ( \alpha_\e p y ) \big|
  \leq \frac C{p^3 y}
  \leq \frac{C'}y.
\]
Similarly, we bound in absolute value the terms in \eqref{pf:wc} corresponding to $\tilde R_3$ and $\tilde R_4$: by using that  $\max_{x \geq c} x^{k+1} \V_k(x) < \infty$ for $k=3,4$ and that
\[
  py + \theta_y \Big( \frac q2 y^2 + R_3(y) \Big)
  \geq \frac{py}2
  \qquad \text{for all } y \in (\e, \delta)
\] 
for $\e$ small enough, we obtain
\[
 \Big| \tilde R_3(y) V_{\alpha_\e}''' ( p y ) 
    + \tilde R_4(y) V_{\alpha_\e}^{(4)}  \Big( py + \theta_y \Big( \frac q2 y^2 + R_3(y) \Big) \Big) \Big|
    \leq \frac Cy.
\] 

Assuming for the moment that $\tilde R_\ell(y) = 0$ for each $\ell = 2,3,4$ and all $y \in (\e, \delta)$, we obtain for the integrand in \eqref{pf:zr} that
\begin{align*} 
  \e^2  \big( f(\e x) - f(- \e x) \big) 
  &= \e^2  \big[ 2 q \e x V_{\alpha_\e}'' ( p \e x ) + p q ( \e x)^2 V_{\alpha_\e}''' ( p \e x) \big] \\
  &= (\e \alpha_\e)^3 q \big[ 2 x V'' ( \e \alpha_\e p x ) + \e \alpha_\e p x^2 V''' ( \e \alpha_\e p x ) \big] \\
  &= \beta_\e^3 q \frac d{dx} \big( x^2 V'' ( \beta_\e p x ) \big).
\end{align*}
Then, without assuming that $\tilde R_\ell(y) = 0$, we obtain that
\begin{align} \label{pf:ys}
  \Big| \e^2  \big( f(\e x) - f(- \e x) \big)  - \beta_\e^3 q \frac d{dx} \big( x^2 V'' ( \beta_\e p x ) \big) \Big| 
  \leq C \frac \e x.
\end{align}
From \eqref{pf:ys} it follows that the right-hand side in \eqref{pf:zr} is approximately equal to
\begin{align} \notag
  &\beta_\e^3 q \sum_{k = 1}^{K-1} \Big( k + \frac12 \Big)  \int_k^{k+1} \frac d{dx} \big( x^2 V'' ( \beta_\e p x ) \big) \, dx \\\notag
  &= \beta_\e^3 q \sum_{k=1}^{K-1}  \Big( k + \frac12 \Big) \Big( (k+1)^2 V'' ( \beta_\e p (k+1) )  -  k^2 V'' ( \beta_\e p k )  \Big) \\\label{pf:ym}
  &= \beta_\e^3 q \Big( K - \frac12 \Big) K^2 V'' ( \beta_\e p K )
     -\beta_\e^3 q \sum_{k=1}^{K-1}  k^2 V'' ( \beta_\e p k ) 
     - \frac12 \beta_\e^3 q V'' ( \beta_\e p ),
\end{align}
where the error made by this approximation is bounded by
\begin{align*} 
  \sum_{k = 1}^{K-1} \Big( k + \frac12 \Big) \int_{k}^{k+1} C \frac\e x \, dx
  \leq C \e \sum_{k = 1}^{K-1} \Big( 1 + \frac1{2k} \Big)
  \leq C' \e K.
\end{align*} 

Now we are ready to pass to the limit $\e \to 0$ in \eqref{hiero}. The computations above show that \eqref{hiero} is equal to \eqref{pf:ym} modulo an error term which is bounded in size by $C' \e K$. Since we chose $K$ such that $\e K \ll 1$, the error term vanishes in the limit $\e \to 0$. Since $V \in C^4((0,\infty))$, the third term in \eqref{pf:ym} converges to  
\[
  \frac12 \beta^3 \frac{\varphi''(0)}{\varphi'(0)^3} V'' \Big( \frac\beta{\varphi'(0)} \Big).
\]
By $x^3 V''(x) \to 0$ as $x \to \infty$, the first term in \eqref{pf:ym} vanishes as $\e \to 0$. Finally, by Lemma \ref{l:f3} the second term in \eqref{pf:ym} converges to 
\[
  \beta^3 \frac{\varphi''(0)}{\varphi'(0)^3} \Psi \Big( \frac\beta{\varphi'(0)} \Big).
\]

In conclusion, 
\[
  \limsup_{\e \to 0} \int_{\psi(B_{\e K}) \setminus \psi(B_\e)} E_\e^* [\varphi_\e (z)] V_{\alpha_\e}'' (z) \, dz \\
  = \beta^3 \frac{\varphi''(0)}{\varphi'(0)^3} \Psi \Big( \frac\beta{\varphi'(0)} \Big) 
    + \frac12 \beta^3 \frac{\varphi''(0)}{\varphi'(0)^3} V'' \Big( \frac\beta{\varphi'(0)} \Big).
\]
Together with the limits computed above on the other parts of the integration domain, we obtain \eqref{pf:zu}. This completes the proof of Lemma \ref{l:conv:RHS} for $m=3$.
\medskip

\noindent \textbf{Case $m=2$.} In the case $m=3$ we have that
$1 \ll \alpha_\e \ll \frac1\e$ as $\e \to 0$.
We need to prove that
 \begin{equation} \label{pf:zw}
  \lim_{\e \to 0} \pv \int_{-1}^1 E_\e^* [\varphi_\e(z)] V_{\alpha_\e}''(z) \, dz
  = \|V\|_{L^1(\R)} \varphi''(0).
\end{equation}
We follow a similar computation as for $m=3$ with simplifications whenever possible. Again, we may assume that $\varphi'(0) > 0$. Let $\delta, \psi, p, q$ be given as in the preparatory step in the case $m=3$. Since $\phi \in C^2(Q)$ instead of $C^3(Q)$, we have that $\varphi_\e$ and $\psi$ are of class $C^2$. Then, \eqref{pf:yr} changes into
\begin{equation} \label{pf:vj} 
  \| \psi \|_{C(\o{B_\delta})} < 1
  \qand 
  \| \psi \|_{C^2(\o{ B_\delta })} \leq C
\end{equation}
and, instead of the expansions in \eqref{pf:yt}, we have to work with the shorter expansions given by
\begin{align*} \label{pf:yp1}
  \psi(y)
  = p y + R_2(y)
  \qand
  \psi'(y) = p + R_1(y)
  \qquad \text{for all } |y| \leq \delta,
\end{align*} 
where, due to \eqref{pf:vj}, $R_\ell(y)$ satisfy $|R_\ell(y)| \leq C |y|^\ell$ with $C > 0$ independent of $\e$.

A difference with the case $m=3$ is that here the leading-order term can easily be split off from the integral in the left-hand side of \eqref{pf:zw}. With this aim, we split
\begin{equation} \label{pf:yh}
  \pv \int_{-1}^1 E_\e^* [\varphi_\e(z)] V_{\alpha_\e}''(z) dz
  = \pv \int_{-1}^1 F_\e [\varphi_\e(z)] V_{\alpha_\e}''(z) dz + \pv \int_{-1}^1 \varphi_\e(z) V_{\alpha_\e}''(z) dz,
\end{equation} 
where 
\[
  F_\e[x] := E_\e^*[x] - x
\] 
is a sawtooth function (recall Figure \ref{fig:Ee} on page \pageref{fig:Ee}). Note that 
\[
  \| F_\e \|_\infty \leq \frac\e2. 
\]   
For the second integral in the right-hand side of \eqref{pf:yh} we apply integration by parts twice. Since $V$ is singular, this requires justification. For this we refer to \cite[Lemma 3.7]{VanMeursTanaka23} and its proof. Then, integrating by parts twice, we obtain
\[
  \pv \int_{-1}^1 \varphi_\e(z) V_{\alpha_\e}''(z) \, dz
  = \int_{-1}^1 \varphi_\e''(z) V_{\alpha_\e}(z) \, dz
    + \big[ \varphi_\e V_{\alpha_\e}' -  \varphi_\e' V_{\alpha_\e} \big]_{z=-1}^1.
\]
Since $\alpha_\e \to \infty$ as $\e \to 0$, the boundary terms vanish as $\e \to 0$.  
Since $\varphi_\e'' \to \varphi'' $ in $ C([-1,1])$ and $V_{\alpha_\e} \weakto \|V\|_{L^1(\R)} \delta_0$ in the weak topology on measures (i.e.\ tested against $C_b(\R)$) as $\e \to 0$, we obtain
\[
  \lim_{\e \to 0} \int_{-1}^1 \varphi_\e''(z) V_{\alpha_\e}(z) \, dz
  = \int_{-1}^1 \varphi''(z) \|V\|_{L^1(\R)} \, d \delta_0(z)
  = \|V\|_{L^1(\R)} \varphi''(0).
\]

It is left to show that the first integral in the right-hand side of \eqref{pf:yh} vanishes as $\e \to 0$. To show this, we follow a similar computation as for the case $m=3$. We split the integration domain $(-1,1)$ into $(-1,\psi(-\delta))$, $\psi(B_\delta) \setminus \psi (B_\e)$, $\psi (B_\e)$ and $(\psi(\delta), 1)$. Similarly to \eqref{pf:yj}, we estimate, using $\max_{x \geq 1} x^2 |V'(x)| < \infty$, 
\[
  \bigg| \int_{\psi(\delta)}^1 F_\e [\varphi_\e(z)] V_{\alpha_\e}''(z) \, dz \bigg| 
  \leq \frac\e2 \alpha_\e^2 \big| V'(\alpha_\e \psi(\delta)) \big|  \\
  \leq C_\delta \e,
\] 
which shows that the contribution of $(\psi(\delta), 1)$ (and, similarly, that of $(-1,\psi(-\delta))$) to \eqref{pf:zw} vanishes in the limit $\e \to 0$. For the contribution of $\psi (B_\e)$, we again have that the integral over $(\psi(-\e), -\psi(-\e))$ vanishes. We estimate the remaining part (using $V'' \geq 0$) as
\begin{align*} 
   \bigg| \int_{-\psi(-\e)}^{\psi(\e)} F_\e [\varphi_\e (z)] V_{\alpha_\e}'' (z) \, dz \bigg|
   &\leq \frac\e2 \int_{-\psi(-\e)}^{\psi(\e)} V_{\alpha_\e}'' (z) \, dz.
 \end{align*} 
The right-hand side is equal to that of \eqref{pf:yo}. To bound it, we simplify the computation leading to \eqref{pf:yi} as
\begin{equation*} 
\begin{aligned}
  V'(\alpha_\e \psi(\e))
  &= V' (\e \alpha_\e p + \alpha_\e R_2(\e))
  = V' (\e \alpha_\e p) + \alpha_\e R_2(\e) V'' (\e \alpha_\e p + \theta_\e \alpha_\e R_2(\e)), \\
  V'(\alpha_\e \psi(-\e))
  &= -V' (\e \alpha_\e p) + \alpha_\e R_2(-\e) V'' (\e \alpha_\e p - \theta_\e \alpha_\e R_2(-\e))
\end{aligned}
\end{equation*}
for some $\theta_\e \in [0,1]$. Then, by \eqref{pf:yo} with $\e$ small enough we get, setting $\beta_\e := \e \alpha_\e$ and noting that $\beta_\e \to 0$ as $\e \to 0$,
\begin{equation*} 
  \bigg| \int_{-\psi(-\e)}^{\psi(\e)} F_\e [\varphi_\e (z)] V_{\alpha_\e}'' (z) \, dz \bigg|
  \leq \frac{\e}2 \Big( C \alpha_\e^3 \e^2 \V_2 \Big( \frac{ \e \alpha_\e p }2 \Big) \Big)
  \leq C' \beta_\e^3 \V_2 ( c \beta_\e^2 ),
\end{equation*}
which vanishes as $\e \to 0$.

Finally we treat the contribution of the remaining part $\psi(B_\delta) \setminus \psi(B_\e)$. As in \eqref{hiero}, we change the integration variable:
\begin{align} \label{pf:yg}
   \int_{\psi(B_\delta) \setminus \psi(B_\e)} F_\e [\varphi_\e (z)] V_{\alpha_\e}'' (z) \, dz
  = \int_{B_\delta \setminus B_\e} F_\e [y] V_{\alpha_\e}'' ( \psi (y) ) \psi' (y) \, dy.
\end{align}
Here, it is sufficient to show that the leading-order term with respect to $y$ of the integrand is an odd function and to show that the remaining terms yield a vanishing contribution to the integral. With this aim, we use $V \in C^3((0,\infty))$ to expand
\begin{align*}
  V_{\alpha_\e}'' ( \psi (y) ) \psi' (y)
  = p V_{\alpha_\e}'' ( p y) 
    + R_1(y) V_{\alpha_\e}'' ( p y ) 
    + R_2(y) (p + R_1(y)) V_{\alpha_\e}''' ( p y + \theta_y R_2(y) )
\end{align*}
for some $\theta_y \in [0,1]$. Since the leading-order term $p V_{\alpha_\e}'' ( p y)$ is even in $y$ and $F_\e$ is odd, their contribution to \eqref{pf:yg} vanishes, and thus
\begin{align} \notag
   \int_{\psi(B_\delta) \setminus \psi(B_\e)} F_\e [\varphi_\e (z)] V_{\alpha_\e}'' (z) \, dz
  &= \int_{B_\delta \setminus B_\e} F_\e [y] R_1(y) V_{\alpha_\e}'' ( p y ) \, dy \\\label{pf:vi}
  &\quad + \int_{B_\delta \setminus B_\e} F_\e [y] R_2(y) (p + R_1(y)) V_{\alpha_\e}''' ( p y + \theta_y R_2(y) ) \, dy.
\end{align} 

Next we show that both integrals in the right-hand side of \eqref{pf:vi} vanish as $\e \to 0$. The following estimates are different from those in the case $m=3$. We rely on $|F_\e[y]| < \frac\e2$ and $|R_j (y)| \leq C_j |y|^j$ for $j = 1,2$, and recall $\beta_\e = \e \alpha_\e \to 0$. Then, we estimate the first integral as
\begin{align} \notag
  \bigg| \int_{B_\delta \setminus B_\e} F_\e [y] R_1(y) V_{\alpha_\e}'' ( p y ) \, dy \bigg|
  &\leq C \e \int_\e^\delta y V_{\alpha_\e}'' ( p y ) \, dy \\\notag
  &= \frac C{p^2} \e \alpha_\e \int_{p \e \alpha_\e}^{\delta p \alpha_\e} z V'' ( z ) \, dz \\ \label{pf:vh}
  &\leq \frac C{p^3} (p \beta_\e) \int_{p \beta_\e}^\infty z V'' ( z ) \, dz.
\end{align}
By Lemma \ref{l:UV}\ref{l:UV:intx} this vanishes as $\e \to 0$.  

For the second integral in the right-hand side of \eqref{pf:vi} we take $\delta$ small enough with respect to both $C_2$ (which is the constant from $R_2$) and $1/\varphi'(0) = \lim_{\e \to 0} p$ such that for the argument of $V_{\alpha_\e}'''$ we have  
\[
  p y + \theta_y R_2(y)
  \geq \frac{p y}2 
  \qquad \text{for all } y \in (0, \delta)
\]
for all $\e$ small enough. Then, we obtain
\begin{align*}
  \bigg| \int_{B_\delta \setminus B_\e} F_\e [y] R_2(y) (p + R_1(y)) V_{\alpha_\e}''' ( p y + \theta_y R_2(y) ) \, dy \bigg|
  &\leq C p \e \int_\e^\delta y^2 \alpha_\e^4 \V_3 \Big( \alpha_\e \frac{p y}2 \Big) \, dy \\
  &\leq \frac{16 C}{p^3} \frac{p \beta_\e}{2} \int_{p \beta_\e/2}^{\delta p \alpha_\e/2} z^2  \V_3(z) \, dz.
\end{align*} 
By Lemma \ref{l:UV}\ref{l:UV:intx} we obtain, similarly as in \eqref{pf:vh}, that the integral vanishes as $\beta_\e \to 0$. This completes the proof of \eqref{pf:zw}.  
\medskip

\noindent \textbf{Case $m=1$.} In the case $m=1$ we have that
$\alpha_\e \to \alpha > 0$ as $\e \to 0$.
We may assume $\varphi'(0) > 0$ and need to prove that 
 \begin{equation*} 
  \lim_{\e \to 0} \pv \int_{-1}^1 E_\e^* [\varphi_\e(z)] V_{\alpha_\e}''(z) dz
  = \pv \int_{-1}^1 \varphi(z) V_\alpha''(z) dz.
\end{equation*}
We take $\delta$, $\psi$ and $F_\e$ as in the case $m=2$. It is sufficient to show that 
\begin{align} \label{pf:ye}
  \lim_{\e \to 0} \pv \int_{-1}^1 F_\e [\varphi_\e(z)] V_{\alpha_\e}''(z) \, dz &= 0, \\ \label{pf:yd}
  \lim_{\e \to 0} \bigg( \pv \int_{-1}^1 \varphi_\e(z) V_{\alpha_\e}''(z) \, dz - \pv \int_{-1}^1 \varphi(z) V_{\alpha}''(z) \, dz \bigg) &= 0.
\end{align}

The proof of \eqref{pf:ye} is similar to that in the case $m=2$; we omit it. To prove \eqref{pf:yd}, we assume for convenience that $\gamma_\e := \alpha_\e / \alpha \geq 1$. A linear change of variables reveals that
\[
  \pv \int_{-1}^1 \varphi_\e(z) V_{\alpha_\e}''(z) \, dz
  = \pv \int_{-\gamma_\e}^{\gamma_\e} \gamma_\e^2 \varphi_\e \Big( \frac x{\gamma_\e} \Big) V_{\alpha}''(x) \, dx.
\]
Since $\gamma_\e \to 1$ as $\e \to 0$ and since the integrand in the right-hand side is bounded on $B_{\gamma_\e} \setminus B_1$ uniformly in $\e$, the contribution over $B_{\gamma_\e} \setminus B_1$ vanishes as $\e \to 0$. Hence, it is left to show that
\[
  \lim_{\e \to 0} \pv \int_{-1}^1 (\tilde \varphi_\e - \varphi )(z) V_{\alpha}''(z) \, dz = 0,
\]
where $\tilde \varphi_\e(z) := \gamma_\e^2 \varphi_\e (  z / {\gamma_\e} )$. Expanding around $z=0$, we obtain
\[
  (\tilde \varphi_\e - \varphi_\e)(z)
  = 0 + z (\tilde \varphi_\e - \varphi_\e)'(0) + \frac{z^2}2 (\tilde \varphi_\e - \varphi_\e)''(\theta_z z)
\]
for some $\theta_z \in [0,1]$. Hence, 
\begin{align*}
  \bigg| \pv \int_{-1}^1 (\tilde \varphi_\e - \varphi )(z) V_{\alpha}''(z) \, dz \bigg|
  &\leq \frac12 \| (\tilde \varphi_\e - \varphi)'' \|_{C([-1,1])} \bigg( \pv \int_{-1}^1 z^2 V_{\alpha}''(z) \, dz \bigg) \\
  &\leq C \| \tilde \varphi_\e - \varphi \|_{C^2([-1,1])},
\end{align*}
which vanishes as $\e \to 0$.
\end{proof}

\begin{lem}
\label{l:parabola}
Let $m \in \{1,2,3\}$ with corresponding $\alpha_\e, \alpha, \beta > 0$ as in \eqref{alphaem}. For any $K, L > 1$ there exists a constant $C > 0$ which only depends on $V,K,L$ such that  
\begin{align}
\label{l:parabola:eq0}
0 \leq \gamma^2 \left( \pv \int_{B_1} E_\e^* \big[ K \big( (z + \gamma)^4 - \gamma^4 \big) \big] V_{\alpha_\e}''(z) \, dz\right) 
&\leq C
\end{align}
for all $\e \in (0,1)$ and all $|\gamma| \leq L$. Moreover, \eqref{l:parabola:eq0} also holds with $E_\e^*$ replaced by $E_{\e,*}$.
\end{lem}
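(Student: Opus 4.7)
\textbf{Proof plan for Lemma \ref{l:parabola}.} By the evenness of $V_{\alpha_\e}''$ we may substitute $z \mapsto -z$ and reduce to $\gamma \geq 0$. The lower bound follows immediately from Lemma \ref{l:Hpe}\ref{l:Hpe:order}: the test function $\phi(y) := Ky^4$ is convex, so it sits above its tangent $\psi(y) := \phi(\gamma) + 4K\gamma^3(y-\gamma)$ at $y = \gamma$, with $\phi(\gamma) = \psi(\gamma)$ and $\phi'(\gamma) = \psi'(\gamma) = 4K\gamma^3 \neq 0$ when $\gamma > 0$. Hence
\[
  \pv \int_{B_1} E_\e^*\!\big[K((z+\gamma)^4 - \gamma^4)\big] V_{\alpha_\e}''(z)\,dz
  \ \geq\ \pv \int_{B_1} E_\e^*[4K\gamma^3 z]\, V_{\alpha_\e}''(z)\,dz,
\]
and the right-hand side vanishes because (as already observed for linear arguments of $E_\e^*$) the map $z \mapsto E_\e^*[cz]$ is odd a.e.\ for any $c \neq 0$, while $V_{\alpha_\e}''$ is even. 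The case $\gamma = 0$ is trivial because of the prefactor $\gamma^2$.

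For the upper bound, write $h(z) := K\big((z+\gamma)^4 - \gamma^4\big)$ and split into two regimes. In the \emph{small-$\gamma$ regime} $\gamma^4 \leq c_0 \e$, we show that, for $\rho_0 := \gamma/2$, one has $|h(z)| < \e/2$ and $\mathrm{sign}\,h(z) = \mathrm{sign}\,z$ on $B_{\rho_0}$, so that $E_\e^*[h(z)] = (\e/2)\,\mathrm{sign}(z)$ is odd and the pv over $B_{\rho_0}$ vanishes (the argument is the natural analogue of Lemma \ref{l:Hpe}\ref{l:Hpe:Bp} with explicit $\rho_0$). In the \emph{large-$\gamma$ regime} $\gamma^4 > c_0 \e$, the same strategy works with $\rho_0 := c\e/\gamma^3$: the bounds $|h(z)| \leq CK\gamma^3|z|$ and the sign-matching $\mathrm{sign}\,h(z) = \mathrm{sign}\,z$ hold throughout $B_{\rho_0}$ because $\rho_0 \leq \gamma/2$.

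In either regime, it remains to estimate
\[
  J := \int_{B_1 \setminus B_{\rho_0}} \bigl(E_\e^*[h(z)] - E_\e^*[4K\gamma^3 z]\bigr) V_{\alpha_\e}''(z)\,dz,
\]
since subtracting the odd quantity $E_\e^*[4K\gamma^3 z]$ adds zero. Decomposing $E_\e^*[y] = y + F_\e[y]$ with the $\e/2$-bounded sawtooth $F_\e$ and noting that the odd part of $h(z) - 4K\gamma^3 z$ integrates to zero against even $V_{\alpha_\e}''$ on the symmetric domain $B_1 \setminus B_{\rho_0}$, one gets
\[
  |J| \ \leq\ C\int_{B_1}\!\!\bigl(6K\gamma^2 z^2 + Kz^4\bigr) V_{\alpha_\e}''(z)\,dz
     \ +\ \e \int_{B_1 \setminus B_{\rho_0}}\! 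V_{\alpha_\e}''(z)\,dz.
\]
The first integral is $O(\gamma^2 + 1)$: rescaling $y = \alpha_\e z$, using Corollaries \ref{c:V:1}, \ref{c:V:2} and \ref{c:V:3} to bound $\int y^2 V''$ and $\alpha_\e^{-2}\int y^4 V''$ uniformly in the three regimes (in case $m=3$ the latter is controlled via $\sup_{y\geq c}y^3|V''(y)| < \infty$), yields the bound. Multiplying by $\gamma^2$ produces $O(\gamma^4 + \gamma^2) \leq C(L)$.

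\textbf{Main obstacle.} The delicate term is $\e \int_{B_1\setminus B_{\rho_0}}\! V_{\alpha_\e}''(z)\,dz = 2\e \alpha_\e^2 |V'(\alpha_\e\rho_0) - V'(\alpha_\e)|$. In the small-$\gamma$ regime $\rho_0 = \gamma/2$, so $\alpha_\e\rho_0 = \alpha_\e\gamma/2$ diverges as $\e \to 0$ for $m=2,3$; combined with $y^2|V'(y)| \leq C$ for $y$ large (from Corollaries \ref{c:V:2}\ref{c:V:2:xinf} and \ref{c:V:3}\ref{c:V:3:k34} at $k=1$), this gives $\e\alpha_\e^2|V'(\alpha_\e\rho_0)| \leq C\e/\rho_0^2 = 4C\e/\gamma^2$, whence $\gamma^2 \cdot (\cdots) \leq C\e$. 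In the large-$\gamma$ regime $\rho_0 = c\e/\gamma^3$, one finds $\alpha_\e\rho_0 = c\beta_\e/\gamma^3$, which is \emph{bounded} (both above, giving access to the decay estimate, and below by $c\beta/L^3$, avoiding the singularity of $V'$ at $0$); however, naively this still produces $\gamma^2 \cdot \beta_\e^2/\e$, which diverges and would be the main analytic difficulty. The resolution is to refine the sawtooth comparison by exploiting that $E_\e^*[h(z)] - E_\e^*[4K\gamma^3 z]$ is actually zero outside the narrow set $B := \{z : \lfloor h(z)/\e\rfloor \neq \lfloor 4K\gamma^3 z/\e\rfloor\}$, whose measure (weighted by $V_{\alpha_\e}''$) is controlled via $\mathbb 1_B \leq (h(z) - 4K\gamma^3 z)/\e + 1$ in combination with the monotonicity $h(z) \geq 4K\gamma^3 z$. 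Alternatively, and this is the route I would actually pursue in full detail, one performs the change of variable $y = h(z)$ on the subdomain where $h$ is invertible (namely $(-\gamma,1)$) and sums the contributions over intervals $[\e k, \e(k+1))$ following the template of \eqref{pf:uy}, estimating the tail by the rough bound $|E_\e^*[y]| \leq |y| + \e/2$; the boundedness then follows from the decay of $V_{\alpha_\e}''$ away from $0$ at the scale $\rho_0$, which is precisely calibrated to absorb the $\e$-loss.
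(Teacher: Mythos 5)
Your lower bound and your treatment of the ``small-$\gamma$'' regime $\gamma^4\lesssim\e$ are correct and essentially coincide with the paper's handling of its regions $\Omega_1,\Omega_2$: there the interval around $z=0$ on which $E_\e^*[h(z)]=\tfrac\e2\sign z$ has length comparable to $\gamma$, the principal value over it vanishes, and outside it the crude bound $E_\e^*[x]\le x+\tfrac\e2$ together with $\int_{B_1}z^2V_{\alpha_\e}''$ and $\e|V_{\alpha_\e}'(\gamma/2)|\le C\e/\gamma^2$ closes the estimate after multiplying by $\gamma^2$. The genuine gap is exactly where you flag it: the regime $\e\lesssim K\gamma^4$ with $\gamma$ small (the paper's $\Omega_3$). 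Your first proposed fix is circular: the bound $\mathds{1}_B\le (h(z)-4K\gamma^3z)/\e+1$ gives, after multiplying by $\e$ and integrating against $V_{\alpha_\e}''$, precisely the term $\e\int_{B_1\setminus B_{\rho_0}}V_{\alpha_\e}''$ you are trying to avoid; and the final sentence of your sketch (``the decay of $V_{\alpha_\e}''$ at the scale $\rho_0$ absorbs the $\e$-loss'') is false as stated, since with $\rho_0=c\e/\gamma^3$ that decay yields $\e|V_{\alpha_\e}'(\rho_0)|\sim\e/\rho_0^2\sim\gamma^6/\e$, which is the divergence you yourself computed.

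Your second proposed route --- change variables $y=h(z)$ and sum over the steps of $E_\e$ as in \eqref{pf:uy} --- is the paper's actual method, but the rough bound $|E_\e^*[y]|\le|y|+\tfrac\e2$ is \emph{not} what makes it work, and the three ingredients that do make it work are absent from your sketch. First, one must Taylor-expand $\psi=h^{-1}$ and $\psi'$ around $y=0$ with quantitative remainders $|R_j^\gamma(y)|\le C_j|y|^j/(K^2\gamma^7)$ (see \eqref{pf:xk}); the $\gamma^{-7}$ blow-up of these remainders is the reason the powers of $\gamma$ must be tracked carefully. Second, in the new variable the \emph{leading} term of the integrand is $E_\e[y]\,p_\gamma V_{\alpha_\e}''(p_\gamma y)$ with $p_\gamma=(4K\gamma^3)^{-1}\to\infty$, and its integral is killed only by the odd--even cancellation ($E_\e[y]$ odd, $V_{\alpha_\e}''(p_\gamma y)$ even), not by any size estimate; what remains are the remainder terms in \eqref{pf:zi}, which after rescaling contribute $O(\gamma^4)$ as in \eqref{pf:vg} and \eqref{pf:ve}. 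Third, the change of variables can only be performed up to an intermediate cutoff, and its choice $\delta=\max\{\e,K\gamma^6\}$ is calibrated so that both the outer estimate (which costs $\gamma^6\psi(\delta)^{-2}$) and the inner Taylor remainders stay bounded; this is not ``precisely calibrated'' by $\rho_0=c\e/\gamma^3$. Until these steps are carried out, the upper bound in \eqref{l:parabola:eq0} is not established in the region that is the whole point of the lemma.
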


\begin{proof} 
We start with two observations.
First, we write the argument of $E_\e^*$ as 
\begin{equation} \label{pf:xs}
  \varphi(z) 
  := K \big( (z + \gamma)^4 - \gamma^4 \big)
  = K \big( z^4 + 4 \gamma z^3 + 6 \gamma^2 z^2 + 4 \gamma^3 z \big).
\end{equation}
Note the similarity with the setting in the proof of Lemma \ref{l:conv:RHS}, where $\varphi_\e$ corresponds to the current $\varphi$ in \eqref{pf:xs}. Indeed, we will make use of several estimates in the proof of Lemma \ref{l:conv:RHS}. However, a crucial difference is that $\varphi'(0) = 4K\gamma^3$, which can be arbitrarily close to $0$, and thus we can only use 
\begin{equation} \label{pf:uw}
  \psi(y) := \varphi^{-1} (y)
  = \gamma \bigg( \sqrt[4]{1 + \frac y{K \gamma^4}} - 1 \bigg)
\end{equation}
in a $\gamma$-dependent neighborhood around $0$.

Second, we may assume that $\gamma > 0$ without loss of generality. Indeed, the case $\gamma < 0$ is readily transformed to the case $\gamma > 0$ by the change of variables  $z \mapsto -z$. In addition, it does not matter whether we use $E_\e = E_\e^*$ or $E_{\e,*}$ in \eqref{l:parabola:eq0}, because any level set of $\varphi$ consists of at most two points.

The lower bound in \eqref{l:parabola:eq0} is easy to see. Indeed, since $\varphi$ is convex, it is bounded from below by its tangent at $0$. Then, the lower bound follows by applying Lemma \ref{l:Hpe}\ref{l:Hpe:order} and \eqref{odd0}. 

In the remainder we prove the upper bound in \eqref{l:parabola:eq0}. 
As preparation, we observe from $x^2 V''(x) \in L^1(\R)$ that  
\begin{align} \label{pf:xo}
  \int_0^1 z^2 V_{\alpha_\e}''(z) \, dz 
  &= \int_0^{\alpha_\e} y^2 V''(y) \, dy
  \leq  C
\end{align}  
and from $V' \leq 0$ and {$\sup_{x > 0} x^2 |V'(x)| < \infty$} that
\begin{equation} \label{pf:wl}
  \int_\delta^1 V_{\alpha_\e}''(z) \, dz
  \leq | V_{\alpha_\e}'(\delta) |
  = \alpha_\e^2 | V'(\alpha_\e \delta) |
  \leq \frac C{\delta^{2}}
\end{equation}
for all $\delta, \e \in (0,1)$.

We prove \eqref{l:parabola:eq0} by separating four regions $\Omega_i$ in the parameter space $(\gamma, \e)$; see Figure \ref{fig:Oms}. To motivate the curve which separates $\Omega_1$ from $\Omega_2 \cup \Omega_3$, let $(a, b) \ni 0$ be the largest interval such that 
\begin{equation} \label{pf:xv}
\begin{aligned}
-\e &<  \varphi(z) < 0, & a &< z < 0,\\
0 &< \varphi(z) < \e, & 0 &< z < b.
\end{aligned}  
\end{equation}
Note that $\varphi(b) = \e$. Inverting this, we find
\begin{align} \label{pf:xq}
b 
= \psi(\e) = \gamma \bigg( \sqrt[4]{1 + \frac\e{K \gamma^4}} - 1 \bigg).
\end{align}
Note further that $\varphi(a)$ can be equal to either $0$ or $-\e$, depending on the parameters; see Figure \ref{fig:phi}. Since $\min_\R \varphi = -K\gamma^4$, we obtain that $\varphi(a) = 0$ if and only if
\begin{equation} \label{pf:xt} 
  \e > K \gamma^4.
\end{equation}
This motivates the separation of region $\Omega_1$.
\smallskip

\begin{figure}[h]
\centering
\begin{tikzpicture}[scale=3, >= latex]    
\def \gam {0.841}
        
\draw[->] (0,0) -- (0,1.2) node[left]{$\e$};
\draw[->] (0,0) -- (2.2,0) node[right] {$\gamma$};
\draw (0,1) node[left]{$1$} --++ (2,0) --++ (0,-1) node[below]{$L$};

\draw[domain=0:1, smooth, thick, blue] plot (\x,{\x^4});
\draw[blue] (1,1) node[above]{$\e = K \gamma^4$};
\draw (\gam,0) node[below]{$\gamma_0$} -- (\gam,.5);
\draw[dotted] (0,.3)  node[left]{$\e_0$} -- (\gam, 0.3);
\draw (\gam, 0.3) -- (2,.3);

\draw (.4,.5) node{$\Omega_1$};
\draw (\gam,0) node[anchor = south east]{$\Omega_3$};
\draw (1.5,.65) node{$\Omega_2$};
\draw (1.5,.15) node{$\Omega_4$};
\end{tikzpicture} \\
\caption{Division of the parameter space into $\Omega_i$ ($i = 1,\ldots,4$).}
\label{fig:Oms}
\end{figure}
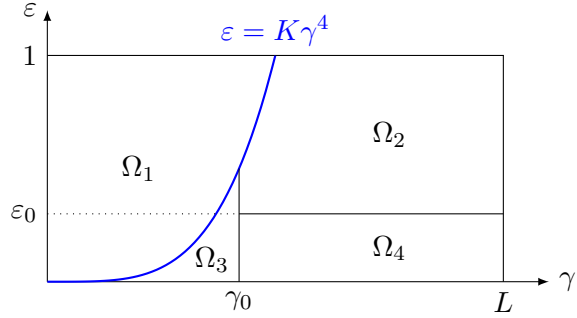 

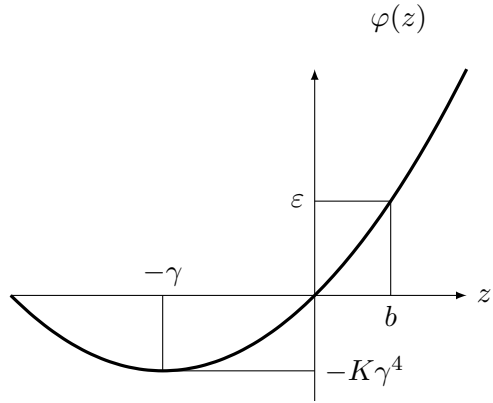
\begin{figure}[h]
\centering
\begin{tikzpicture}[scale=2, >= latex]    
\def \w {2}
        
\draw[->] (0,-.7) -- (0,1.5);
\draw[->] (-2,0) -- (1,0) node[right] {$z$};
\draw[domain=-2:1, smooth, very thick] plot (\x,{(.5 * \x * (\x+2))}); 
\draw (0.3, \w) node[anchor = north west]{$\varphi(z)$};
\draw (-1,0) node[above] {$-\gamma$} --++ (0,-.5) --++ (1,0) node[right] {$-K\gamma^4$};
\draw (.5,0) node[below] {$b$} --++ (0,.625) --++ (-.5,0) node[left] {$\e$};
\end{tikzpicture} \\
\caption{Sketch of $\varphi$.}
\label{fig:phi}
\end{figure} 

Next we prove \eqref{l:parabola:eq0} in the region $\Omega_1$, i.e.\ $\e, \gamma$ are such that \eqref{pf:xt} holds. Note that $\gamma \leq \sqrt[4]{\e / K} < 1$ and that
\[
  a = -2\gamma
  \qand
  b \geq \gamma (\sqrt[4]2 - 1) =: c_0 \gamma.
\]
Hence, $B_{c_0 \gamma} \subset (a,b)$. Then, considering \eqref{l:parabola:eq0}, we observe from \eqref{pf:xv} that
\begin{equation*} 
  \pv \int_{B_{c_0 \gamma}} E_\e[\varphi(z)] V_{\alpha_\e}''(z) \, dz = 0.  
\end{equation*}
It is therefore left to estimate the contribution of $B_1 \setminus B_{c_0 \gamma}$ to the integral in \eqref{l:parabola:eq0}. With this aim we use respectively $E_\e(x) \leq x + \frac\e2$, \eqref{odd0} and \eqref{pf:xo},\eqref{pf:wl} to obtain 
\begin{align} \notag
&\gamma^2 \int_{B_1 \setminus B_{c_0 \gamma}} E_\e[\varphi(z)] V_{\alpha_\e}''(z) \, dz \\\notag
&\leq \gamma^2 \int_{B_1 \setminus B_{c_0 \gamma}}\left(\varphi(z) + \frac\e 2\right) V_{\alpha_\e}''(z) \, dz \\ \notag
&= 2 \gamma^2 K \int_{c_0 \gamma}^1 z^4 V_{\alpha_\e}''(z) \, dz + 12 \gamma^4 K \int_{c_0 \gamma}^1 z^2 V_{\alpha_\e}''(z) \, dz + \gamma^2 \e \int_{c_0 \gamma}^1 V_{\alpha_\e}''(z) \, dz \\ \label{pf:zk}
&\leq C \big( \gamma^2 + \gamma^4 + c_0^{-2} \e   \big) 
\leq C'.
\end{align}
This proves \eqref{l:parabola:eq0} in the region $\Omega_1$.
\smallskip

Next we prove \eqref{l:parabola:eq0} in the region $(\e_0,1) \times (\gamma_0, L) \supset \Omega_2$, where $\gamma_0 \in (0,\frac12]$ is a universal constant which we choose later in \eqref{pf:wo} and 
\begin{equation} \label{pf:wp}
  \e_0 := K \gamma_0^4 \Big(1 - \Big(1- \frac1L \Big)^4 \Big)
\end{equation}
for a reason which becomes apparent later when treating $\Omega_4$. The proof of \eqref{l:parabola:eq0} above applies with minor modifications. From \eqref{pf:xq} we note that 
\[
  b 
  \geq \gamma_0 \bigg( \sqrt[4]{1 + \frac{\e_0}{K L^4}} - 1 \bigg) 
  =: b_0 \in (0, \gamma_0).
\] 
Since either $|a| > b$ or $|a| = 2 \gamma \geq \gamma_0$, we have $|a| \geq b_0$. Hence,
\[
  B_{b_0} \subset B_1 \cap (a,b),
\]
and thus the contribution of $B_{b_0}$ to the integral in \eqref{l:parabola:eq0} vanishes. We estimate the contribution of $B_1 \setminus B_{b_0}$ similarly as in \eqref{pf:zk}. This yields
\begin{align*} 
\gamma^2 \int_{B_1 \setminus B_{b_0}} E_\e[\varphi(z)] V_{\alpha_\e}''(z) \, dz 
\leq C \big( \gamma^2 + \gamma^4 + \gamma^2 \e b_0^{-2} \big)
\leq C'. 
\end{align*}
This proves \eqref{l:parabola:eq0} in region $\Omega_2$.
\medskip 

Next we prove \eqref{l:parabola:eq0} in the region $\Omega_3$, i.e.
\begin{equation} \label{pf:xp}
  \e \leq K \gamma^4 
  \qand 
  \gamma \in (0, \gamma_0).
\end{equation}
Note from \eqref{pf:xs} (see also Figure \ref{fig:phi}) that $\varphi(a) = -\e$, $b < |a| \leq \gamma$ and that $\varphi |_{(-\gamma, \infty)}$ is increasing, convex and invertible.
As in the proof of \eqref{pf:zu}, we divide the integration domain $B_1$ in \eqref{l:parabola:eq0} into (note from \eqref{pf:xt} that $(a,b) = \psi(B_\e)$)
\begin{equation} \label{pf:xr}
  (a,-b), \quad  
  (-b,b), \quad
  \psi(B_\delta) \setminus \psi(B_\e), \quad
  B_1 \setminus \psi(B_\delta),
\end{equation}
where 
\begin{equation*} 
  \delta := \max \{\e, K \gamma^6\}.
\end{equation*}
By construction, the four parts in \eqref{pf:xr} are disjoint. Moreover, all four parts are subsets of $B_1$. Indeed, since $\gamma \leq \gamma_0 \leq \frac12$ and $\e \leq K \gamma^4$, we have $\delta \leq K \gamma^4$, from which we obtain (recall  \eqref{pf:uw}) $\psi(-\delta) \geq \psi(-K \gamma^4) = -\gamma \geq -\frac12$. Since $\psi(\delta) < |\psi(-\delta)|$, we obtain $\psi(B_\delta) \subset B_1$, and thus all four parts in \eqref{pf:xr} are contained in $B_1$.

Next we estimate the integral in \eqref{l:parabola:eq0} separately over each of the four parts in \eqref{pf:xr}.
The part over $(-b,b)$ vanishes and the part over $(a,-b)$ is nonpositive. To estimate the part over $B_1 \setminus \psi(B_\delta)$, we observe (recall \eqref{pf:xp}) from
$$E_\e[\varphi(z)] 
\leq \varphi(z) + \frac\e2 
\leq K \Big( (z + \gamma)^4 - \frac12 \gamma^4 \Big)
\leq C (z^4 + \gamma^4 ),$$ 
$|\psi(-\delta)| > \psi(\delta)$ and \eqref{pf:xo},\eqref{pf:wl} that 
\begin{align} \notag  
\gamma^2 \int_{B_1 \setminus \psi(B_\delta)} E_\e[\varphi(z)] V_{\alpha_\e}''(z) \, dz 
&\leq 2 C \gamma^2  \bigg( \int_0^1 z^4 V_{\alpha_\e}''(z) \, dz  + \gamma^4 \int_{\psi(\delta)}^1 V_{\alpha_\e}''(z) \, dz \bigg) \\\label{pf:yl}
&\leq C' \gamma^2 \big( 1  + \gamma^4 \psi(\delta)^{-2} \big). 
\end{align} 
It is left to estimate $\psi(\delta)$ from below. Using that $\delta \geq K \gamma^6$ and $\gamma \leq \frac12$, we obtain (recall \eqref{pf:uw})
\begin{equation} \label{pf:vf}
  \psi(\delta)
  \geq \gamma \big( \sqrt[4]{1 + \gamma^2} - 1 \big)
  \geq \big( \sqrt[4]{2} - 1 \big) \gamma^3,
\end{equation}
where the second inequality follows from the following inequality:
\begin{equation} \label{pf:wk}
  \sqrt[4]{1 + x} - 1 \geq (\sqrt[4]{2} - 1) x
  \quad \text{for all } x \in [0,1].
\end{equation}
To see that \eqref{pf:wk} holds, note that $\sqrt[4]{1 + x} - 1$ is concave and that the line given by the graph of $(\sqrt[4]{2} - 1) x$ intersects the graph of $\sqrt[4]{1 + x} - 1$ at $x=0$ and at $x=1$. From \eqref{pf:vf} we observe that the right-hand side in \eqref{pf:yl} is bounded uniformly in $\gamma$ and $\delta$, and therefore the contribution of $B_1 \setminus \psi(B_\delta)$ to the integral satisfies \eqref{l:parabola:eq0}.

Finally, we estimate the part over $\psi(B_\delta) \setminus \psi(B_\e)$. If $\e \geq K \gamma^6$, then $B_\delta = B_\e$ and the contribution to the integral in \eqref{l:parabola:eq0} vanishes. Hence, we may assume that
\[
  \e < K \gamma^6 = \delta.
\]
Since $\gamma \leq \gamma_0 \leq \frac12$, we have that $B_\delta$ remains away from the endpoint $-K\gamma^4$ of the domain of $\psi$. 

We follow a similar estimate as for \eqref{pf:yg} with modification. As preparation for this, we compute
\begin{align*} 
  \psi'(y) &= \frac1{4K \gamma^3} \Big( 1 + \frac y{K \gamma^4} \Big)^{-\tfrac34} \\
  \psi''(y) &= \frac{-3}{16 K^2 \gamma^7} \Big( 1 + \frac y{K \gamma^4} \Big)^{-\tfrac74}
\end{align*}
for all $|y| \leq \delta$ (recall $\delta \leq \frac14 K \gamma^4$), and Taylor expand around $y = 0$
\begin{align*} 
  \psi(y)
  &= \frac y{4K \gamma^3} + R_2^\gamma(y) \\
  \psi'(y)
  &= \frac1{4K \gamma^3} + R_1^\gamma(y),
\end{align*}
where the remainder terms $R_j^\gamma(y)$ satisfy  
\begin{equation*} 
  |R_j^\gamma(y)| 
  \leq \frac{ C_j |y|^j }{ K^2 \gamma^7 }
  \quad \text{for } j=1,2 \text{ and all } |y| \leq \frac14 K \gamma^4
\end{equation*}
for some universal constants $C_j > 1$.
Then, similar to \eqref{pf:vi} with 
$$
  p_\gamma := \frac1{4 K \gamma^3}, 
$$  
we obtain
\begin{multline} \label{pf:zi}
\gamma^2 \int_{\psi(B_\delta) \setminus \psi(B_\e)} E_\e[\varphi(z)] V_{\alpha_\e}''(z) \, dz 
= \int_{B_{\delta} \setminus B_\e} \gamma^2 E_\e [y] V_{\alpha_\e}'' ( p_\gamma y ) R_1^\gamma(y) \, dy \\
 + \int_{B_{\delta} \setminus B_\e} \gamma^2 E_\e [y] R_2^\gamma(y) V_{\alpha_\e}''' \big( p_\gamma y + \theta_y R_2^\gamma(y) \big) \big( p_\gamma + R_1^\gamma(y) \big) \, dy
\end{multline}
for some $\theta_y \in [0,1]$.

Next we estimate both integrals in the right-hand side of \eqref{pf:zi} from above. With respect to the estimate on \eqref{pf:yg} we apply two modifications. First, $E_\e [y]$ appears in the integrand instead of $F_\e [y]$, and we estimate it as $|E_\e [y]| \leq |y| + \e \leq 2|y|$. Second, we have to keep track of the dependence on $\gamma$. We estimate the first integral as
\begin{align*} 
  \int_{B_{\delta} \setminus B_\e} \gamma^2 E_\e [y] V_{\alpha_\e}'' ( p_\gamma y ) R_1^\gamma(y) \, dy
  &\leq \frac{4 C_1}{K^2 \gamma^5} \int_\e^\delta y^2 V_{\alpha_\e}'' ( p_\gamma y ) \, dy \\
  &= \frac{64 C_1 \gamma}{p_\gamma} \int_{\alpha_\e p_\gamma \e}^{\alpha_\e p_\gamma \delta} z^2 V'' ( z ) \, dz \\ 
  &\leq C \gamma^4 \int_0^\infty z^2 V'' ( z ) \, dz 
  \leq C' \gamma^4.
\end{align*}
For the second integral, we first note that for any
\begin{equation} \label{pf:wo}
  \gamma \leq \gamma_0 := \frac12 \wedge \frac1{\sqrt{8 C_2}}
\end{equation}
and any $0 < y \leq \delta = K \gamma^6$ we have
\begin{subequations} \label{pf:xj}
\begin{align} \label{pf:xja}
  \big| p_\gamma + R_1^\gamma(y) \big|
  &\leq \frac1{4K \gamma^3} \Big( 1 + \frac{C_1 y}{K \gamma^4} \Big)
  \leq C p_\gamma,
  \\ \label{pf:xjb}
  p_\gamma y + \theta_y R_2^\gamma(y) 
  &\geq \frac{y}{K \gamma^3} \Big( \frac14 -  \frac{C_2 y}{K \gamma^4} \Big)
  \geq p_\gamma y ( 1 -  4 C_2 \gamma^2 )
  \geq \frac{p_\gamma y}2.
\end{align}
\end{subequations}
Then, the second integral is bounded from above by
\begin{align*} 
  &\int_{B_{\delta} \setminus B_\e} \gamma^2 E_\e [y] R_2^\gamma(y) V_{\alpha_\e}''' \big( p_\gamma y + \theta_y R_2^\gamma(y) \big) \big( p_\gamma + R_1^\gamma(y) \big) \, dy \\
  &\leq C \int_\e^\delta \frac{ y^3 }{K^2 \gamma^5}  \alpha_\e^4 \V_3 \Big( \alpha_\e \frac{p_\gamma y}2 \Big) p_\gamma \, dy \\
  &= 2^{10} C K \gamma^4 \int_{ \frac12 \alpha_\e p_\gamma \e}^{\frac12 \alpha_\e p_\gamma \delta} z^3 \V_3 (z) \, dz \\
  &\leq C' \int_0^\infty z^3 \V_3 (z) \, dz
  \leq C''.
\end{align*}
This completes the proof of \eqref{l:parabola:eq0} in the region $\Omega_3$. 
\smallskip

Finally, we prove \eqref{l:parabola:eq0} in the region $\Omega_4 = (0, \e_0) \times (\gamma_0, L)$. The proof in $\Omega_3$ applies with the only modification (and two ramifications thereof) that we choose a different $\delta$. In what follows, we only focus on these ramifications.

Part of the need for a different $\delta$ is to guarantee that the four parts in \eqref{pf:xr} are contained in $B_1$. We have $\psi(B_\e) \subset B_1$ by the choice of $\e_0$ in \eqref{pf:wp}, since
\begin{align*}
  \psi(\e)
  \leq |\psi(-\e)|
  = \gamma \bigg( 1 - \sqrt[4]{1 - \frac{\e}{K \gamma^4}} \bigg)
  \leq L \bigg( 1 - \sqrt[4]{1 - \frac{\e_0}{K \gamma_0^4}} \bigg) = 1.
\end{align*}
Similarly, if $\delta \in [\e, \e_0 \wedge K \gamma^4]$, then $\varphi^{-1}$ is defined on $B_\delta$, $\psi(B_\delta) \subset B_1$ and thus the four parts in \eqref{pf:xr} are contained in $B_1$. For reasons that become apparent later, we take
\[
  \delta := \e \vee \frac{K \gamma^4}{8 C_2} \wedge \e_0.
\]

The first ramification is that \eqref{pf:vf} changes into
\[
  \psi(\delta) 
  = \gamma \bigg( \sqrt[4]{1 + \frac{\delta}{K \gamma^4}} - 1 \bigg)
  \geq \gamma_0 \bigg( \sqrt[4]{1 + \min \Big\{ \frac{\e_0}{KL^4}, \frac1{8 C_2}  \Big\}} - 1 \bigg)
  \geq c.
\]
With this \eqref{pf:yl} can again be bounded from above uniformly in $\gamma$ and $\delta$.
The second ramification concerns the integral over $\psi(B_\delta) \setminus \psi(B_\e)$. If $\delta = \e$, then the integration domain is empty, and thus we may assume 
\begin{align*}
  \e < \delta \leq \frac{K \gamma^4}{8 C_2}.
\end{align*}
Then, the estimates in \eqref{pf:xj} change. However, it still holds that the left-hand sides are bounded by the right-hand sides (in \eqref{pf:xjb} we rely on $y \leq \delta \leq K \gamma^4 / (8 C_2)$), possibly for a different constant $C$ in \eqref{pf:xja}.
This completes the proof of \eqref{l:parabola:eq0}.
\end{proof}

\newpage

\section{Convergence of \eqref{Pn} to ($P^m$)}
\label{s:t}  

In this section we state and prove Theorem \ref{t}, which is the second of the two main results in this paper. It specifies the sense in which solutions of \eqref{Pn} converge to a solution of ($P^m$) (see Section \ref{s:intro:aim} for the definitions of \eqref{Pn} and ($P^m$)). We do this in terms of the viscosity solution framework introduced in Section \ref{s:HJ}. Whereas the PDEs ($P^m$) are only stated formally and the only rigorous meaning we have given to them are the viscosity solutions to \eqref{HJk} (see Definitions \ref{d:HJ1:VS} and \ref{d:HJk:VS}), the ODE system \eqref{Pn} and the corresponding Hamilton-Jacobi equation \eqref{HJe} (see Definition \ref{d:HJe:VS}) are both defined. While in the proof of Theorem \ref{t} it suffices to show that for any solution $(\bx,\bb)$ of \eqref{Pn} we can build a corresponding solution $v$ of \eqref{HJe}, it also holds that we can characterize any solution $(\bx,\bb)$ of \eqref{Pn} as the unique solution of \eqref{HJe} for a carefully selected initial condition. We state and prove this in Proposition \ref{p:Pn:to:HJe:un}.

We start by stating Theorem \ref{t}. Given $(\bx, \bb) \in \cZ_n$ (recall \eqref{Zn}), we recall from \eqref{un} the related piecewise constant function given by 
\begin{equation} \label{un:repd} 
  u_n(x) = \frac1n \sum_{i=1}^n b_i H(x - x_i).
\end{equation}
 
\begin{thm}[Convergence of \eqref{Pn} to \eqref{HJk}] \label{t}
Let $m \in \{1,2,3\}$ with corresponding $\alpha_n$, $ \alpha$, $\beta > 0$ as in \eqref{alphanm}. Let $U$ and $V$ satisfy Assumptions \ref{a:UV:fg} and \ref{a:UV}.
For each $n \geq 2$, let $(\bx^\circ, \bb^\circ)_n \in \cZ_n$ be such that the related sequence $u_n^\circ$ of functions defined by \eqref{un:repd} satisfies $u_n^\circ \to u^\circ$ uniformly on $\R$ as $n \to \infty$ for some $u^\circ \in BUC(\R)$. Then, the solution $(\bx, \bb)_n$ to $(P_n)$ (see Definition \ref{d:Pn}) with initial datum $(\bx^\circ, \bb^\circ)_n$, interpreted as $u_n$, converges locally uniformly as $n \to \infty$ to the viscosity solution $u$ of \eqref{HJk} (see Definitions \ref{d:HJ1:VS} and \ref{d:HJk:VS}) with initial datum $u^\circ$. Furthermore, $u \in C_b(\o Q)$ with $\|u\|_\infty = \|u^\circ\|_\infty$.
\end{thm}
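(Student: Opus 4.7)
The plan is to follow the standard half-relaxed limits program for viscosity solutions, with the particle-to-HJ translation as the only non-standard ingredient. Given the initial data $(\bx^\circ,\bb^\circ)_n$, Theorem \ref{t:Pn} produces a global solution $(\bx,\bb)_n$ of $(P_n)$ up to any fixed time horizon. From this solution we build $u_n$ by \eqref{un:repd} and then choose any $v_n\in C^1(\o Q)$ with $u_n^*\le v_n\le u_{n,*}+1/n$. As indicated in Section \ref{s:HJ:int} (cf.\ the cited Lemma 4.2 of \cite{VanMeursPeletierPozar22}, whose argument is invoked here as Proposition~\ref{p:Pn:to:HJe:un}), the resulting $v_n$ is a viscosity solution of \eqref{HJe} with $\e=1/n$. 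Since $\|u_n\|_\infty\le 1$, the family $\{v_n\}$ is uniformly bounded, so the half-relaxed limits $\o u:=\operatorname{lim\,sup}^* v_n$ and $\underline u:=\liminf_* v_n$ are well defined upper/lower semicontinuous bounded functions on $\o Q$ with $\underline u\le \o u$.

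The next step is to invoke Theorem \ref{t:conv}: under the assumptions on $V$ corresponding to the value of $m$, $\o u$ is a subsolution and $\underline u$ is a supersolution of \eqref{HJk}. To close the argument by the comparison principle (Theorem \ref{t:CP:HJ1} for $m=1$, Theorem \ref{t:CP:HJk} for $m=2,3$; for the latter the local Lipschitz continuity and sign of $f_m$ needed in Theorem \ref{t:CP:HJk} are given by Lemma \ref{l:f3} for $m=3$ and are immediate for $m=2$), I need the initial traces to satisfy $\o u(0,\cdot)\le u^\circ\le\underline u(0,\cdot)$. By construction $\|v_n(0,\cdot)-u_n^\circ\|_\infty\le 1/n$ and $u_n^\circ\to u^\circ$ uniformly, which yields $\o u(0,\cdot)\le u^\circ\le \underline u(0,\cdot)$ provided one can rule out an instantaneous boundary layer in time. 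This is where I expect the main obstacle, and the plan is to construct space–time barriers of the form $u^\circ(\o x)\pm \omega(x-\o x)\pm Kt$, where $\omega$ is the modulus of continuity of $u^\circ$, as sub/supersolutions of \eqref{HJe} uniform in $n$ (in cases $m=2,3$ one adds a small perturbation to guarantee $v_x\ne 0$, and uses the quantitative upper bound provided by Lemma \ref{l:parabola} to control the Hamiltonian on barriers of the form $\theta|x-\o x|^4+g(t)$). Comparison for \eqref{HJe} (Theorem \ref{t:CPe}) together with the initial inequality then propagates the bound to small $t>0$, giving the matching traces.

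With the traces in hand, the comparison principle yields $\o u\le \underline u$ on $\o Q$, and combined with $\underline u\le \o u$ we conclude $\o u=\underline u=:u$ on $\o Q$. In particular $u$ is continuous, is the (unique) viscosity solution of \eqref{HJk} with initial datum $u^\circ$, and the half-relaxed convergence upgrades to local uniform convergence $v_n\to u$; since $|v_n-u_n|\le 1/n$ everywhere, also $u_n\to u$ locally uniformly. The bounds $\|u\|_\infty\le \|u^\circ\|_\infty$ and the reverse inequality $\|u\|_\infty\ge \|u^\circ\|_\infty$ follow from $\|v_n\|_\infty\le \|u_n^\circ\|_\infty+1/n$ (a consequence of comparison with the constant sub/supersolutions $\pm\|u_n^\circ\|_\infty$) and from $u(0,\cdot)=u^\circ$, respectively. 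The main technical point, as noted above, is the construction of the uniform-in-$n$ initial barriers; everything else is a direct application of Theorem \ref{t:conv}, Theorem \ref{t:CPe} and the comparison principles of Section \ref{s:HJ}.
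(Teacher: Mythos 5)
Your overall strategy coincides with the paper's: half-relaxed limits of the (integrated) particle solutions, Theorem \ref{t:conv} to identify $\o u$ and $\underline u$ as sub-/supersolutions of \eqref{HJk}, comparison for \eqref{HJe} to pin down the initial traces, and comparison for \eqref{HJk} to conclude. You also correctly isolate the one genuinely non-standard step, namely the construction of $\e$-uniform barriers at $t=0$. (Two small misattributions: the fact that $u_n$ solves \eqref{HJe} is Lemma \ref{l:Pn:to:HJe}, not Proposition \ref{p:Pn:to:HJe:un}, and it is the piecewise constant $u_n$ itself, not an arbitrary $C^1$ interpolant $v_n$, that is the viscosity solution; this does not affect the structure since $|v_n-u_n|\le \frac1n$.)

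The gap is in the barrier you propose. A barrier of the form $u^\circ(\o x)+\omega(x-\o x)+Kt$, with $\omega$ the modulus of continuity of $u^\circ$, does \emph{not} work here: the profile $x\mapsto \omega(|x-\o x|)$ has a convex corner at its minimum $\o x$, and such a function does not have fourth-order wells in the sense of Definition \ref{def:4th:well} (the paper notes that $-|x|$ has fourth-order wells, but $+|x|$ does not). At the corner there exist admissible $C^2$ test functions $\psi$ touching from below with $\psi_x(\o t,\o x)\neq 0$ arbitrarily small, and the only tool for bounding $\underline M_{\rho,\e}[\psi,\cdot]$ uniformly over all such $\psi$ is Lemma \ref{l:Hpe}\ref{l:Hpe:order} combined with an explicit quartic upper envelope touching the barrier with matching slope — which is exactly the fourth-order-wells property, and exactly what Lemma \ref{l:parabola} is designed to exploit. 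Without it, $|\psi_x|\cdot \underline M_{\rho,\e}[\psi,\cdot]$ cannot be bounded uniformly in $\e$ (for singular $V$ the relevant integral of $\omega(|z|)V_{\alpha_\e}''(z)$ diverges, and the prefactor $|\psi_x|$ does not compensate). The paper's resolution is to first replace $u^\circ$ by a globally $C^1$ approximation $u^\circ_\delta \geq u^\circ+\delta$ obtained by taking the infimum over all shifted quartics $K_\delta(y-y_0)^4+\phi_0$ lying above a mollification of $u^\circ$; this $u^\circ_\delta$ has fourth-order wells by construction, Lemma \ref{l:barrier} then produces the supersolution $u^\circ_\delta(x)+\sigma_\delta t$ with $\sigma_\delta$ independent of $\e$, and letting $\delta\to 0$ recovers the trace. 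You should replace your modulus-of-continuity barrier by this quartic inf-convolution (or prove separately that your barrier has fourth-order wells, which it does not in general); the rest of your argument then goes through as in the paper.
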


In Section \ref{s:disc:weak:V} we discuss how the assumptions on $V$ can be weakened for different choices of $m$.

Next we sketch the proof of Theorem \ref{t}. It follows the usual approach. We set
\begin{equation} \label{pf:t}
  \underline u := {\liminf}_* u_n \leq {\limsup}^* u_n =: \o u
  \quad \text{on } Q,
\end{equation}
and use the convergence result in Theorem \ref{t:conv} to show that $\underline u$, $\o u$ are respectively super- and subsolutions of \eqref{HJk}. For the opposite inequality, i.e.\ $\underline u \geq \o u$, we rely on the comparison principles of \eqref{HJe} ($\e := \frac1n$) and \eqref{HJk} (Theorems \ref{t:CPe}, \ref{t:CP:HJ1}, \ref{t:CP:HJk}). For this approach to work, we require two new statements:
\begin{itemize}
  \item $u_n$ is a solution of \eqref{HJe}. We prove this in Lemma \ref{l:Pn:to:HJe}. 
  \item there exist sub- and supersolutions (barriers) of \eqref{HJe} which are at initial time close to $u^\circ$ and independent of $\e$. We construct these barriers in Lemma \ref{l:barrier}. The construction relies on Definition \ref{def:4th:well}. 
\end{itemize}

\begin{defn}[Fourth-order wells] \label{def:4th:well}
We say that a function $\phi \in C(\R)$ has fourth-order wells with respect to a constant $K > 0$ if for all $\o x \in \R$ there exists $x_0 \in \R$ such that
\begin{equation} \label{4th:well}
  \phi(\o x + x) - \phi(\o x) \leq K [ (x - x_0)^4 - x_0^4 ]
  \qquad \text{for all } x \in \R.
\end{equation}
\end{defn} 

One can interpret the condition \eqref{4th:well} as follows: the graph of $\phi$ can be touched at any point $\o x$ from above by a horizontally and vertically shifted version of $x \mapsto Kx^4$ for some $K > 0$ large enough. From this viewpoint it is easy to see that if $\phi$ satisfies \eqref{4th:well} and $\phi$ is differentiable at $\o x$, then the derivatives of both sides in \eqref{4th:well} have to be equal at $x = 0$. This fixes $x_0$ by
\begin{equation} \label{x03}
  x_0^3 = \frac{- \phi' (\o x)}{4K}.
\end{equation}
We further note that if $\phi \in C(\R)$ has fourth-order wells, then $\phi$ is locally Lipschitz continuous and $\phi(x) - \lambda x^2$ is locally concave for $\lambda$ large enough. Finally, the class of functions with fourth-order wells does not contain or is contained in $C^k(\R)$ for any $k \geq 1$. Indeed, $x \mapsto - |x|$ has fourth-order wells, but the negative of the usual mollifier in $C_c^\infty(\R)$ does not satisfy \eqref{4th:well} at $\o x = 0$.

Definition \ref{def:4th:well} does not appear in \cite{VanMeursPeletierPozar22}. In \cite{VanMeursPeletierPozar22} it is enough to work with $K$-semiconcave functions (which would correspond to functions of \textit{second}-order wells). However, this notion only works when the singularity of $V$ is at most logarithmic. 

\begin{proof}[Proof of Theorem \ref{t}]
Consider \eqref{pf:t}. Lemma \ref{l:Pn:to:HJe} below shows that $u_n$ is a viscosity solution of \eqref{HJe} with $\e = \frac1n$. Since $\| u_n \|_\infty \leq 1$, we obtain from Theorem \ref{t:conv} that  $\underline u$, $\o u$ are respectively super- and subsolutions of \eqref{HJk}. Note from $\| u_n \|_\infty \leq 1$ that
\begin{equation} \label{pf:vc}
  \| \underline u \|_\infty \leq 1
  \qand
  \| \o u \|_\infty \leq 1.
\end{equation}

Next we use the comparison principles of \eqref{HJe} and \eqref{HJk} (Theorems \ref{t:CPe}, \ref{t:CP:HJ1} and \ref{t:CP:HJk}) to prove the opposite inequality $\o u \leq \underline u$ on $Q$. With this aim, we first construct a regular upper bound $u_\delta^\circ$ of $u^\circ$.
Since $u^\circ \in BUC(\R)$, there exists a sequence $(u^\circ_\delta)_\delta \subset C^1(\R)$ such that for all $\delta$ small enough
\begin{itemize}
  \item $\| u^\circ_\delta \|_{C^1(\R)} < \infty$,
  \item $u^\circ_\delta \geq u^\circ + \delta$ on $\R$, 
  \item $(u^\circ_\delta)' = 0$ on $B_{1/\delta}^c$,
  \item $u^\circ_\delta$ has fourth-order wells with respect to some $K_\delta > 0$ (see Definition \ref{def:4th:well}),
  \item $u^\circ_\delta(x) \to u^\circ(x)$ as $\delta \to 0$ for all $x \in \R$.
\end{itemize}
A possible construction of $u^\circ_\delta$ is as follows. Take 
\[
  \hat u_\delta^\circ(x) := \left\{ \begin{aligned}
    &\sup_{(-\infty, -\delta^{-1}+1)} u^\circ
    &&\text{if } x < -\delta^{-1} + 1 \\   
    &u^\circ(x)
    &&\text{if } |x| \leq \delta^{-1} - 1  \\
    &\sup_{(\delta^{-1}-1, \infty)} u^\circ
    &&\text{if } x > \delta^{-1} - 1.
  \end{aligned} \right.
\]
Note that $\hat u_\delta^\circ \geq u^\circ$ and that $\hat u_\delta^\circ$ may be discontinuous at $x = \pm (\delta^{-1} -1)$. Then, mollify $\hat u_\delta^\circ(x) + 2 \delta$ with sufficiently small, $\delta$-dependent radius such that the mollified function, say $\tilde u_\delta^\circ$, satisfies the first three conditions. Obviously, $\tilde u_\delta^\circ$ also satisfies the fifth condition. Finally, take \comm{p.89 for details on props $u_\delta^\circ$}
\[
  u_\delta^\circ (x)
  := \inf \{ \phi (x) \mid \phi \geq \tilde u_\delta^\circ \text{ and } \phi(y) = K_\delta (y - y_0)^4 + \phi_0 \text{ for some } y_0, \phi_0 \in \R \}
  \geq \tilde u_\delta^\circ (x),
\]
which by construction has fourth-order wells with constant $K_\delta > 0$. Note that $u_\delta^\circ$ is in $C^1(\R)$ and that for $K_\delta$ large enough we have $u_\delta^\circ \leq \tilde u_\delta^\circ + \delta$. Hence, $u_\delta^\circ$ satisfies all required conditions. 

By Lemma~\ref{l:barrier} below, there exists $\sigma_\delta > 0$ independent of $\e$ such that $u_\delta (t,x) := u^\circ_\delta(x) + \sigma_\delta t$ is a supersolution for \eqref{HJe} on $Q_T$ for any $T > 0$. We set $T=1$ in the remainder. 

Next we show that the comparison principle for \eqref{HJe} (Theorem \ref{t:CPe}) applies on $Q_T$ with $u_n^*$ as the subsolution, $u_\delta$ as the supersolution, $n$ large enough with respect to $\delta$ and $\delta > 0$ arbitrary, i.e.\ we need to show \eqref{CPe:far-field} and that $u_n^*(0, \cdot) \leq u_\delta^\circ$. We start with the latter. Since $\bx$ is continuous (in particular at $t = 0$), we have that $u_n^*(0, \cdot) = u_n^{\circ, *}$. By construction (recall \eqref{un:repd}), $u_n^{\circ, *} \leq u_n^{\circ} + \frac1n$ and $u_\delta^\circ \geq u^\circ + \delta$. Then, from the uniform convergence of $u_n^{\circ}$ to $u^{\circ}$, we conclude that $u_n^*(0, \cdot) \leq u_\delta^\circ$ for $n$ large enough with respect to $\delta$. It is left to show \eqref{CPe:far-field} with $T=1$. Since $\bx \in C([0,1], \R^n)$, we have that $R := \max_{1 \leq i \leq n} \max_{0 \leq t \leq 1} |x_i(t)| < \infty$. Note that $u_n^*(t, x) = u_n^\circ(x)$ for all $t \in [0,1]$ and all $|x| > R$. Then, for any $t \in [0,1]$ and any $|x| > R$,
\[
  u_n^*(t, x) - u_\delta(t,x) 
  = u_n^\circ(x) - u_\delta^\circ(x) - \sigma_\delta t
  \leq u_n^\circ(x) - u^\circ(x) - \delta,
\]
which is negative for $n$ large enough with respect to $\delta$. This shows that \eqref{CPe:far-field} is satisfied. 

In conclusion, Theorem \ref{t:CPe} applies for all $\delta > 0$ small enough and all $n$ large enough with respect to $\delta$. Hence, $u_n^*(t,x) \leq u^\circ_\delta(x) + \sigma_\delta t$ for all $(t,x) \in Q_T$ and all $n$ large enough, and therefore $\o u(0, \cdot) \leq u^\circ_\delta$. Using a similar argument for approximation from below, in the limit $\delta \to 0$ we obtain
\begin{equation*} 
  u^\circ \leq \underline u (0, \cdot) \leq \o u (0, \cdot) \leq u^\circ.
\end{equation*}
Hence, $\underline u (0, \cdot) = \o u (0, \cdot) \in BUC(\R)$, and thus the comparison principles for \eqref{HJk} (Theorems \ref{t:CP:HJ1} and \ref{t:CP:HJk} (recall \eqref{pf:vc})) yield $\o u \leq \underline u$ on $Q$. Hence, the inequality in \eqref{pf:t} has to be an equality, and thus $u := \underline u \ (= \o u)$ is the viscosity solution of \eqref{HJk} and $u_n \to u$ locally uniformly as $n \to \infty$.

Finally, $\|u\|_\infty = \|u^\circ\|_\infty$ is obvious from the comparison principle because constants are solutions of \eqref{HJk}. 
\end{proof}

\begin{lem}
\label{l:Pn:to:HJe}
Consider the setting of Theorem \ref{t}. Then for any $n \geq 2$, $u_n$ is a viscosity solution to \eqref{HJe} with $\e = \frac1n$.
\end{lem}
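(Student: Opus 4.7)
The plan is to verify that $u_n^*$ is a subsolution and $u_{n,*}$ a supersolution of \eqref{HJe} with $\e = 1/n$, adapting the strategy of \cite[Lemma 4.2]{VanMeursPeletierPozar22} to the general potential $V_{\alpha_n}$ and external field $U$. The supersolution case is handled symmetrically (swap $(\bb, u_n) \leftrightarrow (-\bb, -u_n)$), so I focus on the subsolution claim. By Lemma \ref{l:HJe:rho} I may work with any fixed $\rho > 0$, chosen small enough so that the ball $B_\rho$ around the point of interest contains no other charged particle.

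Let $\phi \in C^2(Q)$ be such that $u_n^* - \phi$ attains a strict global maximum at $(\o t, \o x) \in Q$, normalized to value $0$. Three configurations arise. If $\o x \neq x_i(\o t)$ for every $i$ with $b_i(\o t) \neq 0$, then by Property \ref{t:Pn:LB:d} together with the continuity of the $x_i$'s and the right-continuity of the $b_i$'s, $u_n$ is locally constant in a space-time neighborhood of $(\o t, \o x)$, so $\phi_x(\o t, \o x) = 0 = \phi_t(\o t, \o x)$ and the subsolution inequality is immediate. If $\o t$ is a collision time and $\o x$ is the collision point, the right-continuity of $b_i$ combined with the alternating-sign structure of Corollary \ref{c:Pn} forces $u_n$ to be locally constant in a right half-neighborhood of $\o t$ at $\o x$, reducing to the previous case. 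The substantive case is the third: $\o x = x_i(\o t)$ with $b_i(\o t) \neq 0$ and $\o t$ not a collision time; by sign symmetry I may assume $b_i(\o t) = +1$.

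In this case $u_n^*(\o t, \o x)$ equals the right limit $u_n(\o t, \o x+)$, and the max condition $u_n^* \leq \phi$ translates to $\phi(t, x) \geq \phi(\o t, \o x)$ on the closed half-neighborhood $\{x \geq x_i(t)\}$ of $(\o t, \o x)$. This yields $\phi_x(\o t, \o x) \geq 0$ and the level-set identity
\begin{equation*}
  \phi_t(\o t, \o x) + \phi_x(\o t, \o x)\, \dot x_i(\o t) = 0,
\end{equation*}
since $t \mapsto \phi(t, x_i(t))$ attains a local minimum at $\o t$. When $\phi_x(\o t, \o x) = 0$, this identity forces $\phi_t(\o t, \o x) = 0$ and all branches of Definition \ref{d:HJe:VS} are verified. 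When $\phi_x(\o t, \o x) > 0$, substituting the ODE \eqref{Pn:ito:forces} (with $b_i = 1$) produces
\begin{equation*}
  \phi_t(\o t, \o x) = \phi_x(\o t, \o x) \bigg[\frac{1}{n} \sum_{j \neq i} b_j V'_{\alpha_n}(x_i - x_j) + U'(\o x)\bigg],
\end{equation*}
and it remains to bound the bracket by $\o M_{\rho, \e}[\phi, u_n](\o t, \o x) + U'(\o x)$.

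This last identification is the analytical core. Exploiting that $z \mapsto E_\e^*[u_n(\o t, \o x + z) - u_n(\o t, \o x)]$ is a bounded step function whose jumps of size $b_j \e$ occur at $z = x_j - x_i$, and that $V'_{\alpha_\e}(z) \to 0$ as $|z| \to \infty$ (Corollaries \ref{c:V:1}--\ref{c:V:3}), a direct integration by parts yields
\begin{equation*}
  \int_{B_\rho^c} E_\e^*[u_n(\o t, \o x + z) - u_n(\o t, \o x)]\, V''_{\alpha_\e}(z)\, dz = \frac{1}{n} \sum_{j \neq i} b_j V'_{\alpha_\e}(x_i - x_j)
\end{equation*}
once $\rho$ is smaller than the distance from $x_i$ to its nearest other charged particle. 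For the principal value integral over $B_\rho$, the relation $\phi \geq u_n$ near $(\o t, \o x)$ with equality at $(\o t, \o x)$, combined with the monotonicity of $E_\e^*$, the bound $V''_{\alpha_\e} \geq 0$, and Lemma \ref{l:Hpe}\ref{l:Hpe:order}, forces the $\phi$-version to dominate the $u_n$-version, and the latter vanishes by local oddness of the integrand around $z = 0$. The principal obstacle is setting up this pv comparison and the integration-by-parts carefully despite the possibly strong singularity of $V''_{\alpha_\e}$ at $0$; here one exploits that $E_\e^*$ applied to a function with a jump of size $\pm \e$ at $0$ produces an odd function in a neighborhood of $0$, rendering the pv integral convergent.
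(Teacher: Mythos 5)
Your non-collision analysis is essentially the computation the paper imports from \cite[Lemma 4.2]{VanMeursPeletierPozar22}: the level-set identity $\phi_t + \phi_x \dot x_i = 0$, the integration by parts identifying the $B_\rho^c$ integral with $\frac1n\sum_{j\neq i} b_j V_{\alpha_n}'(x_i-x_j)$, and the comparison of the principal-value integrals via Lemma \ref{l:Hpe}\ref{l:Hpe:order}. The genuine gap is your treatment of a collision point $(\o t,\o x)=(\tau,y)$. You claim that right-continuity of $\bb$ and Corollary \ref{c:Pn} make $u_n$ locally constant in a right half-neighborhood and that this ``reduces to the previous case.'' Neither half is correct. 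If an odd number of particles collide, a charged particle survives at $y$ and $u_n$ keeps a jump across its trajectory for $t>\tau$. More importantly, even when all colliders annihilate, the global maximum condition $u_n^*\le\phi$ constrains $\phi$ for $t<\tau$, where $u_n$ oscillates with amplitude $\frac1n$ over the shrinking spatial interval occupied by the colliding cluster; the conclusion $\phi_x(\o t,\o x)=\phi_t(\o t,\o x)=0$ of your first case is simply unavailable. The paper's Discussion even exhibits a $C^2$ test function $\phi(t,x)=C|x-\o x|^2+c(t-\o t)$ touching $u_n^*$ from above at a collision point with $\phi_t(\o t,\o x)>0$, so any argument that disposes of arbitrary $C^2$ test functions at collision points must be wrong --- this is precisely why Definition \ref{d:HJe:VS} restricts the test class there, a restriction your proof never uses.

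The missing step is the following. At a collision point one only needs to treat $\phi(t,x)=\theta|x-\o x|^4+g(t)$ (with $\theta\ge0$, by boundedness of $u_n$) and prove $g'(\tau)\le0$. For $t<\tau$ the value $u_n^*(\tau,y)$ is attained by $u_n(t,\cdot)$ on some plateau inside the colliding cluster, i.e.\ at a point $x^*(t)$ with $|x^*(t)-y|\le C(\tau-t)^{1/(2+a)}$ by Theorem \ref{t:Pn}\ref{t:Pn:C12}. Then
\begin{equation*}
  g(\tau)=u_n^*(\tau,y)=u_n(t,x^*(t))\le\phi(t,x^*(t))\le \theta C^4(\tau-t)^{4/(2+a)}+g(t),
\end{equation*}
and since the standing integrability assumptions on $V$ at the origin allow Assumption \ref{a:fg}\ref{a:fg:singUB} with some $a<2$, the first term is $o(\tau-t)$ and $g'(\tau)\le0$ follows; the supersolution half uses Theorem \ref{t:Pn}\ref{t:Pn:LB:col} analogously. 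This is exactly why the paper's proof states that the adaptation of \cite[Lemma 4.2]{VanMeursPeletierPozar22} requires Theorem \ref{t:Pn}\ref{t:Pn:C12},\ref{t:Pn:LB:col} --- ingredients your proposal never invokes.
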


We refer to \cite[Lemma 4.2]{VanMeursPeletierPozar22} for the proof of Lemma \ref{l:Pn:to:HJe}. It needs minor modifications and relies on Theorem \ref{t:Pn}\ref{t:Pn:C12},\ref{t:Pn:LB:col}.

\begin{lem}
\label{l:barrier}
Let $m \in \{1,2,3\}$ with corresponding $\alpha_\e, \alpha, \beta > 0$ as in \eqref{alphaem}. Let $v_0 \in C^1 (\R)$ be such that 
\begin{itemize}
  \item $v_0' = 0$ on $B_R^c$ for some $R > 0$, and
  \item $v_0$ has fourth-order wells with respect to some $K > 1$ (see Definition \ref{def:4th:well}). 
\end{itemize}
 Then there exists $\sigma > 0$ which may depend on $R, K, V, U, \|v_0\|_{C^1(\R)}$ and $\inf_{0 < \e <1 } \alpha_\e$ (and $\sup_{0 < \e <1 } \alpha_\e$ if $m=1$) such that $v(t,x) := v_0(x) + \sigma t$ is a supersolution of \eqref{HJe} for all $\e \in (0,1)$. 
\end{lem}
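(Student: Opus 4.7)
The plan is to verify the supersolution conditions of Definition \ref{d:HJe:VS} directly; by Lemma \ref{l:HJe:rho} I may fix $\rho = 1$. Let $\psi \in C^2(Q)$ be a test function for which $v - \psi$ attains its global minimum at some $(\bar t, \bar x) \in Q$. The special-form case, i.e.\ $\psi(t,x) = \theta|x - \bar x|^4 + g(t)$, is immediate: tangency in time at the interior minimum $\bar t > 0$ forces $\psi_t(\bar t, \bar x) = g'(\bar t) = \sigma$, so the required condition $\psi_t \geq 0$ reduces to $\sigma \geq 0$.

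The substantive case is $\psi_x(\bar t, \bar x) \neq 0$. Tangency gives $\psi_t(\bar t, \bar x) = \sigma$ and $\psi_x(\bar t, \bar x) = v_0'(\bar x) \neq 0$, so $\bar x \in B_R$ and $|U'(\bar x)| \leq \sup_{B_R}|U'|$ is finite. I would invoke the fourth-order wells property at $\bar x$ to produce $x_0 \in \R$ satisfying
\[
v_0(\bar x + z) - v_0(\bar x) \leq K\bigl[(z - x_0)^4 - x_0^4\bigr] \qquad \text{for all } z \in \R.
\]
Since $v_0$ is $C^1$, matching derivatives at $z = 0$ via \eqref{x03} gives $x_0^3 = -v_0'(\bar x)/(4K)$; in particular $x_0 \neq 0$, $|x_0| \leq L_0 := \bigl(\|v_0'\|_\infty/(4K)\bigr)^{1/3}$ and $|\psi_x(\bar t, \bar x)| = 4K|x_0|^3$.

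Combining tangency $\psi(\bar t, \bar x + z) - \psi(\bar t, \bar x) \leq v_0(\bar x + z) - v_0(\bar x)$ with the wells inequality, the quartic $\Phi(z) := K[(z - x_0)^4 - x_0^4]$ dominates $z \mapsto \psi(\bar t, \bar x + z) - \psi(\bar t, \bar x)$ pointwise, with matching value and derivative at $z = 0$. Lemma \ref{l:Hpe}(iii), whose proof applies equally when $E_\e^*$ is replaced by $E_{\e,*}$, therefore yields
\[
\pv\int_{B_1} E_{\e,*}\bigl[\psi(\bar t, \bar x + z) - \psi(\bar t, \bar x)\bigr] V_{\alpha_\e}''(z)\, dz
\leq \pv\int_{B_1} E_{\e,*}[\Phi(z)]\, V_{\alpha_\e}''(z)\, dz.
\]
Lemma \ref{l:parabola} with $\gamma = -x_0$ and $L = L_0$ bounds the right-hand side by $C/x_0^2$, with $C$ depending only on $V, K, L_0$ and (for $m=1$) on $\sup_{\e \in (0,1)} \alpha_\e$; multiplying by $|\psi_x(\bar t, \bar x)| = 4K|x_0|^3$ collapses this to $4KC|x_0| \leq 4KCL_0$. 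For the tail on $B_1^c$, Lemma \ref{l:Hpe}(ii) (applied with $v$ replaced by $v_0$, using that the Hamiltonian depends on $v$ only through the time-independent differences $v_0(\bar x + z) - v_0(\bar x)$) gives an upper bound $(4\|v_0\|_\infty + 1)\, \alpha_\e^2 |V'(\alpha_\e)|$, which is uniformly bounded in $\e \in (0,1)$: by boundedness of $\alpha_\e$ and continuity of $V'$ on $(0,\infty)$ for $m=1$, and by $x^2|V'(x)| \to 0$ at infinity for $m = 2,3$ (Corollaries \ref{c:V:2}, \ref{c:V:3}). Together with $|U'(\bar x)|\cdot|\psi_x| \leq \sup_{B_R}|U'|\cdot 4KL_0^3$, this produces an $\e$-uniform upper bound $\sigma_0$ on $\underline H_{1,\e}[\psi, v](\bar t, \bar x)$, and setting $\sigma := \sigma_0 \vee 0$ completes the argument.

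The main obstacle is the delicate cancellation in the substantive case: Lemma \ref{l:parabola} yields an $O(|x_0|^{-2})$ blow-up of the principal value as $|x_0| \to 0$ (i.e.\ as the fourth-order well degenerates), which is precisely absorbed by the $O(|x_0|^3)$ factor $|\psi_x|$ dictated by matching derivatives at the base of the well. This exact matching of exponents is the reason for using fourth-order (rather than second-order, as in \cite{VanMeursPeletierPozar22}) wells: for interaction potentials $V$ with singularities stronger than logarithmic, a second-order upper envelope would not suppress the Hamiltonian fast enough to compensate for its blow-up rate.
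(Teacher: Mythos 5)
Your proof is correct and follows essentially the same route as the paper's: fix $\rho=1$, use tangency at the global minimum to reduce to bounding $\underline H_{1,\e}[\psi,v](\o t,\o x)$ from above, dominate the local part by the quartic well via Lemma \ref{l:Hpe}\ref{l:Hpe:order} and Lemma \ref{l:parabola} (with exactly the $|x_0|^{3}\cdot|x_0|^{-2}$ cancellation you highlight), and control the tail term and the $U'$-term uniformly in $\e$ using the compact support of $v_0'$. The only cosmetic adjustment is to take $\sigma$ strictly positive, e.g.\ $\sigma := \sigma_0 \vee 1$, since the statement requires $\sigma>0$ and enlarging $\sigma$ preserves the supersolution property.
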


\begin{proof}  
Set $\alpha_* := \inf_{0 < \e <1 } \alpha_\e > 0$ and $\alpha^* := \sup_{0 < \e <1 } \alpha_\e$, which is finite if $m=1$. For a certain $\sigma > 0$ which we specify later, we check that $v$ satisfies Definition \ref{d:HJe:VS} with $\rho = 1$. Suppose $v - \psi$ has a global minimum at $(\o t, \o x) \in Q$. Since $v \in C^1(Q)$, we have
\begin{gather} \label{pf:sig:phit}
  0 
  < \sigma 
  = v_t (\o t, \o x)
  = \psi_t (\o t, \o x), \\\notag
  \gamma 
  := v_0' (\o x)
  = v_x (\o t, \o x)
  = \psi_x (\o t, \o x).
\end{gather}
In particular, if $\psi_x(\o t, \o x) = 0$, then \eqref{pf:sig:phit} is consistent with Definition \ref{d:HJe:VS}. Hence, we may assume that $\gamma \neq 0$. Then, we write
\[
  \underline H_{\rho, \e}[\psi, v](\o t, \o x)
  = \underline H_{1, \e}[\psi, v_0](\o t, \o x)
  = |\gamma| (\underline M_{1, \e}[\psi, v_0](\o t, \o x) + U'(\o x)).
\]

We bound separately the components of the right-hand side from above. First,
\begin{align*}
  |\gamma| U'(\o x)
  \leq |v_0' (\o x)| (U'(0) + L_{U'} |\o x|)
  \leq \| v_0' \|_\infty (|U'(0)| + L_{U'} R),
\end{align*}
where $L_{U'} \geq 0$ is the Lipschitz constant of $U'$. Second, we bound the second term of $\gamma \underline M_{1, \e}[\psi, v_0](\o t, \o x)$ in \eqref{uMrhoe} as
\begin{align*}
  |\gamma| \int_{B_1^c} E_{\e, *}[v_0(\o x + z) - v_0(\o x)]  \, V_{\alpha_\e}''(z) \, dz
  &\leq \| v_0' \|_\infty \int_{B_1^c} \Big( 2 \| v_0 \|_\infty + \frac\e2 \Big) \, V_{\alpha_\e}''(z) \, dz \\
  &\leq \| v_0' \|_\infty ( 4 \| v_0 \|_\infty + 1 ) \, |V_{\alpha_\e}'(1)|,
\end{align*}
which we claim to be uniformly bounded in $\e$. Indeed, for $m=2,3$ we have $V_{\alpha_\e}'(1) = \alpha_\e^2 V'(\alpha_\e) \to 0$ as $\e \to 0$, and for $m=1$ we have by the monotonicity of $V'$ that $|V_{\alpha_\e}'(1)| \leq (\alpha^*)^2 |V'(\alpha_*)| < \infty$. Third, we bound the first term of $\underline M_{1, \e}[\psi, v_0](\o t, \o x)$ in \eqref{uMrhoe}. Let $\phi(z) := \psi(\o t, \o x + z) - \psi(\o t, \o x)$. Since $v_0$ has fourth-order wells with respect to $K$, we have
\begin{align*}  
\phi(z) 
\leq v(\o t, \o x + z) - v(\o t, \o x) 
= v_0(\o x + z) - v_0(\o x) 
\leq K [ (z - z_0)^4 - z_0^4 ]
\end{align*}
for all $z \in \R$, where (recall \eqref{x03})
\[ 
  z_0^3 = \frac{- \gamma}{4K}.
\]  
Hence,
\begin{align*}
|\gamma| \bigg( \pv \int_{B_1} E_{\e, *}[\phi(z)]  \, V_{\alpha_\e}''(z) \, dz \bigg)
\leq 4K |z_0^3| \bigg( \pv \int_{B_1} E_{\e, *}[K ( (z - z_0)^4 - z_0^4 )] \, V_{\alpha_\e}''(z) \, dz \bigg),
\end{align*}
which, by Lemma~\ref{l:parabola}, is uniformly bounded in $\e \in (0,1)$ and in $z_0$ for $|z_0| \leq \| v_0' \|_\infty^{1/3} (4K)^{-1/3}$. 

Collecting the bounds above, we obtain
\[
  \underline H_{\rho, \e}[\psi, v](\o t, \o x)
  \leq \sigma = \psi_t (\o t, \o x)
\]
for some constant $\sigma > 0$ which only depends on $R, K, V, U, \|v_0\|_{C^1(\R)}$ and $\alpha_*$ (and $\alpha^*$ if $m=1$). This shows that $v$ is a supersolution of \eqref{HJe}.
\end{proof}

With Lemmas \ref{l:Pn:to:HJe} and \ref{l:barrier} the proof of Theorem \ref{t} is complete, which achieves the main aim of this section. The secondary aim is to show that any solution $(\bx, \bb)$ of \eqref{Pn} can be characterized by \eqref{HJe}. We achieve this aim by proving Proposition \ref{p:Pn:to:HJe:un} below. Indeed, Proposition \ref{p:Pn:to:HJe:un} states that $u_n$ can be characterized uniquely from \eqref{HJe}. Then, the union of the trajectories of $x_i$ (up to the time point when $b_i$ jumps to $0$) is precisely the jump set of $u_n$ in $Q$.

\begin{prop}
\label{p:Pn:to:HJe:un}
Consider the setting of Theorem \ref{t}. Then the viscosity solution $v$ to \eqref{HJe} with $\e = \frac1n$ which satisfies $v^*(0, \cdot) = (u_n^\circ)^*$ and $v_*(0, \cdot) = (u_n^\circ)_*$ is unique.
\end{prop}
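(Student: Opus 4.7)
The strategy is to show that any viscosity solution $v$ of \eqref{HJe} with the prescribed initial semicontinuous envelopes must satisfy $v^* = u_n^*$ and $v_* = u_{n,*}$ on $\o Q$, where $u_n$ is the function \eqref{un:repd} associated to the unique solution of \eqref{Pn} with initial datum $(\bx^\circ, \bb^\circ)_n$. Since Lemma \ref{l:Pn:to:HJe} asserts that $u_n$ is itself a viscosity solution with the prescribed initial data, this yields uniqueness.

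I would bracket $v$ between two sequences of perturbed particle solutions. For $\delta > 0$, set $\bx^{\circ, \delta, \pm} := (x_i^\circ \mp \delta b_i^\circ)_{i=1}^n$ and let $u_n^{\delta, \pm}$ be the function of type \eqref{un:repd} arising from the unique solution of \eqref{Pn} with initial data $(\bx^{\circ, \delta, \pm}, \bb^\circ)$; Lemma \ref{l:Pn:to:HJe} guarantees that both $u_n^{\delta, \pm}$ are viscosity solutions of \eqref{HJe}. The piecewise constant structure of $u_n^\circ$ immediately gives that shifting positive-sign jumps to the left and negative-sign jumps to the right produces a larger profile pointwise; inspecting lower/upper envelopes at jump points yields
\[
  (u_n^{\delta, +})_*(0, \cdot) \geq (u_n^\circ)^* = v^*(0, \cdot)
  \qand
  (u_n^{\delta, -})^*(0, \cdot) \leq (u_n^\circ)_* = v_*(0, \cdot).
\]
The far-field condition \eqref{CPe:far-field} is easy to verify: outside a large enough ball (depending on $T, \delta$) all four functions equal the same constant $\tfrac1n \sum_i b_i^\circ$. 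Applying Theorem \ref{t:CPe} gives $v^* \leq (u_n^{\delta, +})_*$ and $v_* \geq (u_n^{\delta, -})^*$ on $Q$.

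Next, I let $\delta \to 0$. By the stability property Theorem \ref{t:Pn}\ref{t:Pn:stab}, in each inter-collision interval the perturbed trajectories converge uniformly (up to a $\delta$-dependent relabeling) to the unperturbed ones. Fix $(t, x) \in Q$ with $t$ not a collision time. If $x$ lies away from all active trajectories $\{x_i(t) : b_i(t-) \neq 0\}$, then $u_n^{\delta, \pm}(t, x) = u_n(t, x)$ for all $\delta$ sufficiently small. If $x = x_i(t)$ with $b_i(t-) \neq 0$, the shift $-\delta b_i^\circ$ places the perturbed jump on the side of $x_i(t)$ prescribed by $b_i$, so that for $\delta$ small $u_n^{\delta, +}(t, x_i(t))$ equals the post-jump value of $u_n(t, \cdot)$ at $x_i(t)$, namely $u_n^*(t, x_i(t))$, and symmetrically $u_n^{\delta, -}(t, x_i(t)) = u_{n,*}(t, x_i(t))$. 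Taking envelopes and passing $\delta \to 0$ gives $v^* \leq u_n^*$ and $v_* \geq u_{n,*}$ on the complement of the (finite) set of collision times; upper/lower semicontinuity of $v^*, v_*$ together with the regularity near collisions provided by Theorem \ref{t:Pn}\ref{t:Pn:C12}, \ref{t:Pn:LB:col} extends these to all of $\o Q$.

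For the reverse inequalities, I observe that on the complement of the trajectories and collision points one has $u_{n,*} = u_n^* = u_n$, so the chain $u_{n,*} \leq v_* \leq v \leq v^* \leq u_n^*$ forces $v = v^* = v_* = u_n$ there. At a non-collision trajectory point $(t, x_i(t))$, upper semicontinuity of $v^*$ and the preceding identity give
\[
  v^*(t, x_i(t)) \geq \lim_{\epsilon \downarrow 0} v^*\bigl(t, x_i(t) + \epsilon b_i(t)\bigr) = \lim_{\epsilon \downarrow 0} u_n\bigl(t, x_i(t) + \epsilon b_i(t)\bigr) = u_n^*(t, x_i(t)),
\]
and a symmetric argument with $v_*$ on the pre-jump side yields $v_*(t, x_i(t)) \leq u_{n,*}(t, x_i(t))$. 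The main obstacle is the bookkeeping of pre/post-jump sides and the treatment of collision points and $t = 0$, where Theorem \ref{t:Pn}\ref{t:Pn:C12}--\ref{t:Pn:LB:col} are used once more to approach the critical points along non-trajectory sequences on which $v$ and $u_n$ are known to coincide.
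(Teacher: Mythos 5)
Your strategy is the one the paper intends: bracket $v$ between the particle solutions generated by the charge-wise shifted initial data $(x_i^\circ \mp \delta b_i^\circ)_i$, use Lemma \ref{l:Pn:to:HJe} to make these admissible in the comparison principle for \eqref{HJe}, and send $\delta \to 0$ via the stability property Theorem \ref{t:Pn}\ref{t:Pn:stab} --- exactly the argument of \cite[Proposition 4.5]{VanMeursPeletierPozar22} to which the paper's one-line proof defers, with stability named there as the key ingredient. One small omission: you need $\delta$ smaller than half the minimal gap between neighbouring charged particles so that $(\bx^{\circ,\delta,\pm},\bb^\circ)$ remains in $\cZ_n$ and no shifted jump lands on an unshifted one; otherwise the claimed ordering of the envelopes at $t=0$ can fail at isolated points.

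The one step that does not hold as written is your verification of the far-field hypothesis \eqref{CPe:far-field}. For the particle solutions $u_n^{\delta,\pm}$ it is true that they equal the constants $0$ and $\tfrac1n\sum_i b_i^\circ$ outside a ball depending on $T,\delta$, since the trajectories are bounded on $[0,T]$ and the net charge is conserved. But $v$ is an arbitrary bounded viscosity solution: the hypothesis only prescribes $v^*(0,\cdot)$ and $v_*(0,\cdot)$, which are constant outside a compact set at time zero, and nothing in Definition \ref{d:HJe:VS} propagates this to $t>0$. So the assertion that ``all four functions equal the same constant'' far out is unjustified for $v$, and without \eqref{CPe:far-field} Theorem \ref{t:CPe} cannot be invoked. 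You must either restrict the uniqueness class to solutions satisfying \eqref{CPe:far-field} relative to the particle solutions (which is how the comparison principle is meant to be applied, and is implicit in the cited proof), or supply an additional barrier argument controlling $v$ near spatial infinity for positive times. The remainder of your argument --- the $\delta\to 0$ limit via stability, the identification of envelopes on and off the trajectories, and the squeeze $u_{n,*}\le v_*\le v^*\le u_n^*$ yielding the reverse inequalities --- is sound; note that for the limit $\limsup_{\delta\to 0}(u_n^{\delta,+})_*\le u_n^*$ you do not actually need to track on which side of $x_i(t)$ the perturbed jump sits, only the locally uniform convergence of the trajectories on inter-collision intervals together with upper semicontinuity.
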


\begin{proof}
The proof of \cite[Proposition 4.5]{VanMeursPeletierPozar22} applies with obvious modifications. The key ingredient of that proof is stability of \eqref{Pn} with respect to the initial data, which we establish in this paper in Theorem \ref{t:Pn}\ref{t:Pn:stab}.
\end{proof}

\newpage

\section{Discussion} 
\label{s:disc}
 
In this section we reflect on the setting, assumptions and statements in the main Theorems \ref{t:Pn} and \ref{t}. We also demonstrate the extent to which these theorems cover the examples of $V$ from the applications mentioned in Section \ref{s:intro:lit}.

\subsection{Weaker assumption on $V$} 
\label{s:disc:weak:V}
Our main results in Theorems \ref{t:Pn}, \ref{t:conv} and \ref{t} hold under weaker assumptions on $V$ than those stated in Assumptions \ref{a:UV:fg} and \ref{a:UV}. In this section we discuss how these assumptions can be weakened. Under these weaker assumptions all proofs in this paper remain valid with minor modifications.

We start with Theorem \ref{t:Pn} on the well-posedness and properties of \eqref{Pn}, currently stated under Assumption \ref{a:UV:fg}. Assumption \ref{a:UV:fg} can be relaxed as follows:
\begin{itemize}
  \item for the regularity, $V \in W_{\loc}^{3,1}(0,\infty)$ is sufficient,
  \item the monotonicity assumption only needs to hold locally around $x=0$, provided that $V'' \in L^\infty(1,\infty)$.
\end{itemize}
Local monotonicity is enough, because the difficulty with \eqref{Pn} is to control $x_i$ prior to collisions. To obtain this control it is sufficient to bound the interactions with particles further away by a constant. However, we need to keep $V'' \in L^\infty(1,\infty)$ to prevent particles from diverging to $\infty$ in finite time. 
In addition, Assumption \ref{a:UV:fg} imposes an upper bound on the singularity. This is needed for several estimates in Theorem \ref{t:Pn}, but not for Corollary \ref{c:Pn} or for the existence and uniqueness of solutions.

Next we treat Theorem \ref{t:conv} on the convergence of the Hamilton-Jacobi equations, currently stated under Assumption \ref{a:UV}. This theorem separates the three scaling regimes of $\alpha_\e$ labelled by $m = 1,2,3$. As mentioned in the introduction, in the case $m=1$ the tail of $V$ is expected to be of no importance, while in the case $m=3$ the singularity of $V$ only matters prior to collision. Hence, we weaken Assumption \ref{a:UV} depending on $m$. For $m=1$, Assumption \ref{a:UV} can be relaxed as follows:
\begin{itemize}
  \item for the regularity, $V \in C^3((0,\infty))$ is sufficient;
  \item no condition on $V^{(k)}(x)$ as $x \to \infty$ has to be imposed;
  \item instead of \ref{a:UV:int}, it is enough to have $x \mapsto x^3 \sup_{x < y < 1} |V'''(y)|$ in $L^1(0,1)$.
\end{itemize}
Then, as a consequence, in Lemma \ref{l:parabola} the constant $C$ may also depend on $\sup_{0 < \e < 1} \alpha_\e$.
For $m=2$, Assumption \ref{a:UV} can be relaxed as follows:
\begin{itemize}
  \item for the regularity, $V \in C^3((0,\infty))$ is sufficient;
  \item $V^{(k)}(x) \to 0$ as $x \to \infty$ only for $k = 0,1,2$;
  \item instead of \ref{a:UV:int}, it is enough to have $x \mapsto x^3 \V_3(x)$ in $L^1(0,\infty)$ and $x^4 V'''(x) \to 0$ as $x \to 0$.
\end{itemize} 
For $m=3$, Assumption \ref{a:UV} can be relaxed as follows:
\begin{itemize}
  \item for the regularity, $V \in C^4((0,\infty))$ is sufficient;
  \item $V^{(k)}(x) \to 0$ as $x \to \infty$ only for $k = 0,\ldots,3$;
  \item instead of \ref{a:UV:int}, it is enough to have 
  \begin{itemize}
    \item[$\bullet$] $x \mapsto x^3 \V_3(x)$ in $L^1(1,\infty)$, 
    \item[$\bullet$] $x \mapsto x^2 V''(x) \in L^1(0,1)$, and 
    \item[$\bullet$] $\sup_{x > 1} x^{k+1} |V^{(k)}(x)| < \infty$ for $k = 3,4$.
  \end{itemize}  
\end{itemize} 

Finally, Theorem \ref{t} simply holds under the combination of the weakened assumptions on Theorems \ref{t:Pn} and \ref{t:conv}.

\subsection{The restricted class of test functions} 
\label{s:disc:tFct} 

First, we argue that a restriction of the class of test functions in the definition of the viscosity solutions of \eqref{HJe} (see Definition \ref{d:HJe:VS}) is necessary for the existence of the solution $u_n$ of \eqref{HJe} with $\e = \frac1n$ as constructed in Theorem \ref{t}. Suppose that we employ the usual definition of viscosity solutions in which the space of test functions is not restricted. For simplicity, consider $n=2$, $U=0$ and the Riesz potential $V(x) = |x|^{-a}$ with $a > 0$. Consider the sketch of the solution in Figure \ref{fig:trajs} to the left of the $t$-axis. In this figure, $u_n$ equals $-\frac1n$ in the region bounded by the trajectories and $0$ outside. Take $(\o t, \o x)$ as the collision point and consider the test function $\phi(t,x) = C|x - \o x|^2 + c (t - \o t)$ for some constants $C, c > 0$. Note that $\phi (\o t, \o x) = 0 = u_n^*(\o t, \o x) $ and that $\o \phi_x (\o t, \o x) = 0$. It follows from the solution in Example \ref{ex:Pn=2:expl} that there exist $C, \frac1c > 0$ such that $\phi \geq u_n^*$. For such  $C, \frac1c > 0$ the function $\phi$ is an admissible test function in the usual definition. However, $\o \phi_t (\o t, \o x) > 0$, which contradicts that $u_n^*$ is a subsolution. 

This conclusion is not restricted to the case in which $n=2$, $U=0$ and $V$ is $a$-homogeneous; by Theorem \ref{t:Pn}\ref{t:Pn:LB:col} the same argument applies to two-particle collisions for any $n$, any $U$, and any $V$ with $x V''(x) \geq c x^{1+a}$ for all $x \in (0, \e_0)$ for some $a,c,\e_0 > 0$. Thus, we conclude that it is necessary to remove a certain class of quadratic test functions from the definition of viscosity solutions in order for $u_n$ to be a solution. 

Next we motivate that taking a very small class of test functions, namely those of the form $\phi(t,x) = \theta |x - \o x|^4 + g(t)$, is not restrictive for the class of potentials $V$ described in Section \ref{s:disc:weak:V}. Having fewer test functions makes it easier to prove the convergence of solutions (see Theorem \ref{t:conv}) and harder to prove a comparison principle (see Theorems \ref{t:CPe}, \ref{t:CP:HJ1} and \ref{t:CP:HJk}). Yet, our comparison principles hold under even weaker assumptions on $V$ than those in Section \ref{s:disc:weak:V}.

Finally, we address the choice for the power $4$ in the term $\theta |x - \o x|^4$. We simply took the smallest even integer greater than $2$; larger even numbers would work as well. The choice of this power becomes important only when considering potentials $V$ which are not integrable around $0$. For instance, consider the example above in which $u_n$ is not a viscosity solution in the usual definition. If we take $a \geq 2$ (see Example \ref{ex:Pn=2:expl}), then we can choose $C, c$ in $\phi(t,x) = C|x - \o x|^4 + c (t - \o t)$ to demonstrates that $u_n^*$ is not a subsolution in the sense of Definition \ref{d:HJe:VS}. However, the case $a \geq 2$ corresponds to potentials $V$ which are not integrable at $0$, and those potentials are excluded in Theorem \ref{t} for independent reasons. If one is nonetheless interested in considering \eqref{HJe} for potentials $V$ which are not integrable at $0$, then one should use a higher power than $4$, or use a function which is even flatter around $x = \o x$.

\subsection{Comparison between \eqref{Pn} and \eqref{HJe}}
\label{s:disc:HJe} 

In Section \ref{s:t} we have shown that \eqref{HJe} can characterize all solutions of \eqref{Pn}. 
Interestingly, the comparison principle in Theorem \ref{t:CPe} for \eqref{HJe} holds under a different set of assumptions than \eqref{Pn} (see Section \ref{s:disc:weak:V}). These assumptions are that $V$ is even, that $V|_{(0,\infty)} \in W_{\loc}^{2,1}(0,\infty)$ is convex and that $V'' \in L^1(1,\infty) \cap L^\infty(1,\infty)$. In comparison to the assumptions on \eqref{Pn}, we need less regularity, no sign restriction on $V'''$ and no lower bound on the singularity. However, we do need global convexity and integrability of the tails of $V''$.

Other than a different class of potentials $V$, we remark that \eqref{HJe} has the potential to describe a system of infinitely many particles (e.g.\ $u^\circ(x) = \sin x$). To prove that \eqref{HJe} admits a unique solution for such initial data with the techniques in this paper, two obstacles need to be overcome. First, the comparison principle requires the condition \eqref{CPe:far-field} on the sub- and supersolutions, which is not obviously met if the set of initial particle positions $\{ x \in \R \mid u^\circ(x) \in \e \Z \}$ is unbounded. Second, this paper provides no existence of solutions of \eqref{HJe} except for those built from solutions of \eqref{Pn}.

\subsection{Examples of $V$} In Section \ref{s:intro:lit} we mentioned several examples of $V$. Here we check whether they satisfy Assumptions \ref{a:UV:fg} and \ref{a:UV}, i.e.\ the assumptions under which all main results in this paper hold. 

First, we consider $V$ as given by \eqref{Vwall} which describes the setting of dislocation walls. In Appendix \ref{a:Vwall} we show that it satisfies all assumptions. 

Second, we consider the Riesz potential $V$ given by \eqref{VR} with parameter $a \in (-1,\infty)$. For $a \in (-1,1) \cup \{2\}$ we mentioned applications to dislocation dynamics. Assumption  \ref{a:UV:fg} holds for all $a \in (-1,\infty)$, and thus Theorem \ref{t:Pn} on \eqref{Pn} holds. However, $V$ does not satisfy Assumption \ref{a:UV} because it is not integrable on $\R$. This is not surprising; the scaling in \eqref{Val} assumes integrability. Yet, for $m=1$, this scaling is not important. Moreover, for $m=1$ the potential $V$ satisfies the weakened assumptions mentioned in Section \ref{s:disc:weak:V} for $|a| < 1$. Hence, for $m=1$, Theorem \ref{t} applies to $V$ for any $|a| < 1$. This only leaves out the case $a = 2$, which corresponds to dislocation dipoles.

\section*{Acknowledgements}

The author was supported by JSPS KAKENHI Grant Number JP20K14358. The author expresses his gratitude to Norbert Po\v{z}\'ar for fruitful discussions. 

\appendix

\section{The dislocation wall potential fits to Theorem \ref{t}}
\label{a:Vwall}

Here we show that $V$ as given by \eqref{Vwall} satisfies Assumptions \ref{a:UV:fg} and \ref{a:UV}. Evenness is obvious. By writing
\begin{align*} 
  V(x) = \frac x{\sinh x} \cosh x + \log \frac x{2\sinh x} - \log|x|
\end{align*}
and noting that the first two terms in the right-hand side are analytic around $0$, the assumptions on the regularity and the singularity follow. To prove the monotonicity, we compute for any $x > 0$
\begin{align} \label{pf:uu}
  V(x) &= \frac{2x}{e^{2x} - 1} - \log (1 - e^{-2x}) > 0, \\ \notag
  V'(x) &= \frac{-x}{\sinh^2 x} < 0, \\\notag
  V''(x) &= \frac{2x \cosh x - \sinh x }{\sinh^3 x} > 0, \\\notag
  V'''(x) &= \frac{4 (\sinh x - x \cosh x)\cosh x - 2 x }{\sinh^4 x} < 0, 
\end{align}
where the inequalities follow from the identity $x \cosh x > \sinh x$. 

To prove the remaining Assumption \ref{a:UV}\ref{a:UV:to0},\ref{a:UV:int}, it is sufficient to show that $V^{(k)}$ has exponentially decaying tails for all $k \geq 0$. For $k=0$ this is obvious from \eqref{pf:uu}. For $k \geq 1$ we obtain by induction that
\[
  V^{(k)}(x) = \frac{ x p_{k-1} (\cosh x, \sinh x) + q_{k-1} (\cosh x, \sinh x) }{\sinh^{k+1} x}
  \qquad \text{for all } k \geq 1
\]
for some $(k-1)$-homogeneous polynomials $p_{k-1}$, $q_{k-1}$ of two variables. Hence, $V^{(k)}$ has exponentially decaying tails.


\begin{thebibliography}{GvMPS20}

\bibitem[Awa91]{Sayah91}
S.~Awatif.
\newblock Equations d'{H}amilton-{J}acobi du premier ordre avec termes
  int{\'e}gro-diff{\'e}rentiels.
\newblock {\em Communications in Partial Differential Equations},
  16(6-7):1057--1074, 1991.

\bibitem[BBP17]{BerendsenBurgerPietschmann17}
J.~Berendsen, M.~Burger, and J.~Pietschmann.
\newblock On a cross-diffusion model for multiple species with nonlocal
  interaction and size exclusion.
\newblock {\em Nonlinear Analysis}, 159:10--39, 2017.

\bibitem[BG04]{BraidesGelli04}
A.~Braides and M.~S. Gelli.
\newblock The passage from discrete to continuous variational problems: a
  nonlinear homogenization process.
\newblock In {\em Nonlinear Homogenization and its Applications to Composites,
  Polycrystals and Smart Materials}, pages 45--63. Springer, 2004.

\bibitem[BHS19]{BorodachovHardinSaff19}
S.\~V. Borodachov, D.\~P. Hardin, and E.\~B. Saff.
\newblock {\em Discrete energy on rectifiable sets}.
\newblock Springer Monographs in Mathematics, Springer, New York, 2019.

\bibitem[BKM10]{BilerKarchMonneau10}
P.~Biler, G.~Karch, and R.~Monneau.
\newblock Nonlinear diffusion of dislocation density and self-similar
  solutions.
\newblock {\em Communications in Mathematical Physics}, 294(1):145--168, 2010.

\bibitem[BLBL07]{BlancLeBrisLions07}
X.~Blanc, C.~Le~Bris, and P.-L. Lions.
\newblock Atomistic to continuum limits for computational materials science.
\newblock {\em ESAIM: Mathematical Modelling and Numerical Analysis},
  41(2):391--426, 2007.

\bibitem[CIL92]{CrandallIshiiLions92}
M.~G. Crandall, H.~Ishii, and P.-L. Lions.
\newblock User's guide to viscosity solutions of second order partial
  differential equations.
\newblock {\em Bulletin of the American mathematical society}, 27(1):1--67,
  1992.

\bibitem[CXZ16]{ChapmanXiangZhu15}
S.~J. Chapman, Y.~Xiang, and Y.~Zhu.
\newblock Homogenization of a row of dislocation dipoles from discrete
  dislocation dynamics.
\newblock {\em SIAM Journal on Applied Mathematics}, 76(2):750--775, 2016.

\bibitem[DFES20]{DiFrancescoEspositoSchmidtchen20ArXiv}
M.~Di~Francesco, A.~Esposito, and M.~Schmidtchen.
\newblock Many-particle limit for a system of interaction equations driven by
  {N}ewtonian potentials.
\newblock {\em ArXiv: 2008.11106}, 2020.

\bibitem[DFF13]{DiFrancescoFagioli13}
M.~Di~Francesco and S.~Fagioli.
\newblock Measure solutions for non-local interaction pdes with two species.
\newblock {\em Nonlinearity}, 26(10):2777--2808, 2013.

\bibitem[DFF16]{DiFrancescoFagioli16}
M.~Di~Francesco and S.~Fagioli.
\newblock A nonlocal swarm model for predators--prey interactions.
\newblock {\em Mathematical Models and Methods in Applied Sciences},
  26(02):319--355, 2016.

\bibitem[dJvM23]{deJongVanMeurs23ArXiv}
T.\~G. de~Jong and P.~{v}an Meurs.
\newblock H\"older regularity for the trajectories of generalized charged
  particles in 1{D}.
\newblock {\em ArXiv: 2311.08739}, 2023.

\bibitem[DRAW02]{DauxoisRuffoArimondoWilkens02}
T.~Dauxois, S.~Ruffo, E.~Arimondo, and M.~Wilkens.
\newblock Dynamics and thermodynamics of systems with long-range interactions:
  {A}n introduction.
\newblock In {\em Dynamics and thermodynamics of systems with long-range
  interactions (Les Houches, 2002), volume 602 of Lecture Notes in Phys.},
  pages 1--19. Springer, Berlin, 2002.

\bibitem[DS20]{DuerinckxSerfaty20}
M.~Duerinckx and S.~Serfaty.
\newblock Mean field limit for {C}oulomb-type flows.
\newblock {\em Duke Mathematical Journal}, 169(15):2887--2935, 2020.

\bibitem[Due16]{Duerinckx16}
M.~Duerinckx.
\newblock Mean-field limits for some {R}iesz interaction gradient flows.
\newblock {\em SIAM Journal on Mathematical Analysis}, 48(3):2269--2300, 2016.

\bibitem[FIM09]{ForcadelImbertMonneau09}
N.~Forcadel, C.~Imbert, and R.~Monneau.
\newblock Homogenization of the dislocation dynamics and of some particle
  systems with two-body interactions.
\newblock {\em Discrete and Continuous Dynamical Systems A}, 23(3):785--826,
  2009.

\bibitem[GGIS91]{GigaGotoIshiiSato91}
Y.~Giga, S.~Goto, H.~Ishii, and M.-H. Sato.
\newblock Comparison principle and convexity preserving properties for singular
  degenerate parabolic equations on unbounded domains.
\newblock {\em Indiana University Mathematics Journal}, pages 443--470, 1991.

\bibitem[GPPS13]{GeersPeerlingsPeletierScardia13}
M.~G.~D. Geers, R.~H.~J. Peerlings, M.~A. Peletier, and L.~Scardia.
\newblock Asymptotic behaviour of a pile-up of infinite walls of edge
  dislocations.
\newblock {\em Archive for Rational Mechanics and Analysis}, 209:495--539,
  2013.

\bibitem[GvMPS20]{GarroniVanMeursPeletierScardia19}
A.~Garroni, P.~{v}an Meurs, M.~A. Peletier, and L.~Scardia.
\newblock Convergence and non-convergence of many-particle evolutions with
  multiple signs.
\newblock {\em Archive for Rational Mechanics and Analysis}, 235(1):3--49,
  2020.

\bibitem[HB11]{HullBacon11}
D.~Hull and D.~J. Bacon.
\newblock {\em Introduction to Dislocations}, volume~37.
\newblock Elsevier, Oxford, 2011.

\bibitem[HCO10]{HallChapmanOckendon10}
C.~L. Hall, S.~J. Chapman, and J.~R. Ockendon.
\newblock Asymptotic analysis of a system of algebraic equations arising in
  dislocation theory.
\newblock {\em SIAM Journal on Applied Mathematics}, 70(7):2729--2749, 2010.

\bibitem[HHvM18]{HallHudsonVanMeurs18}
C.~L. Hall, T.~Hudson, and P.~van Meurs.
\newblock Asymptotic analysis of boundary layers in a repulsive particle
  system.
\newblock {\em Acta Applicandae Mathematicae}, 153(1):1--54, 2018.

\bibitem[HL55]{HeadLouat55}
A.~K. Head and N.~Louat.
\newblock The distribution of dislocations in linear arrays.
\newblock {\em Australian Journal of Physics}, 8(1):1--7, 1955.

\bibitem[HL82]{HirthLothe82}
J.~P. Hirth and J.~Lothe.
\newblock {\em Theory of Dislocations}.
\newblock John Wiley \& Sons, New York, 1982.

\bibitem[HO14]{HudsonOrtner14}
T.~Hudson and C.~Ortner.
\newblock Existence and stability of a screw dislocation under anti-plane
  deformation.
\newblock {\em Archive for Rational Mechanics and Analysis}, 213(3):887--929,
  2014.

\bibitem[Hud13]{Hudson13}
T.~Hudson.
\newblock Gamma-expansion for a 1{D} confined {L}ennard-{J}ones model with
  point defect.
\newblock {\em Networks \& Heterogeneous Media}, 8(2):501--527, 2013.

\bibitem[IMR08]{ImbertMonneauRouy08}
C.~Imbert, R.~Monneau, and E.~Rouy.
\newblock Homogenization of first order equations with
  (u/$\varepsilon$)-periodic {H}amiltonians {P}art {II}: {A}pplication to
  dislocations dynamics.
\newblock {\em Communications in Partial Differential Equations},
  33(3):479--516, 2008.

\bibitem[NRS22]{NguyenRosenzweigSerfaty21}
Q.\~H. Nguyen, M.~Rosenzweig, and S.~Serfaty.
\newblock Mean-field limits of {R}iesz-type singular flows.
\newblock {\em Ars Inveniendi Analytica}, 4:1--45, 2022.

\bibitem[Oel90]{Oelschlager90}
K.~Oelschl{\"a}ger.
\newblock Large systems of interacting particles and the porous medium
  equation.
\newblock {\em Journal of Differential Equations}, 88(2):294--346, 1990.

\bibitem[PS23]{PatriziSangsawang23}
S.~Patrizi and T.~Sangsawang.
\newblock Derivation of the 1-{D} {G}roma--{B}alogh equations from the
  {P}eierls--{N}abarro model.
\newblock {\em Calculus of Variations and Partial Differential Equations},
  62(9):242, 2023.

\bibitem[PV15]{PatriziValdinoci15}
S.~Patrizi and E.~Valdinoci.
\newblock Crystal dislocations with different orientations and collisions.
\newblock {\em Archive for Rational Mechanics and Analysis}, 217(1):231--261,
  2015.

\bibitem[PV16]{PatriziValdinoci16}
S.~Patrizi and E.~Valdinoci.
\newblock Relaxation times for atom dislocations in crystals.
\newblock {\em Calculus of Variations and Partial Differential Equations},
  55(3):Art. 71, 44, 2016.

\bibitem[PV17]{PatriziValdinoci17}
S.~Patrizi and E.~Valdinoci.
\newblock Long-time behavior for crystal dislocation dynamics.
\newblock {\em Mathematical Models and Methods in Applied Sciences},
  27(12):2185--2228, 2017.

\bibitem[RPGK08]{RoyPeerlingsGeersKasyanyuk08}
A.~Roy, R.~H.~J. Peerlings, M.~G.~D. Geers, and Y.~Kasyanyuk.
\newblock Continuum modeling of dislocation interactions: {W}hy discreteness
  matters?
\newblock {\em Materials Science and Engineering: A}, 486:653--661, 2008.

\bibitem[SBO07]{SmetsBethuelOrlandi07}
D.~Smets, F.~Bethuel, and G.~Orlandi.
\newblock Quantization and motion law for {G}inzburg--{L}andau vortices.
\newblock {\em Archive for Rational Mechanics and Analysis}, 183(2):315--370,
  2007.

\bibitem[Ser07]{Serfaty07II}
S.~Serfaty.
\newblock Vortex collisions and energy-dissipation rates in the
  {G}inzburg--{L}andau heat flow. {P}art ii: {T}he dynamics.
\newblock {\em Journal of the European Mathematical Society}, 9(3):383--426,
  2007.

\bibitem[SS18]{SchaeffnerSchloemerkemper18}
M.~Sch{\"a}ffner and A.~Schl{\"o}merkemper.
\newblock On {L}ennard-{J}ones systems with finite range interactions and their
  asymptotic analysis.
\newblock {\em Networks and Heterogeneous Media}, 13(1):95--118, 2018.

\bibitem[vM15]{VanMeurs15}
P.~{v}an Meurs.
\newblock {\em Discrete-to-Continuum Limits of Interacting Dislocations}.
\newblock PhD thesis, Eindhoven University of Technology, 2015.

\bibitem[vM18]{vanMeurs18}
P.~van Meurs.
\newblock Many-particle limits and non-convergence of dislocation wall
  pile-ups.
\newblock {\em Nonlinearity}, 31:165--225, 2018.

\bibitem[vMM14]{VanMeursMuntean14}
P.~{v}an Meurs and A.~Muntean.
\newblock Upscaling of the dynamics of dislocation walls.
\newblock {\em Advances in Mathematical Sciences and Applications},
  24(2):401--414, 2014.

\bibitem[vMM19]{VanMeursMorandotti19}
P.~van Meurs and M.~Morandotti.
\newblock Discrete-to-continuum limits of particles with an annihilation rule.
\newblock {\em SIAM Journal on Applied Mathematics}, 79(5):1940--1966, 2019.

\bibitem[vMP22]{VanMeursPatrizi22ArXiv}
P.~van Meurs and S.~Patrizi.
\newblock Discrete dislocations dynamics with annihilation as the limit of the
  {P}eierls-{N}abarro model in one dimension.
\newblock {\em ArXiv: 2209.06709}, 2022.

\bibitem[vMPP22]{VanMeursPeletierPozar22}
P.~{v}an Meurs, M.~A. Peletier, and N.~Po{\v{z}}{\'a}r.
\newblock Discrete-to-continuum convergence of charged particles in 1{D} with
  annihilation.
\newblock {\em Archive for Rational Mechanics and Analysis}, 246(1):241--297,
  2022.

\bibitem[vMT23]{VanMeursTanaka23}
P.~van Meurs and K.~Tanaka.
\newblock Convergence rates for energies of interacting particles whose
  distribution spreads out as their number increases.
\newblock {\em ESAIM: Control, Optimisation and Calculus of Variations}, 29:4,
  2023.

\bibitem[Zin16]{Zinsl16}
J.~Zinsl.
\newblock Geodesically convex energies and confinement of solutions for a
  multi-component system of nonlocal interaction equations.
\newblock {\em Nonlinear Differential Equations and Applications NoDEA},
  23(4):1--43, 2016.

\end{thebibliography}
\end{document}